\setlist{leftmargin=*}
\def\@tocline#1#2#3#4#5#6#7{\relax
  \ifnum #1>\c@tocdepth 
  \else
    \par \addpenalty\@secpenalty\addvspace{#2}%
    \begingroup \hyphenpenalty\@M
    \@ifempty{#4}{%
      \@tempdima\csname r@tocindent\number#1\endcsname\relax
    }{%
      \@tempdima#4\relax
    }%
    \parindent\z@ \leftskip#3\relax \advance\leftskip\@tempdima\relax
    \rightskip\@pnumwidth plus4em \parfillskip-\@pnumwidth
    #5\leavevmode\hskip-\@tempdima
      \ifcase #1
       \or\or \hskip 1em \or \hskip 2em \else \hskip 3em \fi%
      #6\nobreak\relax
    \dotfill\hbox to\@pnumwidth{\@tocpagenum{#7}}\par
    \nobreak
    \endgroup
  \fi}
\newtheorem{thm}{Theorem}[section]
\newtheorem{cor}[thm]{Corollary}
\newtheorem{lem}[thm]{Lemma}
\newtheorem{prop}[thm]{Proposition}
\newtheorem{claim}[thm]{Claim}
\newtheorem{fact}[thm]{Fact}
\theoremstyle{definition}
\newtheorem{defn}[thm]{Definition}
\newtheorem{rem}[thm]{Remark}
\newtheorem{sample}[thm]{Example}
\numberwithin{equation}{section}
\newcommand{\RR}{\mathbb{R}}
\newcommand{\CC}{\mathbb{C}}
\newcommand{\CP}{\mathcal{P}}
\gdef\g{\mathfrak{g}}
\gdef\h{\mathfrak{h}}
\newcommand{\QQ}{\mathbb{Q}}
\def\CF{\mathcal F}
\newcommand{\ZZ}{\mathbb{Z}}
\newcommand{\CR}{\mathcal R} 
\newcommand{\co}{\circ}
\newcommand{\la}{\langle}
\newcommand{\ra}{\rangle}
\def\tp{{\mathrm{tp}}}
\gdef\CL{\mathcal{L}}
\gdef\st{\operatorname{st}}
\gdef\cl{\operatorname{cl}}
\def\CA{\mathcal A}
\def\CO{\mathcal O}
\def\Ran{\mathbb{R}_{\mathrm{an}}}
\gdef\CX{\mathcal{X}}
\gdef\Stab{{\mathrm{Stab}}}
\gdef\UT{\operatorname{UT}}
\gdef\ut{\operatorname{\mathfrak{ut}}}
\gdef\Rom{\RR_\mathrm{om}}
\title{O-minimal flows on nilmanifolds}
\author[Y.Peterzil]{Ya'acov Peterzil}
\address{University of Haifa}
\email{kobi@math.haifa.ac.il}
\author[S.Starchenko]{Sergei Starchenko}
\address{University of Notre Dame}
\email{sstarche@nd.edu}
\begin{document}

\thanks{Both authors thank Institute Henri Poincare in
Paris, for its hospitality during the ``Model theory, Combinatorics and valued
fields'' term in the Spring trimester of 2018. They also thank the mathematical institute CIRM in Trento
  for its hospitality in Summer of 2019.}
\thanks{The first author was partially supported by ISF grant 290/19.}
\thanks{The second author was
partially supported by the NSF research grant DMS-1500671.}

\begin{abstract} Let $G$ be a connected, simply connected nilpotent Lie group, identified with
a real algebraic subgroup of $\UT(n,\RR)$, and let $\Gamma$ be a lattice in $G$,
with $\pi:G\to G/\Gamma$ the quotient map. For a semi-algebraic $X\subseteq G$, and more
generally a definable set in an o-minimal structure on the real field, we consider
the topological closure of $\pi(X)$ in the compact nilmanifold $G/\Gamma$.

 Our theorem describes $\cl(\pi(X))$ in terms of finitely many
families of cosets of real algebraic subgroups of $G$. The underlying families are
extracted from $X$, independently of $\Gamma$. We also prove an
equidistribution result in the case of curves.
\end{abstract}

\maketitle
\tableofcontents

\section{Introduction}
 Let $\UT(n,\RR)$ denote the group of real $n\times n$ upper triangular
matrices with $1$ on the diagonal. Below we say that a group $G$ is \emph{a real
unipotent group} if it is a real algebraic subgroup of $\UT(n,\RR)$, namely a
subgroup of matrices which is a solution set to a system of real polynomials in the
matrix coordinates. Such subgroups are exactly the connected Lie subgroups of
$\UT(n,\RR)$, and every connected, simply connected nilpotent Lie group is Lie
isomorphic to a real unipotent group. For $\Gamma$ a discrete co-compact subgroup of
real unipotent $G$, the compact manifold $G/\Gamma$ is called \emph{a compact
nilmanifold}. We let $\pi:G\to G/\Gamma$ be the map $\pi(g)=g\Gamma$.

 Let $\Rom$ be an o-minimal
expansion of the real field and $G$ a real unipotent group. We consider the
following problem:

\medskip
\emph{Given $X\subseteq G$ an $\Rom$-definable set (e.g. $X\subseteq G$ a semi-algebraic set),
what is the topological closure of $\pi(X)$ in the nilmanifold $G/\Gamma$?}
\medskip

A special case of this problem is when the set $X\subseteq G$ is the image of $\RR^d$
under a polynomial map (with $G$ viewed in an obvious way as a subset of
$\RR^{n^2}$).
 In \cite{Shah} Shah considers a similar question when $G$
 is an arbitrary real
algebraic linear group, and in \cite{Leibman1} Leibman considers a discrete variant
of the problem,  when $X$ is the image of $\ZZ^d$ under certain polynomial maps inside
nilpotent  Lie groups. Both prove results about
 equidistribution
from which theorems about the closure of $\pi(X)$ can be deduced.  Our setting is
more general, but the results we obtain answer mostly the
 closure problem. In Theorem~\ref{Leibman} below and in Section~\ref{Leibman-sec} we
 show how to deduce closure results similar to theirs from our work.

In order to state our main theorem we set some notation: We fix $G$ a real unipotent
group and  $\Rom$ an o-minimal expansion of the real field. Given a lattice $\Gamma$
in $G$, namely a discrete co-compact subgroup of $G$, we denote by
$M^G_\Gamma=G/\Gamma$ the associated compact nilmanifold and by $\pi^G_\Gamma:G\to
M^G_\Gamma$ the quotient map $\pi^G_\Gamma(g)=g\Gamma$. We omit $G$ from the
notation when the context is clear. Given an $\Rom$-definable set $X\subseteq G$, we want
to describe the topological closure of $\pi_\Gamma(X)$  in $M_\Gamma$.

As we shall see, the frontier of $\pi_\Gamma(X)$ is given via families of orbits of
real algebraic subgroups of $G$ in $M_\Gamma$. For that we make use of the following
theorem, which can be viewed as a special case of our problem when $X$ is a real
algebraic subgroup of $G$. For the discrete one-variable case, see Lesigne
\cite{Lesigne}, and for the more general result about closures of orbits of
unipotent groups, see Ratner \cite{Ratner}.
\begin{thm} [\cite{Lesigne},\cite{Ratner}]\label{ratner}
Let $G$ be a real unipotent group. Assume that  $\Gamma$ is a
lattice in $G$. If $H \subseteq G$ is a real algebraic subgroup then there exists a
unique real algebraic group $H_0\supseteq H$ such that
$$\cl(\pi_\Gamma(H))=\pi_\Gamma(H_0).$$

The group $H_0$ is the smallest real algebraic subgroup of $G$ containing $H$ such
that $\Gamma\cap H_0$ is co-compact in $H_0$.
\end{thm}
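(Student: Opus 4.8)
This is essentially a repackaging of Ratner's orbit closure theorem together with standard facts about unipotent groups; here is the plan.

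\emph{Existence of $H_0$.} Since $G\le\UT(n,\RR)$ is unipotent, the real algebraic subgroup $H$ equals $\exp\mathfrak h$ and is generated by the one‑parameter unipotent subgroups $\{\exp(tX):t\in\RR\}$, $X\in\mathfrak h$; in particular $H$ acts on $M_\Gamma=G/\Gamma$ by a unipotent flow. By Ratner's theorem \cite{Ratner} (already \cite{Lesigne} in the one‑dimensional discrete case) the closure of the orbit $H\cdot x_0$ of the base point $x_0=\pi_\Gamma(e)$ is homogeneous: there is a closed, connected subgroup $H_0\subseteq G$ with $H\subseteq H_0$, with $\Gamma\cap H_0$ a lattice in $H_0$, and with
\[
\cl(\pi_\Gamma(H))=\cl(H\cdot x_0)=H_0\cdot x_0=\pi_\Gamma(H_0).
\]
Being a connected Lie subgroup of $\UT(n,\RR)$, the group $H_0$ is itself a real unipotent group (see the introduction). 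Finally $\pi_\Gamma(H_0)=\cl(\pi_\Gamma(H))$ is closed in the compact manifold $M_\Gamma$, hence compact, and is equivariantly homeomorphic to $H_0/(H_0\cap\Gamma)$; so $\Gamma\cap H_0$ is co‑compact in $H_0$.

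\emph{From orbits back to subgroups.} The one technical point is to convert an inclusion of orbits in $M_\Gamma$ into an inclusion of subgroups of $G$. For any real algebraic subgroup $K\subseteq G$ with $\Gamma\cap K$ co‑compact in $K$, the orbit $\pi_\Gamma(K)\cong K/(K\cap\Gamma)$ is compact; since the stabilizer of $x_0$ in $K$ is the \emph{discrete} group $\Gamma\cap K$, the orbit map $K\to K\cdot x_0$ is a covering, so $\pi_\Gamma(K)$ is an injectively immersed — hence, being compact, embedded — submanifold of $M_\Gamma$, with tangent space $(d\pi_\Gamma)_e(\mathfrak k)$ at $x_0$. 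Consequently, if $K_1,K_2$ are two such subgroups with $\pi_\Gamma(K_1)\subseteq\pi_\Gamma(K_2)$, then $\pi_\Gamma(K_1)$ is an embedded submanifold of $\pi_\Gamma(K_2)$, so $(d\pi_\Gamma)_e(\mathfrak k_1)\subseteq(d\pi_\Gamma)_e(\mathfrak k_2)$; as $(d\pi_\Gamma)_e$ is an isomorphism onto $T_{x_0}(G/\Gamma)$ this gives $\mathfrak k_1\subseteq\mathfrak k_2$, whence $K_1\subseteq K_2$ by connectedness.

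\emph{Minimality and uniqueness.} If $L\subseteq G$ is any real algebraic subgroup with $H\subseteq L$ and $\Gamma\cap L$ co‑compact in $L$, then $\pi_\Gamma(L)$ is compact, hence closed in $M_\Gamma$, and contains $\pi_\Gamma(H)$, so it contains $\cl(\pi_\Gamma(H))=\pi_\Gamma(H_0)$; by the previous paragraph $H_0\subseteq L$. Thus $H_0$ is the smallest real algebraic subgroup of $G$ containing $H$ with co‑compact $\Gamma$‑intersection. For uniqueness, if $H_1$ is a real algebraic subgroup with $\cl(\pi_\Gamma(H))=\pi_\Gamma(H_1)$, then (exactly as in the existence step) $\pi_\Gamma(H_1)$ is compact, so $\Gamma\cap H_1$ is co‑compact in $H_1$; applying the previous paragraph to $\pi_\Gamma(H_0)\subseteq\pi_\Gamma(H_1)$ and to $\pi_\Gamma(H_1)\subseteq\pi_\Gamma(H_0)$ yields $H_0=H_1$.

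\emph{Main obstacle.} Essentially all the content is imported from Ratner's theorem — the homogeneity of unipotent orbit closures — which I do not expect to reprove. Within the present argument the only delicate step is the passage from orbits to subgroups, handled above via the embedded‑submanifold/tangent‑space comparison at $x_0$ (which works precisely because the relevant stabilizers are discrete); alternatively one can observe that co‑compactness of $\Gamma\cap L$ in $L$ makes $L$ clopen in the closed set $L\Gamma$, so that the connected set $H_0\subseteq L\Gamma$ must lie in $L$.
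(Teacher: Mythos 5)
The paper does not prove this theorem at all --- it is quoted directly from Lesigne and Ratner --- so your write-up is, if anything, more complete than the paper's treatment. Your derivation of the stated form from Ratner's orbit closure theorem is correct: the homogeneity of $\cl(H\cdot x_0)$, the passage from orbit inclusions back to subgroup inclusions via the discrete-stabilizer/embedded-submanifold argument (or, equivalently, the observation that $L$ is clopen in the closed set $L\Gamma$ by Baire category, so the connected group $H_0\subseteq L\Gamma$ lies in $L$), and the resulting minimality and uniqueness are all sound.
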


Let us set aside a specific notation for the above $H_0$:

\begin{defn} Given $H\subseteq G$  real unipotent groups and $\Gamma$ a lattice
in $G$, we let $H^\Gamma$ denote the smallest real algebraic subgroup of $G$
containing $H$ such that $H^\Gamma\cap \Gamma$ is co-compact in $H^\Gamma$.
\end{defn}

We can now state our main theorem:

\begin{thm}\label{thm-main2.5} Let $G$ be a real unipotent group
 and  let $X\subseteq G$ be an  $\Rom$-definable set. Then,
there are finitely many real algebraic subgroups $L_1,\ldots, L_m\subseteq G$ of positive
dimension, and finitely many $\Rom$-definable closed sets $C_1,\ldots, C_m\subseteq G$,
such that for every lattice $\Gamma\subseteq G$, we have:
$$\cl\Bigl(\pi_\Gamma( X)\Bigr)=\pi_{\Gamma}\Bigl(\cl(X) \cup \bigcup_{i=1}^m C_i L_i^\Gamma\Bigr).$$
In addition, we may choose the sets $C_i$  so that:
\begin{enumerate}
\item For every $i=1,\ldots, m$, $\dim(C_i)<\dim X$.
\item Let $L_i$ be  maximal  with respect to inclusion  among $L_1,\ldots, L_m$.
Then $C_i$ is a bounded subset of $G$, and in particular,
$\pi_\Gamma(C_iL_i^\Gamma)$ is closed in $M_\Gamma$.
\end{enumerate}
\end{thm}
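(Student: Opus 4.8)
The plan is to induct on $\dim X$. The case $\dim X=0$ is immediate: then $X$ is finite, $\cl(X)=X$, and $\cl(\pi_\Gamma(X))=\pi_\Gamma(X)$, so the statement holds with $m=0$. For the inductive step, observe that $\pi_\Gamma(\cl(X))\subseteq\cl(\pi_\Gamma(X))$ and $\pi_\Gamma(X)\subseteq\pi_\Gamma(\cl(X))$ automatically, so it suffices to produce finitely many positive-dimensional real algebraic subgroups $L_i\subseteq G$ and $\Rom$-definable closed sets $C_i\subseteq G$ with $\dim C_i<\dim X$ such that, for every lattice $\Gamma$, (a) $\pi_\Gamma(C_iL_i^\Gamma)\subseteq\cl(\pi_\Gamma(X))$, and (b) the set $\pi_\Gamma(\cl(X))\cup\bigcup_i\pi_\Gamma(C_iL_i^\Gamma)$ is closed in $M_\Gamma$; given (a) and (b), minimality of the closure forces the asserted equality. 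The content lies entirely in the frontier $\cl(\pi_\Gamma(X))\setminus\pi_\Gamma(\cl(X))$, which arises from sequences in $X$ leaving every compact subset of $G$ while staying bounded in the compact quotient — the ``flow'' phenomenon that makes the problem nontrivial.

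To locate this frontier I would pass to a sufficiently saturated elementary extension $\RR'\succ\Rom$ and use the standard part map $\st\colon M_\Gamma(\RR')\to M_\Gamma$ available on the compact definable manifold $M_\Gamma$, so that $\cl(\pi_\Gamma(X))=\st(\pi_\Gamma(X(\RR')))$. The key device is the pair of $\mu$-stabilizers: for $a\in X(\RR')$ one forms, relative to the monad $\mu\le G(\RR')$ of points infinitely close to the identity, the subgroups $\rsmu(a)$ and $\lsmu(a)$; these turn out to be real algebraic subgroups of $G$, and the orbit of $\rsmu(a)$ controls $\st(\pi_\Gamma(a))$. Because the assignment $a\mapsto(\rsmu(a),\lsmu(a))$ is governed by the $\Rom$-definable structure of $X$, a compactness-and-dimension argument shows that only finitely many subgroups occur as $a$ ranges over $X(\RR')$; the positive-dimensional ones among them, after pruning redundancies, are the groups $L_1,\dots,L_m$. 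For each such $L$ the corresponding ``base points'' $c$ — those for which $\st(\pi_\Gamma(a))$ lies in the $L$-orbit through $\pi_\Gamma(c)$, for the $a$ whose stabilizer is $L$ — form (the real points of) an $\Rom$-definable set $C$, and this is precisely where Theorem~\ref{ratner} enters: replacing $L$ by $L^\Gamma$ is what turns $\pi_\Gamma(cL^\Gamma)$ into a closed set and what makes the description uniform in $\Gamma$, since $\cl(\pi_\Gamma(L))=\pi_\Gamma(L^\Gamma)$. A dimension count then gives $\dim C<\dim X$: the $\mu$-stabilizer of a point of $X$ that witnesses escape to infinity absorbs a positive-dimensional direction, so the transversal data recording the ``position'' of the orbit is of strictly smaller dimension.

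With the $(C_i,L_i)$ in hand, inclusion (a) is essentially the construction together with Theorem~\ref{ratner}. The closedness (b) is where I expect the real work. The natural route is a secondary analysis of each $\cl(\pi_\Gamma(C_iL_i^\Gamma))$: since $C_iL_i^\Gamma$ is again $\Rom$-definable, one re-runs the $\mu$-stabilizer analysis for it and checks that every limit point produced is already accounted for — its monad is stabilized by an algebraic subgroup which, by the maximality built into the definitions of $\rsmu$ and $\lsmu$, must contain (a conjugate of) one of the $L_j$, so that the new coset is absorbed into the existing list rather than forcing a genuinely new term. The inductive hypothesis, applied to the lower-dimensional pieces $C_i$, is used to keep this process finite and to put the absorbed data back into the required form. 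Assembling the finite list of pairs obtained this way exhibits $\cl(\pi_\Gamma(X))$ in the stated shape simultaneously for all $\Gamma$.

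For clause (2): if $L_i$ is maximal among $L_1,\dots,L_m$ with respect to inclusion, then $C_i$ may be taken bounded. Indeed, an unbounded $\Rom$-definable curve inside $C_i$ would, by the very escape analysis above, produce a further direction of escape and hence a $\mu$-stabilizer strictly containing $L_i$ — it contains $L_i$ together with a direction transversal to it — contradicting maximality; hence, after replacing $C_i$ by its (bounded, still definable) ``core'', the set $\pi_\Gamma(C_iL_i^\Gamma)$ is the image of the compact set $\cl(C_i)\times\pi_\Gamma(L_i^\Gamma)$ under the multiplication map $G\times M_\Gamma\to M_\Gamma$, hence closed. The principal obstacle throughout is the asymptotic analysis underpinning the $\mu$-stabilizer construction: extracting from the purely o-minimal behaviour of $X$ near infinity genuine real algebraic subgroups of the non-commutative group $G$, and doing so with enough uniformity that a single finite list of pairs $(C_i,L_i)$ — into which the lattice enters only through the operation $L\mapsto L^\Gamma$ — works for every lattice $\Gamma$ at once.
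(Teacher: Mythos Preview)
Your sketch has the right atmosphere---nonstandard extension, $\mu$-type invariants, Ratner---but it misidentifies the key invariant and skips the step that carries essentially all of the work.

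The group $\Stab^\mu(a)$ does \emph{not} describe $\st(\pi_\Gamma(a))$. What is true, and what the paper uses, is only that the closed set $D_{p,\Gamma}=\st(p(\mathfrak R)\Gamma^\sharp)$ is left-invariant under $\Stab^\mu(p)$; this by itself says nothing about \emph{which} union of cosets $D_{p,\Gamma}$ is, and in particular does not force it to be a single sub-nilmanifold. The object that actually pins down the contribution of a type is different: the \emph{nearest coset} $A_p=g_pH_p$, defined as the smallest coset of a real algebraic subgroup with $p(\mathfrak R)\subseteq \mu(G)A_p^\sharp$. Its mere existence already requires a nontrivial argument specific to unipotent groups (it fails for solvable linear groups), and the identity $D_{p,\Gamma}=\cl(A_p\Gamma)=A_p^\Gamma\Gamma$ is the core theorem. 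Its proof is an induction on $\dim G$ (not on $\dim X$): one takes $P=\Stab^\mu(p)$, forms its algebraic normal closure $N$, passes to the quotient by the central $\Gamma$-rational piece $N^\Gamma\cap Z(G)$ to apply induction, and then uses a Baire category argument (over the countably many $\Gamma$-rational subgroups) to show that the set of $g$ with $(P^g)^\Gamma=N^\Gamma$ is dense, which forces $D_{p,\Gamma}$ to be all of $H_p^\Gamma\Gamma$. None of this is visible in your outline, and without it the phrase ``the orbit of $\rsmu(a)$ controls $\st(\pi_\Gamma(a))$'' is not a proof.

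Your second claim, that only finitely many subgroups arise as $a$ ranges over $X^\sharp$, is also not correct as stated: the groups $H_\alpha$ of the nearest cosets genuinely vary in a definable family. The paper handles this not by finiteness but by showing (via tame pairs) that $\CA(X)=\{A_\alpha:\alpha\in X^\sharp\}$ is $\Rom$-definable, partitioning it into finitely many \emph{neat} families $\{g_tH_t:t\in T_i\}$, and setting $L_i=\langle\bigcup_{t\in T_i}H_t\rangle_{\mathrm{alg}}$. A second Baire category argument then shows that $\{t\in T_i:H_t^\Gamma=L_i^\Gamma\}$ is dense in $T_i$, which is what lets one replace the varying $H_t^\Gamma$ by the fixed $L_i^\Gamma$. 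The sets $C_i$ are the closures of $\{g_t:t\in T_i\}$ after choosing representatives in a semialgebraic transversal to $L_i$; your ``secondary analysis of $\cl(\pi_\Gamma(C_iL_i^\Gamma))$ via induction on $\dim X$'' is not needed and would not by itself yield the $\Gamma$-uniform equality, since the equality with $\cl(X\Gamma)$ is already obtained from the type-by-type description, not from a separate closedness check. Your argument for clause~(2) is closer in spirit to the paper's, but it too relies on the neat-family structure (an unbounded $C_i$ produces an open set of $t$ with $H_t$ contained in some $L_j\cap L_i\subsetneq L_i$, contradicting neatness), not on a bare $\mu$-stabilizer enlargement.
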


As an immediate corollary we obtain:
\begin{cor} For $G$ real unipotent and $X\subseteq G$ an $\Rom$-definable set,
 if $\Gamma\subseteq G$ is a lattice then there exists an $\Rom$-definable
set $Y\subseteq G$, such that
$$\cl(\pi_{\Gamma}(X))=\pi_\Gamma(Y).$$
\end{cor}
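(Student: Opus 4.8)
The plan is to read off $Y$ directly from Theorem~\ref{thm-main2.5}. Given $X$, let $L_1,\dots,L_m\subseteq G$ and the closed $\Rom$-definable sets $C_1,\dots,C_m\subseteq G$ be as provided by that theorem, and for the given lattice $\Gamma$ let $L_i^\Gamma$ be as in the definition preceding it. Set
$$Y=\cl(X)\cup\bigcup_{i=1}^m C_iL_i^\Gamma.$$
Theorem~\ref{thm-main2.5} asserts precisely that $\cl(\pi_\Gamma(X))=\pi_\Gamma(Y)$, so the only thing left to check is that $Y$ is $\Rom$-definable.

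I would verify this piece by piece. First, $\cl(X)$ is $\Rom$-definable, since the topological closure of a definable set in an o-minimal structure is again definable. Second, although $L_i^\Gamma$ depends on $\Gamma$, by Theorem~\ref{ratner} it is a real algebraic subgroup of $G$, hence a semialgebraic subset of $\RR^{n^2}$, and therefore $\Rom$-definable because $\Rom$ expands the real field. Third, each $C_i$ is $\Rom$-definable by the conclusion of Theorem~\ref{thm-main2.5}. The product $C_iL_i^\Gamma$ is the image of the $\Rom$-definable set $C_i\times L_i^\Gamma$ under the matrix multiplication map $G\times G\to G$, which is polynomial and hence $\Rom$-definable, so $C_iL_i^\Gamma$ is $\Rom$-definable. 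Finally, a finite union of $\Rom$-definable sets is $\Rom$-definable, so $Y$ is $\Rom$-definable.

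Since the statement is flagged as an immediate corollary, there is essentially no obstacle here: all the content lies inside Theorem~\ref{thm-main2.5}. The one point worth making explicit is that the groups $L_i^\Gamma$, which do vary with $\Gamma$, nevertheless remain semialgebraic and hence definable in the fixed structure $\Rom$; this is what allows them to be absorbed into a single $\Rom$-definable set $Y$.
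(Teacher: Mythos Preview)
Your proposal is correct and is exactly the argument the paper has in mind: the corollary is stated with no proof, only the phrase ``As an immediate corollary we obtain,'' and you have simply unpacked that immediacy by reading off $Y$ from Theorem~\ref{thm-main2.5} and checking each piece is $\Rom$-definable. The only minor remark is that $L_i^\Gamma$ is real algebraic already by its very definition (the smallest real algebraic subgroup of $G$ containing $L_i$ with $\Gamma\cap L_i^\Gamma$ co-compact), so you need not invoke Theorem~\ref{ratner} for that point.
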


As part of our analysis we conclude in Section~\ref{Leibman-sec}
  the following variant of theorems of Shah and Leibman:
  \begin{thm}\label{Leibman}
Let  $G$ be a unipotent group, viewed as a subset
of $\RR^{n^2}$, and
$F\colon \RR^d \to \RR^{n^2}$ a polynomial map that takes values in $G$.
Let $X\subseteq G$ be the image of $\RR^d$ under $F$.
If $cH\subseteq G$ is the smallest coset of a real algebraic subgroup of
$G$ with
$X\subseteq cH$ then for every lattice $\Gamma\subseteq G$
$$\cl(\pi_\Gamma(X))=\pi_\Gamma(cH^\Gamma).$$
\end{thm}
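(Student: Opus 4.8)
The plan is to translate $X$ so that it passes through the identity, handle one inclusion with Theorem~\ref{ratner}, and reduce the other inclusion, via Theorem~\ref{thm-main2.5}, to the assertion that the asymptotic data extracted from the polynomial set $X$ already generates $H$. First I would normalize: fix $p_0\in\RR^d$ and replace $F$ by $p\mapsto F(p_0)^{-1}F(p)$, which is again a polynomial map into $G$ (inversion and multiplication on a unipotent group are polynomial) with image $F(p_0)^{-1}X\ni e$. Left translation by $F(p_0)$ is a homeomorphism of $M_\Gamma$ carrying $\pi_\Gamma(H^\Gamma)$ to $\pi_\Gamma(F(p_0)H^\Gamma)$ and carrying smallest cosets to smallest cosets, so we may assume $e\in X$; then the smallest coset of an algebraic subgroup containing $X$ is just the smallest algebraic subgroup $H\supseteq X$, and the goal becomes $\cl(\pi_\Gamma(X))=\pi_\Gamma(H^\Gamma)$ for every lattice $\Gamma$. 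The inclusion $\subseteq$ is then immediate: $X\subseteq H$ gives $\pi_\Gamma(X)\subseteq\pi_\Gamma(H)$, hence $\cl(\pi_\Gamma(X))\subseteq\cl(\pi_\Gamma(H))=\pi_\Gamma(H^\Gamma)$ by Theorem~\ref{ratner}.

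For the reverse inclusion, note that $\pi_\Gamma(H^\Gamma)=\cl(\pi_\Gamma(H))$ and $\cl(\pi_\Gamma(X))$ is closed, so it suffices to show $\pi_\Gamma(H)\subseteq\cl(\pi_\Gamma(X))$. Apply Theorem~\ref{thm-main2.5} to $X$ to obtain positive-dimensional algebraic subgroups $L_1,\dots,L_m$ and closed definable sets $C_1,\dots,C_m$, depending only on $X$, with $\cl(\pi_\Gamma(X))=\pi_\Gamma\bigl(\cl(X)\cup\bigcup_{i} C_iL_i^\Gamma\bigr)$; thus I must show $\pi_\Gamma(H)$ lies in the right-hand side. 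When $G$ is abelian (or, more generally, when no commutator directions intervene) this should follow from the lattice-free inclusion $H\subseteq\cl(X)\cup\bigcup_i C_iL_i$, using only $L_i\subseteq L_i^\Gamma$; in general one must retain the enlarged groups $L_i^\Gamma$ and apply Theorem~\ref{ratner} once more to recover, inside $\pi_\Gamma(H^\Gamma)$, the dense directions that the sets $C_iL_i$ do not themselves see.

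The crux — and the step I expect to be the main obstacle — is establishing this last inclusion, which is exactly where the polynomial hypothesis on $F$ is used; for an arbitrary $\Rom$-definable $X\subseteq H$ the statement fails (take $X$ bounded). A nonconstant polynomial map is unbounded and its image escapes to infinity ``algebraically'', and the point is that the asymptotic/frontier data $(L_i,C_i)$ that the proof of Theorem~\ref{thm-main2.5} reads off such an $X$ is rich enough to reconstruct $H$. I would argue by induction on $\dim X$, peeling off one asymptotic direction at a time: a triangular normal form for $F$, adapted to the lower central series of $G$, exhibits its dominant (highest-weight) part as producing one of the subgroups $L_i$; the image of $F$ in a suitable quotient of $G$ is again a polynomial image, of smaller dimension, to which the inductive hypothesis applies; and the matching $C_i$, together with the commutator relations in $G$, closes the union up to $H$. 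The two delicate points are (a) matching the abstract $C_i,L_i$ produced by Theorem~\ref{thm-main2.5} with the explicit asymptotics of $F$, and (b) recovering the part of $H$ that is invisible in every abelian quotient of $G$ — the commutator part — which is what forces one to track the iterated frontier structure rather than merely first-order directions at infinity, and where, should the lattice-free reduction prove too lossy, Theorem~\ref{ratner} is brought back in to supply the missing dense directions.
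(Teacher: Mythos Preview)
Your normalization and the easy inclusion are fine and match the paper. The gap is in the hard inclusion: everything from ``the crux'' onward is a strategy sketch, not a proof. You invoke Theorem~\ref{thm-main2.5} to obtain the data $(C_i,L_i)$, but that theorem gives you no concrete handle on \emph{which} $L_i$ arise, so ``matching the abstract $C_i,L_i$ with the explicit asymptotics of $F$'' is not a step you can carry out --- it is the whole problem restated. Your proposed induction on $\dim X$ is not specified well enough to evaluate (what is the inductive hypothesis? to which quotient do you pass, and why does the image there have strictly smaller dimension?), and the two ``delicate points'' you flag are precisely the content of the theorem. In particular the lattice-free inclusion $H\subseteq\cl(X)\cup\bigcup_i C_iL_i$ is generally false on dimension grounds, and once you are forced to keep the $L_i^\Gamma$ you have lost the $\Gamma$-independence that was supposed to organize the argument.

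The paper does not go through Theorem~\ref{thm-main2.5} at all here; it uses the finer Corollary~\ref{maincor-cosets}, which gives $\cl(\pi_\Gamma(X))=\bigcup_{\alpha\in X^\sharp}\pi_\Gamma(A_\alpha^\Gamma)$. The hard inclusion then reduces to a single lattice-free statement: there exists $\alpha\in X^\sharp$ whose nearest coset $A_\alpha$ equals $cH$. This $\alpha$ is produced explicitly: choose $\beta\in\mathfrak R^d$ with $0\ll\beta_1\ll\cdots\ll\beta_d$ (each coordinate dominating every real polynomial in the previous ones) and set $\alpha=F(\beta)$. One shows, by induction on $\dim G$ through a proper normal algebraic subgroup, that any coset $gH_0$ near $\alpha$ must actually \emph{contain} $\alpha$ --- the polynomial hypothesis enters only as the fact that a nonconstant real polynomial evaluated at $\beta$ cannot be bounded. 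Since $\beta$ has transcendence degree $d$ over $\RR$ and the condition $F(x)\in gH_0$ is real-algebraic, $\alpha\in gH_0$ forces $X\subseteq gH_0$, hence $cH\subseteq gH_0$ and $A_\alpha=cH$. That is the missing idea: one well-chosen nonstandard point does all the work, and the induction is on $\dim G$, not on $\dim X$.
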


Finally, in Section~\ref{sec:unif-distyr}, we prove an equidistribution result for definable curves on $G$.
Recall that $\Rom$  is called {\em polynomially bounded} if every definable $1$-variable function is bounded at $\infty$ by some polynomial. We let $G$ be a unipotent group, $\Gamma$ a lattice on $G$ and  $\pi:G\to G/\Gamma$ the natural projection.
Let $\mu_{G/\Gamma}$ be the (unique) $G$-invariant  probability measure on $G/\Gamma$. We prove
\begin{thm}\label{equidist} Assume that $\Rom$ is polynomially bounded, and $\gamma:(0,\infty)\to G$ a definable curve such that $Im(\gamma)\Gamma$ is dense in $G$.
Then for every continuous function $f:G/\Gamma\to \mathbb R$,
\[\lim_{T\to \infty} \frac{1}{T}\int_0^T f(\pi(\gamma(t)))dt =\int_{G/\Gamma} f d\mu_{G/\Gamma}. \]

\end{thm}
We also show that the assumption on $\Rom$ is necessary.

\vspace{.2cm}

Here are  some comments on Theorem~\ref{thm-main2.5}:
\begin{rem}
 \begin{enumerate}
 \item If we let
$X$ be a definable curve, i.e. $\dim(X)=1$, then by Theorem~\ref{thm-main2.5}(1)
there are
 finitely many  real algebraic
subgroups $L_1,\ldots, L_m$, determined by the curve $X$, and finitely many points
$c_1,\ldots, c_m\in G$ such that for every lattice $\Gamma\subseteq G$,
$$\cl(\pi_\Gamma(X))=\pi_\Gamma(X) \cup \, \bigcup_{i=1}^m
\pi_\Gamma(c_iL_i^\Gamma).$$
Thus the closure of  $\pi_\Gamma(X)$ is obtained by
attaching to it  finitely  many sub-nilmanifolds of $G/\Gamma$ (we recall below the
definition of a sub-nilmanifold).
\item In \cite{o-minflows} we examined the same problem in the special
  case when $G$ was abelian, so could be identified with $\la \RR^n,+\ra$ and the final theorem  was very similar to the
current one. We also proved there a finer theorem when  $G=\la \CC^n,+\ra$ and
$X\subseteq \mathbb C^n$ a complex algebraic variety. That work was inspired by questions
of Ullmo and Yafaev in \cite{UY} and \cite{flow}.
\item In the same paper \cite{o-minflows}  we showed that one cannot in general replace the sets $C_i$
in Theorem~\ref{thm-main2.5} by finite sets. For a simple example (pointed out to us by
Hrushovski) one can just start with the curve $C=\{(t,1/t):t>1\}$ in $\RR^2$ and
then consider $\pi_{\ZZ^4}(C\times C)$ inside $\RR^4/\ZZ^4$. If we let $H=\RR\times
\{0\}$, then the frontier of $\pi_{\ZZ^4}(C\times C)$ equals
$$ \pi_{\ZZ^4}((C\times H)\cup (H\times C)\cup (H\times H)).$$
\end{enumerate}
\end{rem}

We end this introduction by noting that definable sets in o-minimal structures allow
for a richer collection than semialgebraic sets, and thus for example we could take
$X\subseteq \UT(3,\RR)$ to be the following
 $\RR_\mathrm{an,exp}$-definable set
 $$\left\{\left(%
\begin{array}{ccc}
  1 & e^y & \arctan(y) \\
  0 & 1 & 1/\sqrt{x^2+y^4} \\
  0 & 0 & 1 \\
\end{array}%
\right):  x,y>0 \, \right \}.$$

\subsection{On definable subsets of arbitrary nilpotent Lie groups}\label{sec-alternative}
Instead of working with real unipotent groups we could have worked in a more general setting:

Let $G$ be a  connected, simply connected nilpotent Lie group. It is
known (e.g.\  see
\cite{nilpotent-book}) that  $G$ is Lie isomorphic to a real algebraic subgroup
$G_0$ of $\UT(n,\RR)$. Given an o-minimal structure $\Rom$, we may declare a subset
of  $G$ to be $\Rom$-definable (or real algebraic) if its image under the above
isomorphism is an $\Rom$-definable (or real algebraic) subset of $G_0$. As noted in
Lemma~\ref{homom2} below, every Lie isomorphism between real unipotent groups is
given by a polynomial map and thus this notion of definability (or algebraicity)
does not depend of the choice of $G_0$ or the isomorphism between $G$ and $G_0$. It
follows from  Fact~\ref{equiv} below that every closed connected subgroup of $G$ is
algebraic in this sense, and thus Theorem~\ref{thm-main2.5} holds for an arbitrary
connected, simply connected nilpotent Lie groups, under the above interpretation of
the relevant notions.

\subsection{On the proof}
Our proof combines  model theory with the theory
of nilpotent Lie groups. It breaks down into three main parts.

 Given an  $\Rom$-definable $X\subseteq G$ we examine the contribution of complete types
on $X$ (see Preliminaries for more details on the basic notions) to the closure of
$\pi_{\Gamma}(X)$. To each complete type $p$ on $X$ we assign ``the nearest coset to
$p$'',  a coset of a real algebraic subgroup of $G$, which we denote by $c_pH_p$
(see Section~\ref{sec-nearest}).  We then prove, see Corollary~\ref{maincor-cosets}, that for every lattice
$\Gamma$, the closure of $\pi_\Gamma(X)$ is the union of all
$\pi_\Gamma(c_pH_p^\Gamma)$, as $p$ varies over all complete types
 on $X$.  Notice that the coset $c_pH_p$   is independent of the lattice
$\Gamma$.

Next, in Lemma~\ref{definability}, we use model theory to show that the family of
nearest cosets
$$\{c_pH_p:p \mbox{ a complete type on } X\}$$ is itself a definable family in
$\Rom$.

Finally, we use Baire Category Theorem to obtain finitely many families of fixed
subgroups of $G$.

\section{Preliminaries}

\subsection{Lattices and nilmanifolds}

We list some basic notions and properties of lattices in simply connected nilpotent
Lie groups.  For a reference we use \cite{nilpotent-book} and \cite{Leibman}.

We identify the Lie algebra of $\UT(n,\RR)$ with $\ut(n,\RR)$, the space of real
$n{\times}n$ upper triangular matrices with $0$ on the main diagonal.

The following fact will be used often.

\begin{fact}\label{fact:exp}  The matrix exponential map restricted to
  $\ut(n,\RR)$  is polynomial  and maps $\ut(n,\RR)$ diffeomorphically
  onto   $\UT(n,\RR)$.  Its inverse $\log\colon \UT(n,\RR)\to
  \ut(n,\RR)$ is a polynomial map as well.
\end{fact}

\begin{rem}\label{rem:exp}
  If $G$ is a closed subgroup of $\UT(n,\RR)$ then we identify its
Lie algebra $\g$ with a subalgebra of $\ut(n,\RR)$.  It follows from
Fact~\ref{fact:exp} that if $G$ is a connected closed subgroup of $\UT(n,\RR)$ then
the exponential map $\exp_G\colon \g\to G$ is a polynomial map (in matrix
coordinates) that is also a diffeomorphism, and its  inverse $\log_G\colon G\to \g$
is polynomial as well.
\end{rem}

We  note:
\begin{lem}\label{equiv}
  Assume that $G\subseteq \UT(n,\RR)$ is a subgroup. Then the following are
equivalent:
\begin{enumerate}
\item $G$ is a closed, connected subgroup of $\UT(n,\RR)$.
\item $G$ is a real algebraic subgroup of $\UT(n,\RR)$.
\item $G$ is definable in $\Rom$.
\end{enumerate}
\end{lem}

\begin{proof} The equivalence of (1) and (2) follows from the fact the exponential map and
its inverse are polynomial maps.

Clearly, every real algebraic subgroup of
$\UT(n,\RR)$ is $\Rom$-definable, so $(2)\Rightarrow (3)$.

To see that $(3)\Rightarrow (1)$, note that every
definable  in an o-minimal structure subgroup is closed and has finitely
many connected components \cite{pillay}.  Let $G^0$ be the connected
component of $G$ containing the identity $e$.  Since $G$ is torsion
free, by \cite{strz}, its o-minimal Euler characters $\chi(G)$ is $+1$
or $-1$.  Since o-minimal Euler characteristic is additive and
invariant under definable bijections, we have $[G:G^0]\chi(G_0)=\pm
1$.  Hence $[G:G^0]=1$, $G=G^0$ and $G$ is connected.
\end{proof}

For the rest of this section we assume that $G$ is a real unipotent group, namely a
real algebraic subgroup of $\UT(n,\RR)$,  with $\mathfrak g$ its Lie algebra. Since
$\exp_G:\mathfrak g\to G$ is a diffeomorphism, the group $G$ is a simply connected,
and we have (\cite[Corollary~5.4.6]{nilpotent-book}):
\begin{fact} A discrete subgroup $\Gamma\subseteq G$ is co-compact
  (i.e. $G/\Gamma$ is compact) if and only if there is a $G$-invariant
probability measure  on $G/\Gamma$.
\end{fact}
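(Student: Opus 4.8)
This is the classical theorem (Mal'cev) that a discrete subgroup of a simply connected nilpotent Lie group is cocompact as soon as it has finite covolume; here is the route I would take. The implication ``$G/\Gamma$ compact $\Rightarrow$ finite Haar measure'' is soft: a nilpotent Lie group is unimodular, so $G/\Gamma$ carries a nonzero $G$-invariant Radon measure $\mu$, transported from Haar measure via the covering map $\pi_\Gamma\colon G\to G/\Gamma$, and if $G/\Gamma$ is compact then $\mu(G/\Gamma)<\infty$, since a Radon measure is finite on compacta --- concretely, compactness gives a compact $K\subseteq G$ with $K\Gamma=G$, so a Borel fundamental domain $D\subseteq K$ for $\Gamma$ has finite Haar measure, and that number is $\mu(G/\Gamma)$.

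For the converse, assume $\mu(G/\Gamma)<\infty$; I would induct on $\dim G$. The cases $\dim G\le 1$ are immediate ($G=\{e\}$, or $G\cong\RR$ whose discrete subgroups are $\{0\}$ and $c\ZZ$). For the inductive step, a nontrivial simply connected nilpotent $G$ has nontrivial connected center $Z$; let $\bar G:=G/Z$, again simply connected nilpotent and of dimension $\dim G-\dim Z<\dim G$, with quotient map $p\colon G\to\bar G$. The plan is: (i) show that $\Gamma\cap Z$ is cocompact in $Z$, so that $Z/(\Gamma\cap Z)$ is a torus; (ii) deduce from (i) that $\Gamma Z$ is closed in $G$ --- its image in $G/\Gamma$ is the compact set $\pi_\Gamma(Z)\cong Z/(\Gamma\cap Z)$, hence closed, so $\Gamma Z=\pi_\Gamma^{-1}(\pi_\Gamma(\Gamma Z))$ is closed --- and hence that $p(\Gamma)\cong\Gamma/(\Gamma\cap Z)$, being a countable closed subgroup of the Lie group $\bar G$, is discrete; then a Fubini computation along the fibration $Z/(\Gamma\cap Z)\hookrightarrow G/\Gamma\twoheadrightarrow\bar G/p(\Gamma)$, using unimodularity of $Z$, $G$ and $\bar G$, shows that $\bar G/p(\Gamma)$ has finite volume; (iii) apply the inductive hypothesis to $p(\Gamma)$ to get that $\bar G/p(\Gamma)$ is compact, whence $G/\Gamma$ --- fibered over this compact base with fibers the torus $Z/(\Gamma\cap Z)$ --- is compact.

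The only real difficulty is step (i): that finite covolume forces $\Gamma$ to meet the center cocompactly. Nilpotency is essential, and no soft argument will do --- the analogue fails for semisimple groups, $\operatorname{SL}(2,\ZZ)$ being of finite covolume but not cocompact in $\operatorname{SL}(2,\RR)$, and even the mere existence of a free, volume-preserving, orbit-noncompact $\RR$-flow on a finite-volume space (e.g.\ the irrational flow on a $2$-torus) shows that the action of the center alone cannot settle matters. What settles it is Mal'cev's structure theory: choosing a basis of the Lie algebra of $G$ adapted to a central filtration and tracking the covolume down successive one-dimensional central extensions pins $\Gamma\cap Z$ --- more canonically, $\Gamma\cap C$ for the last nontrivial term $C\subseteq Z$ of the lower central series --- to be cocompact. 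The mechanism is visible already in the Heisenberg group: if $\Gamma\cap Z=\{e\}$ then $p\rest\Gamma$ is an injective homomorphism into the abelian $\bar G$, so $\Gamma$ is abelian, but a discrete abelian subgroup of the Heisenberg group has Hirsch length at most $2<3=\dim G$ and hence infinite covolume --- a contradiction. Carrying this out in general is exactly \cite[Corollary~5.4.6]{nilpotent-book}, whose argument I would follow.
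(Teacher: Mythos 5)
The paper supplies no proof of this Fact: it is quoted directly from Corwin--Greenleaf, with \cite[Corollary~5.4.6]{nilpotent-book} cited as the source and nothing further said. Your sketch is therefore strictly more detailed than the text you are being compared against, and it is the standard argument: the soft direction via a Borel fundamental domain inside a compact set $K$ with $K\Gamma=G$, and the converse by induction on $\dim G$ through the central quotient $p\colon G\to G/Z$, using cocompactness of $\Gamma\cap Z$ in $Z$ to get closedness of $\Gamma Z$, hence discreteness of the countable closed subgroup $p(\Gamma)$, then Weil's integration formula to pass finiteness of covolume down to $\bar G/p(\Gamma)$, and finally compactness of the total space of a fibration with compact base and compact torus fiber. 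All of these steps are sound. The one caveat is the step you yourself flag as the crux, (i): that finite covolume forces $\Gamma\cap Z$ (equivalently $\Gamma\cap C$ for the last term of the lower central series) to be cocompact. You do not prove this but defer it to the argument behind the very corollary the paper cites; since that step carries essentially the full difficulty of the theorem (your Heisenberg computation illustrates the mechanism but does not generalize for free), your proof is not self-contained --- though it is a perfectly reasonable stopping point given that the paper itself defers the entire statement to the same reference.
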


\begin{defn}  A subgroup $\Gamma$ of $G$ is called \emph{a lattice in G} if $\Gamma$ is discrete and co-compact.
 If $\Gamma$ is a lattice in $G$ then the quotient $G/\Gamma$ is called \emph{a compact
nilmanifold}.

Given a lattice $\Gamma\subseteq G$,  a real algebraic subgroup $H$ of $G$ is called a
\emph{$\Gamma$-rational} if $\Gamma\cap H$ is a lattice in $H$.
\end{defn}

\begin{rem}\label{rem:uniform}
In \cite{nilpotent-book} a closed subgroup $H$ of $G$ is
defined to be $\Gamma$-rational if the Lie algebra $\h$ of $H$ has a
 basis in the $\QQ$-linear span of $\log_G(\Gamma)$.  By
\cite[Theorem~5.1.11] {nilpotent-book} these two definitions are equivalent.
\end{rem}

 The following is easy to verify:
\begin{fact}\label{Lambda1}
If $\Gamma$ is a lattice in $G$ then there is no real
algebraic subgroup of $G$ containing $\Gamma$ other than $G$.
\end{fact}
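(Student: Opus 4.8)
The plan is to transfer the problem to the Lie algebra $\g$ via the polynomial diffeomorphism $\log_G$ and then invoke the rationality of $\Gamma$ recorded in Remark~\ref{rem:uniform}.

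First I would fix a real algebraic subgroup $H$ of $G$ with $\Gamma\subseteq H$ and aim to show $\h=\g$, where $\h$ is the Lie algebra of $H$. This suffices: $\exp_G$ is onto $G$ (Fact~\ref{fact:exp}, Remark~\ref{rem:exp}) and $\exp_G(\h)\subseteq H$, so $\h=\g$ would give $G=\exp_G(\g)=\exp_G(\h)\subseteq H$. By Fact~\ref{equiv}, $H$ is a closed connected subgroup of $\UT(n,\RR)$, so by Remark~\ref{rem:exp} the restriction $\exp_G\rest \h\colon\h\to H$ is a diffeomorphism; hence $\log_G(H)=\h$, and in particular $\log_G(\Gamma)\subseteq\h$. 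Since $\h$ is an $\RR$-linear subspace of $\g$, the $\QQ$-linear span of $\log_G(\Gamma)$ is then contained in $\h$.

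It remains to see that the $\QQ$-linear span of $\log_G(\Gamma)$ already contains an $\RR$-basis of $\g$. This is exactly the assertion that $G$ itself is $\Gamma$-rational: indeed $\Gamma=\Gamma\cap G$ is a lattice in $G$ by hypothesis, so $G$ is $\Gamma$-rational in the sense of the Definition above, and by the equivalence cited in Remark~\ref{rem:uniform} the Lie algebra $\g$ has a basis lying in the $\QQ$-linear span of $\log_G(\Gamma)$. Combining with the previous paragraph, this basis lies in $\h$, so $\h=\g$, as required.

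There is no real obstacle in this argument beyond the input from Malcev theory packaged in Remark~\ref{rem:uniform}; the only point to be careful about is the compatibility $\log_G\rest H=\log_H$, i.e.\ that the logarithm of an element of $H$ computed inside $G$ lands in $\h$, which is immediate from Remark~\ref{rem:exp}. Alternatively one could argue topologically: $H/\Gamma$ is closed in the compact space $G/\Gamma$, hence compact, so $G/H$ is compact; but $G/H$ is diffeomorphic to $\RR^{\dim G-\dim H}$ for a simply connected nilpotent $G$ and a closed connected $H$, forcing $\dim H=\dim G$ and hence $H=G$. I prefer the algebraic route since it relies only on facts already quoted in this section.
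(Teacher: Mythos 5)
Your argument is correct. The paper itself offers no proof of Fact~\ref{Lambda1} (it is introduced with ``The following is easy to verify''), so there is nothing to compare against; your write-up simply supplies the verification. The main route is sound: applying the equivalence in Remark~\ref{rem:uniform} to $H=G$ gives that $\g$ has a basis inside the $\QQ$-linear span of $\log_G(\Gamma)$, and since $\log_G(\Gamma)\subseteq\h$ and $\h$ is an $\RR$-subspace (hence $\QQ$-closed), that basis lies in $\h$, forcing $\h=\g$ and $H=\exp_G(\h)=G$. The compatibility point you flag ($\log_G\rest H=\log_H$, so $\log_G(\Gamma)$ really lands in $\h$) is exactly the one detail worth making explicit, and Remark~\ref{rem:exp} covers it. Your alternative topological argument also works and is closer in spirit to the rest of the paper: $H\Gamma=H$ is closed, so $\pi_\Gamma(H)$ is compact, the continuous surjection $G/\Gamma\to G/H$ shows $G/H$ is compact, and the weak Malcev coordinates of Lemma~\ref{complement-to-group} identify $G/H$ with $\RR^{\dim G-\dim H}$, forcing $\dim H=\dim G$ and hence $H=G$ by connectedness. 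Either route is acceptable; the algebraic one indeed uses only what is quoted in the section up to that point.
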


\begin{fact}\label{rational-facts}
Let $H\subseteq G$ be a real algebraic normal subgroup with $\pi:G\to G/H$ the quotient map. Let
$\Gamma\subseteq G$ a discrete subgroup. Then: \begin{enumerate} \item If $\Gamma$ is a
lattice in $G$ and $\Gamma\cap H$ is a lattice in $H$ then $H\Gamma$ is closed in
$G$ and $\pi(\Gamma)$ is a lattice in $G/H$.

\item If $\Gamma\cap H$ is a lattice in $H$ and $\pi(\Gamma)$ is a lattice in $G/H$
then $\Gamma$ is a lattice in $G$.

\item If $\Gamma$ is a lattice in $G$ then $H$ is $\Gamma$-rational if and only if
$\pi_\Gamma(H)$ is closed.

 \item If $\Gamma$ is a lattice in $G$ then all subgroups in the ascending central
series are $\Gamma$-rational, in particular $Z(G)$ is $\Gamma$-rational. Also,
$[G,G]$ and all subgroups in the descending central series are  $\Gamma$-rational
subgroups (in particular closed).

\item If $\Gamma$ is a lattice in $G$ and  $H_1, H_2\subseteq G$ are real algebraic
$\Gamma$-rational subgroups then so is $H_1\cap H_2$.
\end{enumerate}
\end{fact}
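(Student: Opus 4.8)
The five items are all standard facts about lattices in simply connected nilpotent Lie groups, and the plan is to reduce the first four to elementary topology of the homogeneous spaces involved, quoting the structure theory only in item~(4). For \emph{(1)}, I would first show $H\Gamma$ is closed: since $\Gamma\cap H$ is cocompact in $H$ there is a compact $K\subseteq H$ with $H=K(\Gamma\cap H)$, and then $H\Gamma=K\Gamma$, the product of the compact set $K$ with the closed discrete set $\Gamma$, hence closed. Consequently $\pi(\Gamma)=\Gamma H/H$ is a closed subgroup of $G/H$; it is countable, hence a closed Lie subgroup of dimension $0$, i.e.\ discrete, while $(G/H)/\pi(\Gamma)=G/\Gamma H$ is a continuous image of the compact space $G/\Gamma$, hence compact. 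Thus $\pi(\Gamma)$ is a lattice in $G/H$.

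For \emph{(3)}, one direction is immediate from (1): if $H$ is $\Gamma$-rational then $H\Gamma$ is closed, so $\pi_\Gamma(H)=H\Gamma/\Gamma$ is closed in $G/\Gamma$. For the converse I would check that the orbit map $H/(\Gamma\cap H)\to\pi_\Gamma(H)$ is a homeomorphism — it is a continuous bijection, and open because $U\Gamma\cap H=U(\Gamma\cap H)$ for open $U\subseteq H$ — so that compactness of the closed set $\pi_\Gamma(H)$ inside the compact space $G/\Gamma$ forces $H/(\Gamma\cap H)$ to be compact, i.e.\ the discrete group $\Gamma\cap H$ to be cocompact in $H$. For \emph{(2)}, $\Gamma$ is discrete by hypothesis, so it suffices to show $G/\Gamma$ is compact. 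Since $\pi(\Gamma)$ is cocompact in $G/H$, the space $G/\Gamma H$ is compact, so there is a compact $C\subseteq G$ with $G=C\Gamma H$; moreover $\pi_\Gamma(H)=H\Gamma/\Gamma$ is compact, being the continuous image of the compact $H/(\Gamma\cap H)$. Then $G/\Gamma=C\cdot\pi_\Gamma(H)$, the image of the compact set $C\times\pi_\Gamma(H)$ under the (continuous) action of $G$ on $G/\Gamma$, hence compact; so $\Gamma$ is a lattice.

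Items \emph{(4)} and \emph{(5)} I would treat through Remark~\ref{rem:uniform} and the rational structure on $\g$ determined by $\Gamma$. For (5) this is pure linear algebra: $H_j$ is $\Gamma$-rational iff $\h_j$ is spanned, over $\RR$, by vectors in $\g_\QQ:=\QQ\text{-span}(\log_G\Gamma)$, and the intersection of two such rational subspaces is again rational (compare $\QQ$- and $\RR$-dimensions of sums and intersections); since $\h_1\cap\h_2$ is the Lie algebra of the real algebraic group $H_1\cap H_2$, that group is $\Gamma$-rational. For (4) I would invoke Malcev's theorem (\cite{nilpotent-book}): $\g_\QQ$ is a $\QQ$-Lie subalgebra with $\g=\g_\QQ\otimes_\QQ\RR$, and the center, the terms of the ascending and descending central series, and $[\g,\g]$ are characteristic subalgebras, hence defined over $\QQ$ with respect to $\g_\QQ$; by Remark~\ref{rem:uniform} the corresponding subgroups of $G$ are $\Gamma$-rational, and, being real algebraic, closed by Fact~\ref{equiv}. (Alternatively the lower central series follows from the fact that the lower central series of $\Gamma$ consists of lattices in the corresponding subgroups of $G$, and the upper central series then follows inductively from (1) once one knows the center is $\Gamma$-rational.)

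The step I expect to be the only non-routine one is \emph{(4)}: it genuinely rests on the existence of a rational structure on $\g$ compatible with $\Gamma$ — Malcev's theorem — which I would simply cite from \cite{nilpotent-book}. Everything else is a formal consequence of compactness of $G/\Gamma$, the polynomial structure of $\exp_G$ recalled in Remark~\ref{rem:exp}, and linear algebra over $\QQ$; the main care needed is to verify the few openness/homeomorphism claims used above so that compactness passes between the spaces $H/(\Gamma\cap H)$, $\pi_\Gamma(H)$, $G/\Gamma$ and $G/\Gamma H$.
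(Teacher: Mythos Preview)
Your proposal is correct and, for items (1), (2), and (4), is actually more detailed than the paper's proof, which simply cites \cite[Lemma~5.1.4]{nilpotent-book} for (1)--(2) and \cite[Proposition~5.2.1]{nilpotent-book} for (4); your direct arguments via compact fundamental sets and the rational Lie-algebra structure are exactly what those references unpack. For (5) the two proofs coincide.

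The one place where your route diverges noticeably is (3). The paper argues the converse direction more directly: once $\pi_\Gamma(H)$ is closed in the compact $G/\Gamma$ it is compact, so there is a compact $K\subseteq H$ with $\pi_\Gamma(K)=\pi_\Gamma(H)$; then any $h\in H$ equals $k\gamma$ with $k\in K$, $\gamma\in\Gamma$, forcing $\gamma\in H\cap\Gamma$, whence $H=K(H\cap\Gamma)$ and $H/(H\cap\Gamma)$ is compact. Your approach via the orbit homeomorphism $H/(\Gamma\cap H)\to\pi_\Gamma(H)$ reaches the same conclusion, but the justification ``open because $U\Gamma\cap H=U(\Gamma\cap H)$'' is not quite enough as stated: that identity shows the fibers are right, not that the map is open onto the \emph{subspace} topology on $\pi_\Gamma(H)\subseteq G/\Gamma$. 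What actually makes it work is that $H$ is normal, so closedness of $H\Gamma$ forces $\pi(\Gamma)$ to be a closed countable (hence discrete) subgroup of $G/H$, after which the distinct cosets $H\gamma$ are isolated in $H\Gamma$ and openness follows. Either patch this, or adopt the paper's compact-$K$ argument, which sidesteps the issue entirely.
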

\begin{proof}
$(1)$ and $(2)$ follow from    \cite[Lemma~5.1.4] {nilpotent-book}.

$(3)$.  If $H$ is $\Gamma$-rational then $H\Gamma$ is closed in $G$ by
$(1)$. Assume $H\Gamma$ is closed in $G$. Then $\pi_\Gamma(H)$ is
closed in $G/\Gamma$, hence compact.  We can find then a compact
subset $K\subseteq H$ such that    $\pi_\Gamma(K)=\pi_\Gamma(H)$, i.e.
$K\Gamma =H\Gamma$. It is not hard to see that $K\Gamma$ is closed,
since it is a product of compact
 and closed sets.

$(4)$ follows from \cite[Proposition~5.2.1] {nilpotent-book}.

$(5)$ follows from Remark~\ref{rem:uniform}.  Indeed, since  $H_1$ and
$H_2$ are $\Gamma$-rational their Lie algebras $\h_1$ and $\h_2$ both have basis in the $\QQ$-vector
space $\QQ$-span of $(\log_G(\Gamma)$.  The Lie algebra of $H_1\cap
H_2$ is $\h_1\cap \h_2$ and it has basis in the same $\QQ$-vector
space.
\end{proof}

We shall also need the following:

\begin{lem}\label{rational}
Let  $\Gamma\subseteq G$ be a lattice in $G$.
Let $H$ be a real algebraic normal subgroup of $G$. Then $H^\Gamma$ is also normal
in $G$.
\end{lem}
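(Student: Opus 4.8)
The plan is first to show that every $\gamma\in\Gamma$ normalizes $H^\Gamma$, and then to upgrade this to normality in all of $G$ by exploiting the fact (Fact~\ref{Lambda1}) that $\Gamma$ lies inside no proper real algebraic subgroup of $G$.

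For the first step, fix $\gamma\in\Gamma$ and look at the conjugate $\gamma H^\Gamma\gamma^{-1}$. Conjugation by $\gamma$ is a Lie automorphism of $G$, hence a polynomial map (Lemma~\ref{homom2}), so $\gamma H^\Gamma\gamma^{-1}$ is again a real algebraic subgroup of $G$. Since $H$ is normal in $G$ we have $H=\gamma H\gamma^{-1}\subseteq\gamma H^\Gamma\gamma^{-1}$; and since $\gamma^{-1}\Gamma\gamma=\Gamma$ we get $\gamma H^\Gamma\gamma^{-1}\cap\Gamma=\gamma(H^\Gamma\cap\Gamma)\gamma^{-1}$, which is discrete and co-compact in $\gamma H^\Gamma\gamma^{-1}$ because conjugation by $\gamma$ restricts to a diffeomorphism from $H^\Gamma$ onto $\gamma H^\Gamma\gamma^{-1}$ and $H^\Gamma\cap\Gamma$ is a lattice in $H^\Gamma$ by the definition of $H^\Gamma$. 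Thus $\gamma H^\Gamma\gamma^{-1}$ is a real algebraic subgroup of $G$ containing $H$ whose intersection with $\Gamma$ is co-compact, so the minimality clause in the definition of $H^\Gamma$ yields $H^\Gamma\subseteq\gamma H^\Gamma\gamma^{-1}$. As both groups are connected (Fact~\ref{equiv}) and of the same dimension, they coincide; that is, $\gamma H^\Gamma\gamma^{-1}=H^\Gamma$.

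For the second step, consider the normalizer $N=\{g\in G:gH^\Gamma g^{-1}=H^\Gamma\}$ of $H^\Gamma$ in $G$. Because $H^\Gamma$ is $\Rom$-definable and conjugation is definable (the inverse of a matrix in $\UT(n,\RR)$ being polynomial in its entries), $N$ is an $\Rom$-definable subgroup of $G$; by Fact~\ref{equiv} it is therefore a real algebraic subgroup of $G$. By the first step $\Gamma\subseteq N$, so Fact~\ref{Lambda1} forces $N=G$, which is exactly the assertion that $H^\Gamma$ is normal in $G$.

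I expect the only point requiring a moment's care to be the passage from the inclusion $H^\Gamma\subseteq\gamma H^\Gamma\gamma^{-1}$ to equality: it uses that a connected Lie subgroup contained in a connected Lie subgroup of the same dimension must equal it, together with the connectedness of real algebraic subgroups supplied by Fact~\ref{equiv}. The remainder is routine bookkeeping with lattices and the minimality defining $H^\Gamma$.
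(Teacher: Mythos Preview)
Your proof is correct and follows essentially the same approach as the paper's: show that $\Gamma$ normalizes $H^\Gamma$ by using minimality of $H^\Gamma$ against its $\Gamma$-conjugates, then invoke Fact~\ref{Lambda1} on the normalizer. You have simply filled in details that the paper leaves implicit (why the conjugate is $\Gamma$-rational, why the normalizer is real algebraic, and how the inclusion upgrades to equality).
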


\begin{proof}
Since $H$ is invariant by conjugation, and every $\Gamma$-conjugate of
$H^\Gamma$ is also $\Gamma$-rational, it follows that $H^\Gamma$ is normalized by
$\Gamma$. Thus the normalizer of $H^\Gamma$ is a real algebraic subgroup containing
$\Gamma$, so by Fact~\ref{Lambda1} equals to $G$.
\end{proof}

\begin{defn} Let $M=G/\Gamma$ be a compact nilmanifold. A set $S\subseteq M$ is called a \emph{sub-nilmanifold of $N$} if
there exists $a\in G$ and a $\Gamma$-rational group $H\subseteq G$ such that
\[ S=\pi_\Gamma(aH).\]
\end{defn}

The group $G$ acts on $M$ on the left and  the sub-nilmanifold $S$ can
also be written as $S=a\cdot \pi_\Gamma(H)$.

Note that a sub-nilmanifold of $M$ is closed in $M$ and can be written as an orbit
of the element $\pi_\Gamma(a)$, under the group $aHa^{-1}$.

\medskip
We use the following lemma to identify quotients of unipotent group
with semialgebraic sets:

\begin{lem}\label{complement-to-group} Let $G$ be a real unipotent group and let $H\subseteq G$ be a real algebraic
subgroup. Then there exists a closed semialgebraic set $A\subseteq G$ such that the map
$f:A\times H\to G$ given by $(a,h) \mapsto \cdot h$ is a diffeomorphism.
\end{lem}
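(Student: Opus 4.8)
The plan is to realize $A$ as a set of ordered products of one-parameter subgroups coming from a chain of subalgebras adapted to the Lie algebra $\h$ of $H$, and then to reduce the diffeomorphism statement to the codimension-one normal case, which can be checked directly. Concretely: since $\g$ is nilpotent, every proper subalgebra is properly contained in its normalizer, so one can build a chain of subalgebras $\h=\h_0\subsetneq\h_1\subsetneq\cdots\subsetneq\h_k=\g$ with each $\h_i$ an ideal of codimension one in $\h_{i+1}$, and pick $X_i\in\h_{i+1}\setminus\h_i$. Put $H_i=\exp_G(\h_i)$; by Remark~\ref{rem:exp} and Fact~\ref{equiv} these are real algebraic subgroups, $H_0=H$, $H_k=G$, and $H_i$ is normal of codimension one in $H_{i+1}$.

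\emph{Step: the codimension-one case.} I claim that if $N\triangleleft L$ are real unipotent groups with $\dim L=\dim N+1$ and $X\in\mathfrak{l}\setminus\mathfrak{n}$, then $(t,h)\mapsto\exp_L(tX)h$ is a diffeomorphism $\RR\times N\to L$ with polynomial inverse. Since $\mathfrak{l}/\mathfrak{n}$ is one-dimensional it is abelian, so $[\mathfrak{l},\mathfrak{l}]\subseteq\mathfrak{n}$; choose a linear functional $\lambda\colon\mathfrak{l}\to\RR$ with $\ker\lambda=\mathfrak{n}$. By the Baker--Campbell--Hausdorff formula $\lambda\circ\log_L$ is a group homomorphism $L\to(\RR,+)$ with kernel $N$, and $\lambda(\log_L\exp_L(tX))=t\lambda(X)$ with $\lambda(X)\neq 0$; hence each $g\in L$ is uniquely of the form $\exp_L(tX)h$ with $t=\lambda(\log_L g)/\lambda(X)$ and $h=\exp_L(-tX)g$. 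As $\exp_L$ and $\log_L$ are polynomial (Remark~\ref{rem:exp}), both this map and its inverse are polynomial.

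\emph{Step: assembling the decomposition.} Applying the previous step to $H_i\triangleleft H_{i+1}$ in the variable $t_i$ and composing over $i$, the map
\[
\phi\colon\RR^k\times H\to G,\qquad \phi(t_0,\dots,t_{k-1},h)=\exp_G(t_{k-1}X_{k-1})\cdots\exp_G(t_0X_0)\,h,
\]
is a diffeomorphism with polynomial inverse. Let $\psi\colon\RR^k\to G$ be given by $\psi(\bar t)=\exp_G(t_{k-1}X_{k-1})\cdots\exp_G(t_0X_0)$, and set $A=\psi(\RR^k)=\phi(\RR^k\times\{e\})$. Then $A$ is semialgebraic, being the image of $\RR^k$ under the polynomial map $\psi$ (Tarski--Seidenberg), and closed in $G$, since $\RR^k\times\{e\}$ is closed in $\RR^k\times H$ and $\phi$ is a homeomorphism; moreover $\psi$ is a diffeomorphism of $\RR^k$ onto the embedded submanifold $A$, its inverse being the restriction to $A$ of the $\RR^k$-component of $\phi^{-1}$. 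Therefore $f\colon A\times H\to G$, $(a,h)\mapsto a\cdot h$, equals $\phi\circ(\psi^{-1}\times\mathrm{id}_H)$ and is a diffeomorphism.

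The main obstacle is the codimension-one case: the point requiring care is that the inverse of $(t,h)\mapsto\exp_L(tX)h$ be smooth, which one obtains by identifying $L/N$ with $(\RR,+)$ through a functional annihilating $[\mathfrak{l},\mathfrak{l}]\subseteq\mathfrak{n}$ — everything else is bookkeeping. It is also worth noting that the chain of subalgebras cannot be replaced by taking $A=\exp_G(\mathfrak{a})$ for an arbitrary linear complement $\mathfrak{a}$ of $\h$: already in the Heisenberg group there are ``twisted'' complements for which $f$ is not surjective, which is precisely why an adapted chain (equivalently, a Malcev basis through $\h$) is used.
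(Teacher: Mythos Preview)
Your proof is correct and rests on the same idea as the paper's: both produce $A$ as an ordered product of one-parameter subgroups associated to a chain of subalgebras through $\mathfrak h$ (what the paper calls a weak Malcev basis for $\mathfrak g$ through $\mathfrak h$). The difference is one of packaging. The paper simply cites Corwin--Greenleaf for the existence of such a basis and for the fact that the resulting second-kind coordinate map $\psi(s_1,\dots,s_n)=\exp_G(s_1\xi_1)\cdots\exp_G(s_n\xi_n)$ is a polynomial diffeomorphism, and then inverts at the end to pass from a decomposition $G=H\cdot A'$ to $G=A\cdot H$. You instead build the chain upward from $\mathfrak h$ (so the complementing factors sit on the left from the start, avoiding the inversion), and you prove the diffeomorphism property directly by isolating the codimension-one normal step and iterating. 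Your argument is thus more self-contained, and your explicit identification of $L/N$ with $(\RR,+)$ via $\lambda\circ\log_L$ is exactly the content hidden inside the cited Proposition~1.2.8.
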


\begin{proof} Let $\mathfrak h\subseteq \mathfrak g\subseteq \ut(n,\RR)$ be the Lie algebras of $H$ and
$G$, respectively, and let $n=\dim G$ and $k=\dim H$. By
\cite[Theorem~1.1.13]{nilpotent-book}, there is a weak Malcev basis $\{\xi_1,\ldots,
\xi_n\}$   for $\mathfrak g$ through $\mathfrak h$. Namely, $\{\xi_1,\ldots,
\xi_k\}$ is a basis for $\mathfrak h$, and for every $m\leq n$, the $\RR$-linear
span of $\xi_1,\ldots, \xi_m$ is a Lie subalgebra of $\mathfrak g$.

By \cite[Proposition~1.2.8]{nilpotent-book}, the map $\psi:\RR^n\to G$ defined by
$$\psi(s_1,\dotsc,s_n)=\exp_G(s_1\xi_1)\cdot \,\dotsc\, \cdot
\exp_G(s_n\xi_n)$$
is a polynomial diffeomorphism. It sends $\RR^k\times \{0_{n-k}\}$ onto the group $H$ and the
subspace $\{0_k\}\times \RR^{n-k}$ onto a closed semialgebraic subset of $G$, which
we call $A'$. We have $G=H\cdot A'$, and  if we now let $A=\{a^{-1}:a\in A'\}$ and
replace $\psi(\bar s)$  by $\psi(\bar s)^{-1}$, then we see that $G=A\cdot H$ and
the result follows.\end{proof}

\medskip
Recall that in any nilpotent group $G$, if $H\subseteq G$ is a proper subgroup then $H$
is contained in a proper normal subgroup of $G$. Let us see that this remains true
when restricting to real unipotent groups:

\begin{claim}\label{proper-normal}
  If $G$ is a real unipotent group and $H\subseteq G$ is a  proper
real algebraic subgroup then $H$ is contained in a proper normal real algebraic
subgroup of $G$.
\end{claim}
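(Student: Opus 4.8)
The plan is to push $H$ down to the abelianization of $G$. We may assume $G$ is nontrivial, since otherwise $G$ has no proper subgroup and there is nothing to prove. Then $[G,G]$ is a proper subgroup of $G$, and, being a connected normal subgroup of the simply connected nilpotent Lie group $G$, it is closed; hence by Fact~\ref{equiv} it is a proper real algebraic normal subgroup of $G$. Let $q\colon G\to A:=G/[G,G]$ be the quotient map, so that $A$ is an abelian real unipotent group and $q$ is a polynomial homomorphism.

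Suppose first that $q(H)\neq A$. Since $A$ is abelian, $q(H)$ is a proper \emph{normal} subgroup of $A$, so $N:=q^{-1}(q(H))=H\cdot[G,G]$ is a proper normal subgroup of $G$ containing $H$. Moreover $N$ is connected (it is the product of the connected subgroups $H$ and $[G,G]$, the latter normal), hence closed, so by Fact~\ref{equiv} it is real algebraic. Thus $N$ is the desired proper normal real algebraic subgroup of $G$.

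It remains to rule out the case $q(H)=A$, i.e.\ $H\cdot[G,G]=G$. This is forbidden by the ``non-generator'' property of the commutator subgroup of a nilpotent group: if $G$ is nilpotent and $H\leq G$ satisfies $H[G,G]=G$, then $H=G$. I would prove this by induction on the nilpotency class $c$ of $G$, the case $c=1$ being trivial. For $c\geq2$ let $Z=\gamma_c(G)$ be the last nontrivial term of the lower central series, which is central; in $\bar G:=G/Z$, of class $<c$, the commutator subgroup is the image of $[G,G]$, so $\bar H\cdot[\bar G,\bar G]=\bar G$ and the inductive hypothesis gives $HZ=G$. Since $Z$ is central, expanding an arbitrary commutator $[a,b]$ with $a,b\in G=HZ$ and cancelling the central factors yields $[G,G]=[H,H]\subseteq H$, whence $G=H[G,G]=H$. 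Applying this to our proper $H$ produces a contradiction, so $q(H)\neq A$ and we are in the first case.

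The only step that needs any care is the implication $HZ=G\Rightarrow[G,G]\subseteq H$ in the non-generator lemma, which is nevertheless a routine commutator computation using only that $Z$ is central; everything else is the elementary structure theory of simply connected nilpotent Lie groups together with Fact~\ref{equiv}, and no o-minimality enters. If one prefers to avoid passing to the abelianization, an alternative is to use that in a nilpotent group the normalizer of a proper subgroup strictly contains it, and that the normalizer of a real algebraic subgroup of $G$ is again real algebraic, to build a strictly increasing chain $H=H_0\subsetneq H_1\subsetneq\cdots\subsetneq H_k=G$ of real algebraic subgroups with $H_i$ normal in $H_{i+1}$; then $H_{k-1}$ is a proper normal real algebraic subgroup of $G$ containing $H$.
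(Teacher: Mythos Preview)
Your proof is correct, but it takes a different route from the paper's. The paper invokes \cite[Theorem~1.1.13]{nilpotent-book} to obtain a full flag
\[
\{e\}=H_0\subseteq\cdots\subseteq H_m=H\subseteq H_{m+1}\subseteq\cdots\subseteq H_n=G
\]
of real algebraic subgroups with $\dim H_{i+1}=\dim H_i+1$, and then simply notes (via \cite[Corollary~1.15]{nilpotent-book}) that the codimension-one subgroup $H_{n-1}$ is automatically normal in $G$. So the paper's argument is a two-line appeal to the Malcev basis machinery.

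Your argument instead pushes $H$ into the abelianization $G/[G,G]$ and uses the non-generator property of $[G,G]$ in a nilpotent group to show $H[G,G]\subsetneq G$; this is a purely group-theoretic argument that avoids the Malcev basis and the external references, at the cost of proving the non-generator lemma by induction on the nilpotency class. Your alternative via the normalizer condition is closer in spirit to the paper's chain argument, though the paper's chain comes from a Malcev basis rather than iterated normalizers. Both of your approaches are valid and arguably more self-contained; the paper's is shorter given the cited background.
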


\begin{proof} By \cite[Theorem 1.1.13]{nilpotent-book}, there is a chain of real algebraic
subgroups, $$\{e\}=H_0\subseteq \, \cdots \, \subseteq H=H_m\subseteq H_{m+1}\,
\subseteq \cdots \,\subseteq  H_n=G,$$ with $n=\dim G$, and $\dim H_{i+1}=\dim H_i
+1 $. It follows from \cite[Corollary 1.15]{nilpotent-book}, that $H_{n-1}$ is
normal in $G$, so we are done.
\end{proof}

\medskip
 Finally, we want to show that the collection of all cosets of real algebraic
subgroup of $G$ is itself a semi-algebraic family. By Fact~\ref{fact:exp},
$\exp:\ut(n,\RR)\to \UT(n,\RR)$ is a polynomial diffeomorphism. It induces a
bijection between the Lie subalgebras of $\ut(n,\RR)$ and the connected closed
subgroups of $\UT(n,\RR)$. Because the family of all Lie subalgebras of $\ut(n,\RR)$
is semi-algebraic we obtain:

\begin{fact}\label{all-cosets}  The family $\CF_n$ of all cosets of real algebraic subgroups of $\UT(n,\RR)$ is
semi-algebraic. Namely, there exists a semi-algebraic set $S\subseteq M_n(\RR)\times
\RR^k$, for some $k$, such that
$$\CF_n=\{P\in M_n(\RR):\exists \bar b\in \RR^k\, (P,\bar b)\in S\}.$$
\end{fact}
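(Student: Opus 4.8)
The plan is to follow the outline sketched just before the statement: transport everything through the polynomial diffeomorphism $\exp\colon\ut(n,\RR)\to\UT(n,\RR)$, reduce to the elementary semi-algebraicity of the family of Lie subalgebras of $\ut(n,\RR)$, and then adjoin a translation parameter to pass from subgroups to cosets.

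First I would record a uniform semi-algebraic parametrization of the linear subspaces of $\ut(n,\RR)$, which I identify with $\RR^N$, $N=\binom n2$. A convenient parameter space is the $N$-fold product $\ut(n,\RR)^N$: to a tuple $\bar X=(X_1,\dots,X_N)$ assign the subspace $V_{\bar X}=\operatorname{span}(X_1,\dots,X_N)$, and note that every linear subspace of $\ut(n,\RR)$ arises this way (pad a spanning set with zeros). The relation ``$Y\in V_{\bar X}$'' is semi-algebraic (by quantifier elimination in the real field, or explicitly via vanishing of suitable minors), hence so is the set of $\bar X$ for which $V_{\bar X}$ is closed under the bracket $[X,Y]=XY-YX$, namely $\{\bar X:[X_i,X_j]\in V_{\bar X}\text{ for all }i,j\}$. (If one prefers the fibers to be honest subspaces rather than spanning tuples, one may instead use the semi-algebraic map sending $\bar X$ to the orthogonal projection onto $V_{\bar X}$, defined on each constant-rank locus; this is routine.) So the family $\mathcal A_n$ of Lie subalgebras of $\ut(n,\RR)$ is semi-algebraic.

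Next I would push this forward by $\exp$. By Fact~\ref{fact:exp} the map $\exp\colon\ut(n,\RR)\to\UT(n,\RR)$ is a polynomial diffeomorphism with polynomial inverse $\log$, and, as recalled just above the statement, $W\mapsto\exp(W)$ is a bijection between the Lie subalgebras of $\ut(n,\RR)$ and the connected closed subgroups of $\UT(n,\RR)$, which by Fact~\ref{equiv} are exactly the real algebraic subgroups. Composing the parametrization of $\mathcal A_n$ with the polynomial maps $\exp,\log$ --- concretely, replacing the condition ``$Y\in V_{\bar X}$'' by ``$\log Y\in V_{\bar X}$'' and letting the candidate group element range over $\UT(n,\RR)$ --- produces a semi-algebraic set whose fibers (over the parameter) are precisely the real algebraic subgroups of $\UT(n,\RR)$. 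Finally, since left translation by an invertible matrix is polynomial, adjoining a parameter $g\in\UT(n,\RR)$ and replacing each subgroup $H$ by $gH$ (i.e.\ imposing ``$\log(g^{-1}P)\in V_{\bar X}$'') yields a semi-algebraic set $S\subseteq M_n(\RR)\times\RR^k$ whose fibers are exactly the cosets of real algebraic subgroups of $\UT(n,\RR)$, as required.

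I do not expect a genuine obstacle here: the whole argument is just closure of the semi-algebraic category under composition with polynomial maps and under first-order operations. The only point needing a little care is the uniform parametrization of subspaces of all dimensions at once --- using spanning tuples or projection matrices rather than bases of a fixed size --- and checking that the bracket-closure and subgroup conditions transfer cleanly across $\exp$ and $\log$.
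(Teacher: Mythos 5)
Your argument is correct and is essentially the paper's own: the paper justifies this fact by exactly the observation that $\exp$ is a polynomial diffeomorphism inducing a bijection between Lie subalgebras of $\ut(n,\RR)$ and real algebraic subgroups of $\UT(n,\RR)$, together with the semi-algebraicity of the family of Lie subalgebras; you have simply filled in the routine details (spanning-tuple parametrization, bracket closure, and the translation parameter for cosets).
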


In fact, by Definable Choice, we may choose the above family so that every coset is
represented exactly once.

\subsection{Maps between real unipotent groups}\label{sec:maps-between-real}

\begin{defn} Let $G$ be a real unipotent group. A map $f:\RR^d\to G$ is called \emph{polynomial} if, when we view $G$ as
a subset of $\RR^{n^2}$, the coordinate functions of $f$ are real polynomials in
$x_1,\ldots, x_d$.  A map $f:G\to \RR^d$ is \emph{polynomial} if $f$ is the
restriction to $G$ of a polynomial map from $\RR^{n^2}$ into $\RR^d$.
\end{defn}

We note:
\begin{lem}\label{homom2} \begin{enumerate}
\item If $G_1$ and $G_2$ are real unipotent groups and $f:G_1\to G_2$ is a Lie
homomorphism then $f$ is a polynomial map. \item Let $G$ be a real unipotent group
and $v$ an arbitrary element in its Lie algebra $\mathfrak g\subseteq \ut(n,\RR)$. If
$p:\RR^d\to \RR$ is a polynomial function then $f(\bar x)=\exp_G(p(\bar x)v)$ is a
polynomial map from $\RR^d$ into $G$.
\end{enumerate}
\end{lem}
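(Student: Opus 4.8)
The plan is to treat the two parts separately, with part (2) being the simpler warm-up. For part (2), observe that $\exp_G$ is the restriction of the matrix exponential $\exp\colon \ut(n,\RR)\to\UT(n,\RR)$, which by Fact~\ref{fact:exp} is given by polynomials in the matrix entries (this uses nilpotency: the exponential series terminates). Since $v\in\g$ is a fixed matrix and $p(\bar x)$ is a polynomial, the matrix $p(\bar x)v$ has entries that are polynomials in $\bar x$, and composing with the polynomial map $\exp$ yields a map $\RR^d\to\UT(n,\RR)$ whose entries are polynomials in $\bar x$. As $p(\bar x)v\in\g$ for all $\bar x$, the image lies in $G$, so $f$ is a polynomial map from $\RR^d$ into $G$ in the sense of the preceding definition. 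That is essentially all there is to it.

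For part (1), let $f\colon G_1\to G_2$ be a Lie homomorphism. The idea is to transfer $f$ to the Lie algebra level via the exponential maps and use that a Lie homomorphism is determined by its differential, which is a linear — hence polynomial — map. Concretely, let $\g_1,\g_2$ be the Lie algebras of $G_1,G_2$, realized inside $\ut(n_1,\RR)$ and $\ut(n_2,\RR)$ respectively, and let $df_e\colon\g_1\to\g_2$ be the differential of $f$ at the identity, a linear map (so certainly polynomial in the matrix entries). Since $G_1$ is connected, simply connected and nilpotent, the exponential maps $\exp_{G_1}$ and $\exp_{G_2}$ are diffeomorphisms (Remark~\ref{rem:exp}), and the standard naturality of $\exp$ for Lie homomorphisms gives $f\circ\exp_{G_1}=\exp_{G_2}\circ\, df_e$. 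Therefore $f=\exp_{G_2}\circ\, df_e\circ\log_{G_1}$ as maps $G_1\to G_2$. Now $\log_{G_1}\colon G_1\to\g_1$ is polynomial (Remark~\ref{rem:exp}), $df_e$ is linear hence polynomial, and $\exp_{G_2}\colon\g_2\to G_2$ is polynomial (Fact~\ref{fact:exp}); composing three polynomial maps gives a polynomial map. Hence $f$ agrees on $G_1$ with a polynomial map from $\RR^{n_1^2}$ into $\RR^{n_2^2}$, which is exactly the assertion.

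The only point that deserves care — and the step I would expect to be the main (minor) obstacle — is justifying the naturality identity $f\circ\exp_{G_1}=\exp_{G_2}\circ\, df_e$ in this purely Lie-theoretic setting, i.e.\ that a continuous (equivalently smooth) group homomorphism between these Lie groups does commute with the exponential maps via its differential. This is classical: for $v\in\g_1$, the curve $t\mapsto f(\exp_{G_1}(tv))$ is a one-parameter subgroup of $G_2$ with initial velocity $df_e(v)$, and one-parameter subgroups of a Lie group are exactly the curves $t\mapsto\exp_{G_2}(tw)$; uniqueness of the one-parameter subgroup with given initial velocity forces $f(\exp_{G_1}(tv))=\exp_{G_2}(t\,df_e(v))$, and setting $t=1$ gives the claim. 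One should also remark at the outset that a Lie homomorphism is in particular smooth, so $df_e$ makes sense; in the real-analytic/algebraic category at hand this is automatic. With this identity in hand the rest is the routine composition-of-polynomials observation already used in part (2).
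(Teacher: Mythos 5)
Your proposal is correct and follows essentially the same route as the paper: part (2) is the composition of the polynomial map $\bar x\mapsto p(\bar x)v$ with the polynomial exponential, and part (1) writes $f=\exp_{G_2}\circ\, df\circ\log_{G_1}$ and invokes polynomiality of $\exp$ and $\log$ from Fact~\ref{fact:exp}. Your extra justification of the naturality identity via one-parameter subgroups is a correct elaboration of what the paper dismisses as ``standard Lie theory.''
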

\begin{proof}
(1) By standard Lie  theory we have $f=\exp_{G_2}\co\, df \co \log_{G_1},$ where
$df:\mathfrak g_1\to \mathfrak g_2$ is a linear map. Since $\log_{G_1}$ and
$\exp_{G_2}$ are polynomials, $f$ is polynomial as well.

(2) By Fact~\ref{fact:exp}, the map $\exp:\ut(n,\RR)\to \UT(n,\RR)$ is polynomial,
and $\exp_G$ is its restriction to $\mathfrak g$ is clearly polynomial as well. The
map $f:\RR^d\to G$ is thus a composition of polynomial maps. \end{proof}

\subsection{Model theoretic preliminaries}
We use the same set-up as in \cite[Section 2]{o-minflows}. We refer to \cite{omin}
and \cite{CM} for introductory material on o-minimal structures, as well as
examples. We let
$$\CL_{\mathrm{sa}}=\la +,-,\cdot,<,0,1\ra$$ be the language of
ordered rings (as the subscript suggests,  the definable sets in the ordered field
$\RR$ are the semialgebraic sets). We let $\CL_{\mathrm{om}}\supseteq
\CL_{\mathrm{sa}}$ be the language of our o-minimal structure $\Rom$.  We let
$\CL_{\mathrm{full}}$ be the language
 in
which every subset of $\RR^n$ has a predicate symbol, and let $\RR_{\mathrm{full}}$
be the corresponding structure on $\RR$. Clearly, every $\Rom$-definable set is also
$\RR_{\mathrm{full}}$-definable.

All definable sets are definable \emph{with parameters}. The dimension of a
definable set in an o-minimal structure is defined using the cell decomposition
theorem. In our setting it is enough to know that an  $\Rom$-definable $X\subseteq \RR^n$ has
dimension $k$ if and only if it can be decomposed into finitely many
$C^1$-submanifolds of $\RR^n$, whose maximal dimension is $k$.

\subsubsection{Elementary extensions and some valuation theory}\label{standard-part}
We let $\mathfrak R_{\mathrm{full}}=\la \mathfrak R,\ldots\ra$
 be an elementary extension of $\RR_{\mathrm{full}}$ which is
 $|\RR|^+$-saturated,
{or alternatively an ultra-power of
 $\RR_{\mathrm{full}}$ with respect to an appropriate ultrafilter.

 We let $\mathfrak R_{\mathrm{om}}$ and $\mathfrak R$ be
reducts of $\mathfrak R_{\mathrm{full}}$ to the languages $\CL_{\mathrm{om}}$ and
$\CL_{\mathrm{sa}}$, respectively. Given any set $X\subseteq \RR^n$, we denote by
$X^\sharp=X(\mathfrak R)$ its realization in $\mathfrak R_{\mathrm{full}}$. We use roman
letters $X,Y,Z$ etc.\  to denote subsets of $\RR^n$ and script letters $\mathcal X,
\mathcal Y,\mathcal Z$ to denote subsets of $\mathfrak R^n$ that are
not necessarily  of the form
$X^\sharp$ for some $X\subseteq \RR^n$.

The underlying field  $\la \mathfrak R;+,\cdot \ra$ of $\mathfrak
  R_{\mathrm{full}}$ is real closed and we let
 $$\CO(\mathfrak R)=\{\alpha \in \mathfrak R: \exists n\in \mathbb N\,\,
 |\alpha|<n\}.$$
It is  a valuation ring  of $\mathfrak R$ and  its the maximal ideal
  $\mu(\mathfrak{R})$ is the set of infinitesimal elements, namely
$$\mu(\mathfrak R)=\{\alpha\in \mathfrak R:\forall n\in \mathbb N\, |\alpha|<1/n\}.$$

Mostly, for a linear real algebraic  $G$, we shall use a group variant $\mathcal O(G)$
and $\mu(G)$ of the above, defined as follows.
 Because $G$ is a closed subset of $\operatorname{GL}(n,\RR)$, it can be viewed as a closed subset of
 $\RR^{n^2}$,
 and then $G^\sharp$ is a subset  of $\mathfrak R^{n^2}$. In the definitions below we let $I$ denote the identity
 matrix and use $+$ for the usual addition in $\RR^{n^2}$.

 We let
$$\CO(G)=\CO(\mathfrak R)^{n^2}\cap G^\sharp\,\,\mbox{ and } \mu(G)=(I+\mu(\mathfrak R))\cap
G^\sharp.$$
Both $\CO(G)$ and $\mu(G)$ are subgroups of $G^\sharp$, and $\mu(G)$ is normal
in $\CO(G)$. In fact $\CO(G)$ is a semi-direct product of $\mu(G)$ and $G$, so
given $\beta\in \CO(G)$ there exists a unique $b\in G$ such that
$$\beta\in \mu(G)b=b\mu(G).$$ We call  $b$ \emph{the standard part of $\beta$},
denoted as $b=\st(\beta)$. The map $\st:\CO(G)\to G$ is a surjective group
homomorphism whose kernel is $\mu(G)$. It coincides with the the standard part map
on $\CO({\mathfrak R})^{n^2}$, when restricted to $\CO(G)$. We thus have, for
$g=(g_{i,j})_{1\leq  i,j\leq n}\in G^\sharp$,
$$g\in \CO(G)\Leftrightarrow \forall i,j, \, \,g_{i,j}\in \CO(\mathfrak
R)\Leftrightarrow |g|\in \CO(\mathfrak R),$$ where $|g|$ is the Euclidean norm
computed in $\mathfrak R^{n^2}$.

 For $\mathcal X\subseteq G^\sharp$, we let
$$\st(\mathcal X):=\st(\mathcal X\cap \CO(G)).$$
When our setting is clear we shall omit $G$ from the notation and use $\CO$ and
$\mu$ instead.

\medskip
We shall be using extensively the following simple observation:
\begin{fact}\label{standard}If $X\subseteq G$ is an arbitrary set then
$\cl(X)=\st(X^\sharp)$. In particular, if $\Gamma\subseteq G$ is a subgroup then
$$\cl(X\Gamma)=\st(X^\sharp\Gamma^\sharp).$$
\end{fact}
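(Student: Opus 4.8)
The plan is to reduce the ``in particular'' clause to the first assertion and to prove the first assertion by a standard nonstandard-analysis argument, using the $|\RR|^+$-saturation of $\mathfrak R_{\mathrm{full}}$ together with the fact, recorded just before the statement, that $\CO(G)$ is the semidirect product of $\mu(G)$ and $G$; this last fact is precisely what guarantees that the group-theoretic map $\st\colon\CO(G)\to G$ agrees on $\CO(G)$ with the coordinatewise standard part on $\CO(\mathfrak R)^{n^2}$ (it follows from the standard part being a ring homomorphism $\CO(\mathfrak R)\to\RR$ and from $G$ being cut out by polynomials with real coefficients). For $\cl(X)\subseteq\st(X^\sharp)$, fix $a\in\cl(X)\subseteq\RR^{n^2}$ and consider the partial type $\Sigma(x)=\{x\in X^\sharp\}\cup\{|x-a|<1/k:k\in\NN\}$ over the finitely many standard parameters in $a$ (note $X^\sharp$ is $0$-definable in $\mathfrak R_{\mathrm{full}}$, $X$ being a predicate of $\CL_{\mathrm{full}}$). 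Since $a\in\cl(X)$, the sentence $\exists x\,(x\in X\wedge|x-a|<1/k)$ holds in $\RR_{\mathrm{full}}$ for each $k$, hence in $\mathfrak R_{\mathrm{full}}$, so $\Sigma$ is finitely satisfiable and thus realized by some $\xi\in\mathfrak R^{n^2}$. Then $\xi\in X^\sharp\subseteq G^\sharp$ and $\xi-a$ is infinitesimal coordinatewise; as $a$ is standard, $\xi\in\CO(\mathfrak R)^{n^2}\cap G^\sharp=\CO(G)$ with $\st(\xi)=a$, so $a\in\st(X^\sharp)$.

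For the reverse inclusion $\st(X^\sharp)\subseteq\cl(X)$, let $a=\st(\xi)$ with $\xi\in X^\sharp\cap\CO(G)$, so $|\xi-a|<\varepsilon$ for every standard $\varepsilon>0$. The sentence $\exists x\,(x\in X^\sharp\wedge|x-a|<\varepsilon)$ holds in $\mathfrak R_{\mathrm{full}}$, witnessed by $\xi$, hence by elementarity in $\RR_{\mathrm{full}}$, producing $x\in X$ with $|x-a|<\varepsilon$; since $\varepsilon$ was arbitrary, $a\in\cl(X)$. This gives $\cl(X)=\st(X^\sharp)$, where closure of $X$ in $G$ equals closure in $\RR^{n^2}$ since $G$ is closed, and $\st(X^\sharp)\subseteq G$ holds automatically. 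For the ``in particular'' statement I would observe that transfer commutes with group multiplication: matrix multiplication is polynomial, hence definable, so for $A,B\subseteq G$ the set $AB$ is defined in $\RR_{\mathrm{full}}$ by $\exists a\,\exists b\,(A(a)\wedge B(b)\wedge z=a\cdot b)$, and since $AB$ carries its own predicate in $\CL_{\mathrm{full}}$ this equivalence transfers to $\mathfrak R_{\mathrm{full}}$, yielding $(AB)^\sharp=A^\sharp B^\sharp$. Applying this with $A=X$, $B=\Gamma$ and then the first part to the set $X\Gamma$ gives $\cl(X\Gamma)=\st\bigl((X\Gamma)^\sharp\bigr)=\st(X^\sharp\Gamma^\sharp)$.

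I do not anticipate a genuine obstacle; the proof is entirely routine. The only points requiring care are the identification of the group-theoretic standard part with the coordinatewise one on $\CO(G)$ (which is exactly the semidirect-product statement quoted above), and the bookkeeping that lets one pass freely between $X^\sharp$, $(X\Gamma)^\sharp$ and $X^\sharp\Gamma^\sharp$ via quantifier transfer in the structure $\RR_{\mathrm{full}}$, where every set and every set-theoretic operation on subsets of $\RR^n$ is available as a $0$-definable object.
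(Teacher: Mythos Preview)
Your argument is correct and is exactly the standard nonstandard-analysis proof one would expect. The paper, however, gives no proof at all: it labels the statement a \emph{Fact}, calls it a ``simple observation,'' and moves on, so there is nothing to compare your approach against.
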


\subsubsection{Types}  If $\CL_{\bullet}$ is any of our languages then
\emph{an $\CL_{\bullet}$-type} $p(x)$ over $\RR$ is a consistent collection of
$\CL_{\bullet}$-formulas with free variables $x$ and parameters in $\RR$, or
equivalently, a collection of sets defined by  $\CL_{\bullet}$-formulas, such that
the intersection of any finitely many of them is non-empty.  When $p(x)$ contains a
formula saying $x\in X$ then we write $p\vdash X$ and say that \emph{$p$ is a type
on $X$}.

An $\CL_{\bullet}$-type $p(x)$ is \emph{complete} if for every
$\CL_{\bullet}$-definable $X\subseteq \RR^n$, where $n=length(x)$, either $X$ or its
complement belongs to $p$.  For $p(x)$ an $\CL_{\bullet}$-type over $\mathbb R$, we
denote by $p(\mathfrak R)$ its realization in $\mathfrak R$, namely the intersection
of all $X^\sharp$, for $X\in p$.

Given $\alpha \in {\mathfrak R}^n$, we let $\tp_{\bullet}(\alpha/\RR)$ be the collection of all
$\CL_{\bullet}$-definable subsets  $X\subseteq \RR^n$ with $\alpha\in X^\sharp$. It is easily seen
to be a complete type.

For $G$ a real unipotent group,  we denote by $S_G(\RR)$ the collection of all
complete $\CL_{\mathrm{om}}$-types $p$ over $\RR$ such that $p\vdash G$.

 Finally, if $p\in S_G(\RR)$, then we let $\mu\cdot p$ be the
(partial) type whose realization is $\mu(G) \, p(\mathfrak R)$. The type $\mu \cdot
p$ is not a complete type, and we call it \emph{a $\mu$-type}. We identify two
$\mu$-types $\mu\cdot p,\mu\cdot q$ if $\mu(G)\, p(\mathfrak R)=\mu(G)\, q(\mathfrak
R)$. The group $G$  acts on the set of all $\mu$-types on the left, since $g\cdot
(\mu\cdot p) =\mu\cdot (g\cdot p)$. See \cite{mustab} for all the above.

The following definition and subsequent theorem, from \cite{mustab}, will play a
significant role in our proof. Given $p\in S_G(\RR)$, we let
$$\Stab^\mu(p)=\{g\in G(\RR): g\cdot (\mu \cdot p)=\mu\cdot p\}.$$ It is easy to see that
$g\in \Stab^\mu(p)$ if and only if $g$ leaves the set
$(\mu\cdot p)(\mathfrak R)$ invariant, when acting on the left.

The next fact follows from\cite{mustab}.
\begin{fact}\label{mustab-thm}
For every   $p\in S_G(\RR)$, the group $\Stab^\mu(p)$ is
 $\CL_{\mathrm{om}}$-definable over $\RR$. Moreover, if $p$ is
unbounded (namely, $p(\mathfrak R)$ is not contained in $\CO(G)$) then $\dim
(\Stab^\mu(p))>0$.
\end{fact}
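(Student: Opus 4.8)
The plan is to establish the two assertions in turn, after reductions common to both.

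\emph{Reductions.} First, $\Stab^\mu(p)$ is a subgroup of $G(\RR)$, being the setwise left-stabiliser of $(\mu\cdot p)(\mathfrak R)=\mu(G)\,p(\mathfrak R)$. For a \emph{standard} $g\in G(\RR)$ one has $g\,\mu(G)\,g^{-1}=\mu(G)$: writing $g(I+\eta)g^{-1}=I+g\eta g^{-1}$, each entry of $g\eta g^{-1}$ is a finite sum of products of bounded entries of $g,g^{-1}$ with infinitesimal entries of $\eta$, hence infinitesimal. Therefore $g\,\mu(G)\,p(\mathfrak R)=\mu(G)\,(g\cdot p)(\mathfrak R)$, so calling $q_1,q_2\in S_G(\RR)$ \emph{$\mu$-equivalent} when $\mu(G)q_1(\mathfrak R)=\mu(G)q_2(\mathfrak R)$ (equivalently: every realisation of one lies in the left $\mu(G)$-coset of a realisation of the other), we are reduced to showing that $\{g\in G(\RR):g\cdot p\text{ is }\mu\text{-equivalent to }p\}$ is $\Rom$-definable over $\RR$, of positive dimension when $p$ is unbounded.

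\emph{Definability.} I would first check that $\mathcal S:=\mu(G)\,p(\mathfrak R)$ is type-definable over $\RR$. It is $\mathrm{Aut}(\mathfrak R/\RR)$-invariant — that group fixes $\RR$, hence fixes the $\RR$-type-definable set $\mu(G)$, and permutes $p(\mathfrak R)$ — so it is a union of sets $r(\mathfrak R)$, $r\in S_G(\RR)$; and $\{r:r(\mathfrak R)\subseteq\mathcal S\}$ is closed in the Stone space, because if $b\notin\mathcal S$ the partial type ``$x\in\mu(G)b$'' together with $p$ is inconsistent, so by compactness already some single $\Rom$-formula $\psi(y)$ over $\RR$ holds of $b$ with $\psi(y')\Rightarrow y'\notin\mathcal S$. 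Thus $\mathcal S=\bigcap_k Y_k^\sharp$ for suitable $\RR$-definable $Y_k$, and for standard $g$ the inclusion $g\cdot\mathcal S\subseteq\mathcal S$ amounts to: for each $j$ there is a finite $F$ with $\bigcap_{k\in F}Y_k\subseteq g^{-1}Y_j$ (by saturation), each clause being an $\Rom$-condition on $g$. So $\Stab^\mu(p)$ is a subgroup of $G(\RR)$ presented as an intersection of directed unions of $\Rom$-definable sets. The crux — and, I expect, the genuinely hard step — is to collapse this to a single $\Rom$-formula; for this I would run cell decomposition and Definable Choice on the $\RR$-definable families of translates $g^{-1}Y$ and their $\mu$-closures, exploiting that these vary definably once one passes to a compactification of $G\subseteq\RR^{n^2}$ that records asymptotic \emph{rates} along curves running inside $p$, not merely their endpoints. (A purely topological compactification such as $x\mapsto(x_i/\sqrt{1+x_i^2})_i$ is too coarse: for $G=(\RR^2,+)$ and $p=\tp((N,N^2)/\RR)$ one gets $\Stab^\mu(p)=\{0\}\times\RR$, invisible at the level of endpoints, while o-minimal curve selection supplies the needed lower-order expansions.) This finiteness is essentially the content of \cite{mustab}.

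\emph{Positive dimension when $p$ is unbounded.} Now $p(\mathfrak R)\not\subseteq\CO(G)$; fix $a\models p$ with $|a|$ infinite, so $w:=\log_G a\in\mathfrak g^\sharp$ has $|w|$ infinite ($\log_G$ being a polynomial diffeomorphism), and set $v_0:=\st(w/|w|)\in\mathfrak g$, a vector of Euclidean norm $1$. Then $V:=\exp_G(\RR v_0)$ is a one-parameter subgroup of $G$ (since $[v_0,v_0]=0$, $\exp_G(sv_0)\exp_G(tv_0)=\exp_G((s+t)v_0)$), and I claim $V\subseteq\Stab^\mu(p)$; as $V$ is infinite and $\Stab^\mu(p)$ is, by the previous part, an $\Rom$-definable subgroup of the torsion-free group $G(\RR)$, this yields $\dim\Stab^\mu(p)>0$. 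By $\mathrm{Aut}(\mathfrak R/\RR)$-invariance of $\mathcal S$ it suffices to show, for each $t\in\RR$, that $\exp_G(tv_0)\,a$ lies in the left $\mu(G)$-coset of \emph{some} realisation of $p$ (other realisations follow by applying an automorphism fixing $\RR$, and the reverse inclusion by replacing $t$ with $-t$). To produce that realisation I would take, via o-minimal curve selection, an $\RR$-definable curve $\gamma\colon(0,\varepsilon)\to G$ with $|\gamma(s)|\to\infty$ as $s\to0^+$ and $\gamma(s)\models p$ for every infinitesimal $s>0$, so that $a=\gamma(s_0)$ for some infinitesimal $s_0$; writing $\log_G\gamma(s)=\lambda(s)v(s)$ with $\lambda(s)=|\log_G\gamma(s)|\to\infty$ and $v(s)\to v_0$ on the unit sphere, I would find an infinitesimal $s_1\approx s_0$ with $\exp_G(tv_0)\gamma(s_0)\in\mu(G)\gamma(s_1)$ — that is, absorb the left translation into an infinitesimal reparametrisation of the scale $\lambda$. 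Verifying this is the technical heart: one must control the Baker--Campbell--Hausdorff corrections in $\exp_G(tv_0)\exp_G(\lambda(s_0)v(s_0))$, whose bracket terms such as $\lambda(s_0)[v_0,v(s_0)-v_0]$ are a priori unbounded but get matched by the lower-order terms of $\log_G\gamma(s_1)$ precisely because $v_0$ is the \emph{limiting} direction of $\log_G\gamma(s)$ — the asymptotic expansions available in $\Rom$ making this bookkeeping possible. (The mechanism already appears in the Heisenberg group with $a=(N,N,0)$: there $v_0$ spans the centre, and $(0,0,c)\cdot a=(N,N,c)$ lies in the $\mu(G)$-coset of $(N-c/N,\;N-c/N,\;0)\models p$.)
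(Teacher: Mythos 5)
This statement is a fact imported from \cite{mustab}; the present paper gives no proof of it, so your argument can only be judged on its own terms, and it has two genuine gaps.

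First, the definability clause. Your reduction exhibits $\Stab^\mu(p)$ as an infinite conjunction (over $j$) of infinite disjunctions (over finite $F$) of $\Rom$-conditions on $g$, and you then write that collapsing this to a single formula ``is essentially the content of \cite{mustab}''. But that collapse \emph{is} the theorem: an arbitrary $\bigwedge\bigvee$-definable subgroup of $G(\RR)$ need not be definable, and nothing o-minimal has been used up to that point. The route actually taken in \cite{mustab} (and reused in Lemma~\ref{definability} of this paper) is to note that $\Stab^\mu(p)$ is definable in the tame pair $\la \mathfrak R_{\mathrm{om}},\Rom\ra$ --- since $\mu(G)$ and the standard-part map are pair-definable --- and then to invoke \cite[Proposition~8.1]{lou-limit}, which guarantees that pair-definable subsets of $\RR^n$ are already $\Rom$-definable. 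Your proposed substitute (cell decomposition on a compactification ``recording asymptotic rates'') is neither carried out nor obviously workable.

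Second, and more seriously, the positive-dimension argument rests on a false claim. Take $G=\UT(3,\RR)$ with $X=E_{12}$, $Y=E_{23}$, $Z=E_{13}$, $[X,Y]=Z$, and let $p=\tp_{\mathrm{om}}(\alpha/\RR)$ for $\alpha=\exp_G(NX+N^2Y)$ with $N>\RR$. Then $p(\mathfrak R)=\{\exp_G(MX+M^2Y):M>\RR\}$, $\log_G\alpha=(N,N^2,0)$, and your $v_0$ is $Y$. However $\exp_G(tY)\notin\Stab^\mu(p)$ for $t\neq 0$: by Baker--Campbell--Hausdorff, $\exp_G(tY)\,\alpha=\exp_G(NX+(N^2+t)Y-\tfrac{tN}{2}Z)$, and for any candidate $\beta=\exp_G(MX+M^2Y)$ one computes $(\exp_G(tY)\alpha)\beta^{-1}=\exp_G\bigl((N-M)X+(N^2+t-M^2)Y+\zeta Z\bigr)$ with $\zeta=-\tfrac{tN}{2}+\tfrac{M(N^2+t-NM)}{2}$. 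Forcing the $X$- and $Y$-coefficients to be infinitesimal gives $M=N+\tfrac{t}{2N}(1+o(1))$, whence $\zeta=-\tfrac{tN}{4}(1+o(1))$ is infinite. (In this example $\Stab^\mu(p)$ is the centre $\exp_G(\RR Z)$, which is not the limiting direction of $\log_G\alpha$.) So the BCH corrections you defer to ``the technical heart'' are a genuine obstruction, not bookkeeping: the limiting unit direction of $\log_G$ along an unbounded type need not lie in the $\mu$-stabilizer, and a different mechanism is required to produce a positive-dimensional subgroup.
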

 Let us see how \cite{mustab} implies the above: Since every type over $\RR$ is definable, $p$ is definable and, by
    \cite[Claim~3.4]{mustab}, there is a definable reduced type $q$
    with $\mu{\cdot} q=\mu {\cdot} p$.  Now use \cite[Theorem~1.1]{mustab}.

The above theorem holds for arbitrary definable  groups in o-minimal structures, and
then  $\Stab^\mu(p)$ is always torsion-free. However,  when $G$ is a
linear real algebraic group  then necessarily $\Stab^\mu(p)$ is real algebraic, even if
the type $p$ is in a richer language.

\section{The nearest coset of a type}\label{sec-nearest}
The goal of this section is to prove that to each complete $\CL_{\mathrm{om}}$-type
$p$ on a real unipotent group $G$  one can associate a coset $gH$ of a real
algebraic subgroup $H\subseteq G$, which is ``nearest'' to $p$ in a precise sense.

Recall that below we are using $H,G$ etc.\   to denote the $\RR$-points of real
groups, and use $H^\sharp, G^\sharp$ etc to denote $\mathfrak R$-points of the same groups.

\begin{defn} For $G$ a linear real  group, $\alpha\in G^\sharp$, $g\in G$,  and $H\subseteq G$ a
real algebraic subgroup, we say that \emph{$gH$ is near $\alpha$} if $\alpha \in
\mu(G)\, gH^\sharp$.
\end{defn}

Note that  there exists $g\in G$ such that
$gH$ is near $\alpha$ if and only if $\alpha\in \CO(G)\, H^\sharp$. Also, if
$\tp_{\mathrm{sa}}(\alpha/\RR)=\tp_{\mathrm{sa}}(\beta/\RR)$ then $gH$ is near
$\alpha$ if and only if $gH$ is near $\beta$. Our ultimate goal is to
show that in unipotent groups
there exists a minimal coset near  $\alpha$.

\begin{lem}\label{flat-lemma}Let $G$ be a linear real algebraic  group and let
 $H, N\subseteq G$ be real algebraic subgroups with
$N$ normal in $G$. Assume that $\alpha \in H^\sharp$ and there is $b\in G$ such that
the coset $bN$ is near $\alpha$. Then $bN\cap H\neq \emptyset$ and the coset $bN\cap
H$  is near $\alpha$ as well.
\end{lem}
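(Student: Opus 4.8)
The plan is to push everything through the quotient modulo $N$. Since $N$ is normal in $G$, the quotient map $\pi\colon G\to\bar G:=G/N$ is a polynomial homomorphism of real unipotent groups (Lemma~\ref{homom2}, viewing $\bar G$ as real unipotent as in \S\ref{sec-alternative}), with kernel $N$; being polynomial and fixing the identity it carries $\mu(G)$ into $\mu(\bar G)$. I would first record the sharper fact that $\pi(\mu(G))=\mu(\bar G)$, so that $\pi^{-1}(\mu(\bar G))=\mu(G)N^\sharp$: writing $\pi=\exp_{\bar G}\co d\pi\co\log_G$ and using that $\log_G$ maps $\mu(G)$ onto the infinitesimal elements of $\g^\sharp$, that $\exp_{\bar G}$ maps the infinitesimal elements of $\bar\g^\sharp$ onto $\mu(\bar G)$, and that an $\RR$-linear section of the linear surjection $d\pi\colon\g\to\bar\g$ sends infinitesimal vectors to infinitesimal vectors, the nontrivial inclusion drops out. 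Applied verbatim to $\pi\rest H\colon H\to\bar G$ — whose kernel is $N\cap H$ and whose image $HN/N$ is a real algebraic (hence closed) subgroup of $\bar G$ — this shows that $\pi\rest H$ maps $\mu(H)$ onto $\mu(HN/N)$, so $(\pi\rest H)^{-1}(\mu(HN/N))=\mu(H)(N\cap H)^\sharp$ inside $H^\sharp$.

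For the nonemptiness I would argue as follows. The hypothesis that $bN$ is near $\alpha$ means $\alpha=mbn$ for some $m\in\mu(G)$ and $n\in N^\sharp$; applying $\pi$ and using $\pi(n)=\bar e$ gives $\pi(b)=\pi(m)^{-1}\pi(\alpha)\in\mu(\bar G)\,\pi(H)^\sharp$, since $\alpha\in H^\sharp$ and $\pi$ is semialgebraic so $\pi(\alpha)\in\pi(H)^\sharp$. Writing $\pi(b)=\bar m\bar h$ with $\bar m\in\mu(\bar G)$ and $\bar h\in\pi(H)^\sharp$, we get $\bar h=\bar m^{-1}\pi(b)\in\CO(\bar G)$ with $\st(\bar h)=\pi(b)$ (because $\st$ is a homomorphism on $\CO(\bar G)$, $\st(\bar m)=\bar e$, and $\pi(b)$ is standard); since $\pi(H)$ is closed, Fact~\ref{standard} then forces $\pi(b)=\st(\bar h)\in\pi(H)$, i.e.\ $b\in\pi^{-1}(\pi(H))=HN$. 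Writing $b=h_0n_0$ with $h_0\in H$ and $n_0\in N$ yields $h_0\in bN\cap H$, so $bN\cap H\neq\emptyset$ and $bN\cap H=h_0(N\cap H)$.

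To see that this coset is near $\alpha$, i.e.\ that $\alpha\in\mu(G)\,h_0(N\cap H)^\sharp$, I would set $\gamma:=h_0^{-1}\alpha$ and compute, from $\alpha=mbn=m\,h_0\,(n_0n)$, that $\gamma=(h_0^{-1}mh_0)(n_0n)$; here $h_0^{-1}mh_0\in\mu(G)$ (as $h_0\in G\subseteq\CO(G)$ normalizes $\mu(G)$) and $n_0n\in N^\sharp$, so $\gamma\in\mu(G)N^\sharp$, while also $\gamma\in H^\sharp$. Hence $(\pi\rest H)(\gamma)=\pi(\gamma)\in\pi(\mu(G)N^\sharp)=\pi(\mu(G))=\mu(\bar G)$, and being in $\pi(H)^\sharp$ as well it lies in $\mu(\bar G)\cap\pi(H)^\sharp=\mu(HN/N)$; so by the first paragraph $\gamma\in\mu(H)(N\cap H)^\sharp\subseteq\mu(G)(N\cap H)^\sharp$, whence $\alpha=h_0\gamma\in\mu(G)\,h_0(N\cap H)^\sharp$, as wanted.

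The only step I expect to require genuine work is the identity $\pi(\mu(G))=\mu(\bar G)$ (and its instance for $\pi\rest H$) — equivalently, the assertion that the infinitesimal translate witnessing ``$bN$ near $\alpha$'' can be chosen inside $N\cap H$ rather than merely inside $G$; the rest is formal bookkeeping with the homomorphism $\pi$ and the closedness of real algebraic subgroups. Note that the normality of $N$ enters only to provide the quotient $\bar G=G/N$ and the map $\pi$, and that no hypothesis on $H$ is used beyond its being a real algebraic subgroup.
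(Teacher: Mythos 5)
Your proof is correct, but it runs along a genuinely different track from the paper's. The paper stays inside $G$: it first observes $b=\st(\alpha n^{-1})\in NH$ (so one may assume $G=NH$), then proves the decomposition $\mu(NH)=\mu(N)\,\mu(H)=\mu(H)\,\mu(N)$ via the submersivity of $(x,y)\mapsto xy\colon N\times H\to G$ at $(e,e)$, and finally shuffles the infinitesimal and the $N$-part past the $H$-part by an explicit computation that uses the normality of $N$ to commute elements. You instead push everything through the quotient $\pi\colon G\to G/N$, and your key input is that $\pi\rest H$ maps $\mu(H)$ \emph{onto} $\mu(HN/N)$ — an instance of what the paper only records later as Lemma~\ref{homom1}(1) — which you establish independently via $\exp$, $\log$ and a real linear section of $d(\pi\rest H)$. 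The two key facts are the same local-surjectivity-at-the-identity phenomenon in different clothing (the paper's submersion claim is essentially the submersivity of $\pi\rest H$ at $e$), so neither route is circular and the "genuine work" is identical in substance. What each buys: your version makes the role of normality transparent (it enters only to form the quotient, as you note) and reduces the coset bookkeeping to formal pullback along $\pi$, at the cost of identifying $G/N$ and $HN/N$ with real unipotent groups so that $\mu$, $\CO$, $\st$ and Fact~\ref{standard} apply there (justified by Section~\ref{sec-alternative} and Lemma~\ref{homom2}, and by the closedness of $HN$ in $G$); the paper's version avoids the quotient identification entirely but needs the product decomposition of $\mu(NH)$ and a slightly fussier element chase.
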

\begin{proof}  We have
  $$\alpha =\epsilon bn $$
for some $\epsilon\in \mu(G)$ and  $n\in N^\sharp$.

We first claim that both $b$ and $\epsilon$ belongs to the group $(NH)^\sharp$. Indeed,
$b=\epsilon^{-1} \alpha n^{-1}$, so $b=\st(\alpha n^{-1})$. The element $\alpha
n^{-1}$ belongs to $(NH)^\sharp$, and since $NH$ is a closed subset of $G$, it follows
from Fact~\ref{standard}, that  $b\in NH$.  Hence, $\epsilon=\alpha n^{-1}b^{-1}$ is in $(NH)^\sharp$ as
well.

Thus, we may work entirely in the group $NH$, so we may assume that $G=NH=HN$.

\begin{claim}\label{flat-claim}If $G=NH$ then
$$\mu(G)=\mu(N)\,\mu(H)=\mu(H)\,\mu(N).$$
\end{claim}
\begin{proof}
 By continuity of multiplication, $\mu(N)\,\mu(H)\subseteq \mu(G)$. For the
opposite inclusion, it is enough to show that for every
$\CL_{\mathrm{om}}$-definable $U\subseteq N$, $V\subseteq H$, neighborhoods of $e$,
we have $\mu(G)\subseteq (U\,V)^\sharp$. For that, it suffices to show that the set
$U\,V$ contains an open neighborhood of $e$ in $G$. This follows from the fact that
the map $(x,y)\mapsto xy$ from $N\times H$ into $G$, is a submersion
at $(e,e)$.
\end{proof}

We are now ready to prove the lemma. We start with $\alpha=\epsilon bn$, with
$\epsilon\in \mu(G)$ and $n\in N^\sharp$. Using the above Claim,
$\epsilon=\epsilon_h\epsilon_n$ with $\epsilon_h \in \mu(H)$ and $\epsilon_n\in
\mu(N)$. We also write $b=b_h b_n$, with $b_h\in H$ and $b_n\in N$. So,
$\alpha=\epsilon_h \epsilon_n b_h n'$, with $n'\in N^\sharp$. Since $N$ is normal,
$\epsilon_n b_h= b_h n^*$, for $n^*\in N^\sharp$, so
$$\alpha =\epsilon_hb_hn^* n'.$$

Clearly,  $b_hn^*n'$ is in $b_hN^\sharp$ and since $\alpha$ and $ \epsilon_h$ are in
$H^\sharp$, we also have $b_hn^*n'\in H^\sharp$. So,  $\alpha\in
\mu(G)\,(b_hN^\sharp\cap H^\sharp)$, and in particular $b_hN\cap H$ is nonempty, and
hence a left coset of $N\cap H$. This ends the proof of Lemma~\ref{flat-lemma}.
\end{proof}

\begin{cor}\label{flat-cor1} Let $G$ and $H,N\subseteq G$ be as above.
  Assume that there are $b,c\in G$ such that the cosets  $bN$ and $cH$
  are near $\alpha$. Then $bN\cap cH \neq \emptyset$ and the coset
  $bN\cap cH$ is near $\alpha$.
\end{cor}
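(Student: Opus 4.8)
The plan is to reduce to Lemma~\ref{flat-lemma} by a translation. We are given $b,c \in G$ and $\alpha \in G^\sharp$ such that $bN$ and $cH$ are each near $\alpha$. First I would translate so that the coset $cH$ becomes a subgroup: consider $\alpha' := c^{-1}\alpha$. Since left multiplication by $c^{-1}$ is a semialgebraic group automorphism of $G^\sharp$ that preserves $\mu(G)$ (because $\st$ is a group homomorphism, equivalently $c^{-1}\mu(G)c = \mu(G)$ as $c \in G$ and $\mu$ is normal in $\CO$), the statement ``$cH$ is near $\alpha$'', i.e. $\alpha \in \mu(G)\,cH^\sharp$, becomes $\alpha' \in c^{-1}\mu(G)\,cH^\sharp = \mu(G)\,H^\sharp$, so $\alpha' \in \mu(G)H^\sharp$. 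I'd like this to say $\alpha' \in H^\sharp$ up to an infinitesimal, but more precisely it says $\alpha' = \epsilon h$ for some $\epsilon \in \mu(G)$, $h \in H^\sharp$; replacing $\alpha'$ by $h^{-1}$-translate on the right does not obviously help since Lemma~\ref{flat-lemma} needs $\alpha \in H^\sharp$ on the nose. So instead I would set $\alpha'' := \epsilon^{-1}\alpha' = \epsilon^{-1}c^{-1}\alpha$, which lies in $H^\sharp$ and satisfies $\tp_{\mathrm{sa}}(\alpha''/\RR)$ related to $\alpha$ only up to the $\mu$-translate — but that is fine, because nearness is invariant under multiplication by $\mu(G)$ on the left: $\alpha'' \in \mu(G)\,gK^\sharp \iff \alpha' \in \mu(G)\,gK^\sharp$ for any coset $gK$, since $\mu(G)$ is a group.

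With $\alpha'' \in H^\sharp$ in hand, I would check that the coset $c^{-1}bN$ (which is $c^{-1}\cdot bN$, a coset of $N$, and $N$ is still normal in $G$) is near $\alpha''$: from $\alpha \in \mu(G)\,bN^\sharp$ we get, translating by $c^{-1}$ and using $c^{-1}\mu(G)c = \mu(G)$ and normality of $N$, that $\alpha' = c^{-1}\alpha \in \mu(G)\,c^{-1}bN^\sharp$, and then multiplying on the left by $\epsilon^{-1} \in \mu(G)$ gives $\alpha'' \in \mu(G)\,c^{-1}bN^\sharp$. Now Lemma~\ref{flat-lemma} applies with $H$ as is, $N$ as is (normal in $G$), the element $\alpha''\in H^\sharp$, and $b$ replaced by $c^{-1}b$: it yields that $c^{-1}bN \cap H \neq \emptyset$ and that the coset $c^{-1}bN \cap H$ of $N \cap H$ is near $\alpha''$, hence near $\alpha'$. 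Finally I would translate back by $c$: multiplying on the left by $c$, the set $c^{-1}bN \cap H$ becomes $bN \cap cH$, which is therefore nonempty, and ``near $\alpha''$'' becomes ``near $c\alpha''$'', which after reinserting the $\mu(G)$-factors equals ``near $\alpha$'', using once more the $\mu(G)$-invariance of nearness on the left and $c\mu(G) = \mu(G)c$.

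The main obstacle I anticipate is purely bookkeeping: keeping straight that nearness is a statement about membership in a set $\mu(G)\,gH^\sharp$ and verifying at each translation step that conjugating $\mu(G)$ by an element of $G$ (not of $G^\sharp$!) returns $\mu(G)$, and that left multiplication by $\mu(G)$ can be absorbed freely. There is no real content beyond Lemma~\ref{flat-lemma}; the only genuinely delicate point is the choice of $\alpha''$ — one must produce an honest element of $H^\sharp$, not merely of $\mu(G)H^\sharp$, and confirm that passing from $\alpha$ to $\alpha''$ (which changes the $\tp_{\mathrm{sa}}$) does not change which cosets are near it, which is exactly the left-$\mu(G)$-invariance noted after the definition of nearness. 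Once that is set up cleanly, the corollary follows in a few lines.
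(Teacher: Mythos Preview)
Your proof is correct and follows essentially the same route as the paper's: both arguments absorb an infinitesimal to land in $(cH)^\sharp$ (equivalently, after translating by $c^{-1}$, in $H^\sharp$), then apply Lemma~\ref{flat-lemma} to $c^{-1}\alpha$ with the coset $c^{-1}bN$, and translate back. The paper does the $\mu$-adjustment before translating by $c^{-1}$ while you do it after, but this is only a reordering of the same steps; your explicit check that $c^{-1}\mu(G)c=\mu(G)$ and that nearness is left-$\mu(G)$-invariant is exactly the content the paper compresses into its ``we may replace'' sentence.
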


\begin{proof} Note first that for any $\epsilon\in \mu(G)$, $\alpha\in \mu(G)\,(bN^\sharp\cap
cH^\sharp)$ if and only if $\epsilon \alpha \in \mu(G)\,(bN^\sharp\cap cH^\sharp)$. Thus,
 we may replace the assumption that $\alpha\in
\mu(G) cH^\sharp$ by $\alpha \in (cH)^{\sharp}$, so $c^{-1}\alpha \in H^\sharp\cap
\mu(G)\,(c^{-1}bN^\sharp)$. We apply Lemma~\ref{flat-lemma} and conclude that
$c^{-1}\alpha\in \mu(G) (c^{-1}bN\cap H)^\sharp$. It follows that $\alpha\in \mu(G)
(bN\cap cH)^\sharp$. In particular, $bN\cap cH\neq \emptyset$, so it is a left coset
of $N\cap H$.
\end{proof}

We also need:

\begin{lem}\label{cosets} Let $G$ be a linear real algebraic  group and $H\subseteq G$  a real
algebraic subgroup. For $g_1,g_2\in G$, assume that $\mu(G)\,g_1H^\sharp\cap
\mu(G)\,g_2H^\sharp\neq \emptyset$. Then $g_1H=g_2H$.
\end{lem}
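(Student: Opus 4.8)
The plan is to reduce the statement to the case $g_1 = e$ by left-translating. Since $\mu(G)$ is normal in $\CO(G)$ and, more to the point, a subgroup of $G^\sharp$ that is normalized by every element of $G$ (indeed $g(I+\mu(\mathfrak R))g^{-1} = I + \mu(\mathfrak R)$ for $g\in G$, as conjugation by a fixed matrix preserves the set of infinitesimal matrices), we have $g^{-1}\mu(G) = \mu(G)g^{-1}$ for any $g\in G$. Multiplying the hypothesis $\mu(G)\,g_1H^\sharp \cap \mu(G)\,g_2H^\sharp \neq \emptyset$ on the left by $g_1^{-1}$ therefore gives $\mu(G)\,H^\sharp \cap \mu(G)\,(g_1^{-1}g_2)H^\sharp \neq \emptyset$, and it suffices to show that this forces $g_1^{-1}g_2 \in H$, i.e. we may assume $g_1 = e$ and must show $g_2 \in H$.

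So suppose $\epsilon_1 h_1 = \epsilon_2 g_2 h_2$ with $\epsilon_i \in \mu(G)$ and $h_i \in H^\sharp$. Then $g_2 = \epsilon_2^{-1}\epsilon_1 h_1 h_2^{-1} = (\epsilon_2^{-1}\epsilon_1)\cdot(h_1 h_2^{-1})$, where $\epsilon_2^{-1}\epsilon_1 \in \mu(G)$ and $h_1 h_2^{-1} \in H^\sharp$. Hence $g_2 \in \mu(G)\,H^\sharp$. Now I invoke Fact~\ref{standard}: since $H$ is a closed (real algebraic) subgroup of $G$, we have $\cl(H) = H = \st(H^\sharp)$. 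The element $g_2$ lies in $G \subseteq \CO(G)$, and from $g_2 = \eta h$ with $\eta \in \mu(G)$, $h \in H^\sharp$ we get $h = \eta^{-1}g_2 \in \CO(G)$ as well, so $\st(h)$ is defined and $\st(h) \in \st(H^\sharp) = H$. Applying the standard-part homomorphism $\st\colon \CO(G)\to G$ (whose kernel is $\mu(G)$) to $g_2 = \eta h$ yields $g_2 = \st(g_2) = \st(\eta)\st(h) = e\cdot \st(h) = \st(h) \in H$, as desired.

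The only genuinely delicate point is making sure the standard-part map can be applied: one must check that both $g_2$ and the chosen $h\in H^\sharp$ representing it modulo $\mu(G)$ land in $\CO(G)$, which is immediate here since $g_2 \in G \subseteq \CO(G)$ and $\mu(G) \subseteq \CO(G)$ so $h = \eta^{-1}g_2 \in \CO(G)$. Everything else is formal: normality of $\mu(G)$ under conjugation by $G$-elements handles the reduction, and the fact that $H$ is closed (together with Fact~\ref{standard}) handles the conclusion. Finally, $g_1H = g_2H$ follows from $g_1^{-1}g_2 \in H$.
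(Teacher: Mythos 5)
Your proof is correct and is essentially the paper's own argument: both rearrange the equation $\epsilon_1 g_1 h_1 = \epsilon_2 g_2 h_2$ using the fact that $\mu(G)$ is normalized by $G$, so that $g_1^{-1}g_2$ (or $g_2^{-1}g_1$) is an infinitesimal times an element of $H^\sharp$, and then apply the standard part map together with the closedness of $H$ to conclude $g_1^{-1}g_2\in H$. The extra care you take about landing in $\CO(G)$ before applying $\st$ is sound and matches what the paper leaves implicit.
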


\begin{proof} We let $$\alpha=\epsilon_1g_1h_1=\epsilon_2g_2h_2,$$ where $h_1,h_2\in H^\sharp$
and $\epsilon_1,\epsilon_2\in \mu(G)$. It follows that $g_2^{-1}g_1=\epsilon
h_2h_1^{-1}$ for some $\epsilon\in \mu(G)$. But then
$g_2^{-1}g_1=\st(h_2h_1^{-1})\in H$, so $g_1H=g_2H$.
\end{proof}

\begin{rem} Although we proved lemmas~\ref{flat-lemma}--\ref{cosets}
  for linear real algebraic groups, the results  hold for an
arbitrary definable group in an o-minimal structures, with exactly the
same proofs and with $\mu(G)$ defined as in \cite{mustab}.
See \cite{Otero} for more on definable groups in o-minimal structures.
\end{rem}

We are ready to prove the main result of this section.

\begin{thm}\label{minimum}
Let $G$ be a real unipotent group and let $\alpha\in G^{\sharp}$.
\begin{enumerate}
\item If $H_1,H_2$ are real algebraic subgroups of $G$ and
  $g_1,g_2\in G$ such that the cosets $g_1H_1$ and $g_2H_2$ are near
  $\alpha$ then  $g_1H_1\cap g_2H_2\neq \emptyset$ and the coset
$g_1H_1\cap g_2H_2$ is near $\alpha$ as well.
\item There exists a smallest left coset of
real algebraic subgroup of $G$, among all such cosets that are near $\alpha$.
\end{enumerate}
\end{thm}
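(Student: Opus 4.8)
The plan is to prove part (1) first and then derive part (2) by a dimension-minimization argument. Part (1) is the real content; it is a common generalization of Corollary~\ref{flat-cor1}, where one of the two subgroups was assumed normal. The strategy is to reduce to the normal case by induction on $\dim G$. If $g_1H_1$ and $g_2H_2$ are both near $\alpha$, I may first translate on the left (as in the proof of Corollary~\ref{flat-cor1}, using that $\alpha\in\mu(G)\mathcal Z$ iff $\epsilon\alpha\in\mu(G)\mathcal Z$ for $\epsilon\in\mu(G)$) to assume $\alpha\in G^\sharp$ itself lies in one of the cosets, say $\alpha\in (g_1H_1)^\sharp$; replacing $\alpha$ by $g_1^{-1}\alpha$ and $H_2$'s coset accordingly, I may assume $\alpha\in H_1^\sharp$ and $g_2H_2$ is near $\alpha$ for some $g_2$. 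Now if $H_1=G$ there is nothing to prove, and if $H_2=G$ then Lemma~\ref{cosets} (applied inside $G$ with $H=G$) forces the statement trivially; so assume both $H_1,H_2$ are proper.

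By Claim~\ref{proper-normal}, $H_1$ is contained in a proper normal real algebraic subgroup $N\lhd G$. Since $g_2H_2$ is near $\alpha$, the coset $g_2H_2\cdot$ (more precisely $g_2H_2$) need not be near $\alpha$ relative to $N$, so the move is: the coset $g_2(H_2\cap N)$ — I first want to show $g_2 H_2$ meets $N$. Because $\alpha\in H_1^\sharp\subseteq N^\sharp$ and $g_2H_2$ is near $\alpha$, write $\alpha=\epsilon g_2 h_2$ with $\epsilon\in\mu(G)$, $h_2\in H_2^\sharp$; then $g_2h_2=\st$-invariantly lands in $N$ by Fact~\ref{standard} (as $N$ is closed and $\alpha,\epsilon\in$ bounded part meeting $N^\sharp\cdot$... more carefully: $g_2 = \epsilon^{-1}\alpha h_2^{-1}$, and applying Lemma~\ref{flat-lemma} with the normal subgroup $N$ and the subgroup $G$ replaced suitably is awkward, so instead I apply Lemma~\ref{flat-lemma} directly): apply Lemma~\ref{flat-lemma} with ambient group $G$, normal subgroup $N$, and the subgroup $H_2$ — wait, Lemma~\ref{flat-lemma} needs $\alpha\in H^\sharp$ for the non-normal $H$, and here $\alpha\in H_1^\sharp$, $H_1\subseteq N$. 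The clean route: apply Corollary~\ref{flat-cor1} with the normal subgroup $N$ (near $\alpha$ via the coset $eN=N$ itself, since $\alpha\in N^\sharp$) and the subgroup $H_2$ with coset $g_2H_2$, concluding $N\cap g_2H_2\neq\emptyset$ and is near $\alpha$. So I may replace $g_2H_2$ by a coset $g_2'(H_2\cap N)$ inside $N$, which is still near $\alpha$. Likewise $g_1H_1\subseteq N$. Now everything — $\alpha$, both cosets — lives in the proper real algebraic subgroup $N$ of strictly smaller dimension, and $H_2\cap N$, $H_1$ are real algebraic subgroups of $N$; by the induction hypothesis applied inside $N$, the cosets $g_1H_1$ and $g_2'(H_2\cap N)$ intersect and their intersection is near $\alpha$. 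Since this intersection is contained in $g_2H_2$ as well, and it is a coset of $H_1\cap H_2$ (an intersection of two real algebraic subgroups is real algebraic), part (1) follows. The base case $\dim G\le 1$ is immediate since every proper subgroup is trivial.

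For part (2): by Fact~\ref{all-cosets} the collection of cosets of real algebraic subgroups of $G$ is a (semi-algebraic) family, and among all cosets near $\alpha$ — which is nonempty, e.g. $G$ itself is near $\alpha$ — choose one, call it $g_0H_0$, whose subgroup $H_0$ has minimal dimension. If $gH$ is any other coset near $\alpha$, then by part (1) the coset $g_0H_0\cap gH$ is near $\alpha$ and is a coset of $H_0\cap H\subseteq H_0$; by minimality of $\dim H_0$ we get $\dim(H_0\cap H)=\dim H_0$, and since $H_0\cap H$ is a subgroup of the connected group $H_0$ of full dimension, $H_0\cap H=H_0$, i.e. $H_0\subseteq H$. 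As $g_0H_0\cap gH$ is a nonempty subset of $g_0H_0$ that is a coset of $H_0\cap H=H_0$, it equals $g_0H_0$; hence $g_0H_0\subseteq gH$. Thus $g_0H_0$ is contained in every coset near $\alpha$, so it is the smallest such.

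The main obstacle is the reduction step in part (1): correctly invoking Lemma~\ref{flat-lemma}/Corollary~\ref{flat-cor1} with a normal subgroup $N\supseteq H_1$ to push the non-normal coset $g_2H_2$ down into $N$ while preserving nearness to $\alpha$, and checking that after this reduction the induction hypothesis genuinely applies inside $N$ (in particular that $\alpha\in N^\sharp$, which holds because $\alpha\in H_1^\sharp\subseteq N^\sharp$). Once the ambient group is replaced by $N$, the dimension drops and the induction closes.
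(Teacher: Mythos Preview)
Your proposal is correct and follows essentially the same route as the paper: induction on $\dim G$, using Claim~\ref{proper-normal} to embed one of the subgroups (your $H_1$) in a proper normal $N$, then invoking Corollary~\ref{flat-cor1} to replace the other coset $g_2H_2$ by a coset of $H_2\cap N$ inside $N$, and finishing by induction in $N$. The only cosmetic differences are that you perform the left-translation (to arrange $\alpha\in H_1^\sharp$) at the outset whereas the paper does it at the end, and you spell out the dimension-minimization argument for part~(2) while the paper simply notes it follows immediately from~(1); your explicit check that $\alpha\in N^\sharp$ (so that nearness relative to $G$ becomes nearness relative to $N$, via $\mu(G)\cap N^\sharp=\mu(N)$) is a point the paper leaves implicit.
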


\begin{proof}
(1) We use induction on $\dim G$ and note that the result is obviously true
when $\dim G=1$.

We may clearly assume that $H_1,H_2$ are both proper subgroups of
$G$. So by
Claim~\ref{proper-normal} there exists a proper normal real algebraic $N_1\subseteq G$
containing $H_1$.  Obviously  $\,g_1N_1$ is near $\alpha$.  By
Corollary~\ref{flat-cor1}, $g_1N_1\cap g_2H_2\neq \emptyset$ and for
$d\in
g_1N_1\cap g_2H_2$ the coset  $d(N_1\cap H_2)$ is near
$\alpha$.

Obviously, for $d\in
g_1N_1\cap g_2H_2$ we have $g_1H_1\cap g_2H_2=g_1H_1 \cap d(N_1\cap
H_2)$.  Replacing $H_2$ by $N_1\cap H_2$ and $g_2$ by $d\in N_1\cap
H_2$, if needed, we may assume that $H_2\subseteq N_1$.

By assumption, $\alpha\in \mu(G)\,g_1H_1^\sharp\cap \mu(G) \,g_2H_2^\sharp,$ so $\alpha\in
\mu(G) g_1N_1^\sharp\cap \mu(G) g_2N_1^\sharp$. By Claim~\ref{cosets}, $g_1N_1=g_2N_1$,
hence $g_1^{-1}g_2\in N$.

We now consider $\alpha'=g_1^{-1}\alpha \in N_1^\sharp$, and  note that
$$\alpha'\in \mu(G)\,H_1^\sharp\cap \mu(G)\,g_1^{-1}g_2H_2^\sharp.$$

(2) The existence of a smallest coset immediately follows from (1).
\end{proof}

The above theorem allows us to define:

\begin{defn} Given real unipotent
$G$, and $\alpha \in G^\sharp$, we denote by $A_\alpha$ the smallest coset near
$\alpha$. We call it \emph{the nearest coset to $\alpha$}. We denote by $H_\alpha$
the associated group, so $A_\alpha=gH_\alpha$ for any $g\in A_\alpha$. For $p$ the
complete type $\tp_{\mathrm{om}}(\alpha/\mathbb R)$, we also use $A_p:=A_\alpha$ and
write $A_p=gH_p$.
\end{defn}

Note that if $\alpha \in \CO(G)$
 then the nearest coset to $\alpha$ is just
$\{\st(\alpha)\}$, which can be viewed as a coset of the identity of $G$. On the
other hand, if $\alpha\notin \CO(G)$ then no element in $G$ is near $\alpha$ and
therefore $\dim A_\alpha>0$. We thus have:
\begin{lem}\label{bounded}
For $\alpha\in G^\sharp$, $\,\, \alpha\in \CO(G)$ if and only if $A_\alpha =
\{\st(\alpha)\}$.\end{lem}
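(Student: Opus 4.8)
The statement asserts the equivalence $\alpha\in\CO(G)\iff A_\alpha=\{\st(\alpha)\}$. The plan is to prove both directions directly from the definitions, using the remarks already made immediately before the statement. First I would establish the forward direction: if $\alpha\in\CO(G)$, then since $\st:\CO(G)\to G$ is a surjective homomorphism with kernel $\mu(G)$, we have $\alpha\in\mu(G)\st(\alpha)$, which says precisely that the singleton coset $\{\st(\alpha)\}$ — viewed as a coset $\st(\alpha)\cdot\{e\}$ of the trivial algebraic subgroup $\{e\}\subseteq G$ — is near $\alpha$. Since $\{e\}$ is a real algebraic subgroup of $G$ and no coset can be properly smaller than a singleton, Theorem~\ref{minimum}(2) forces $A_\alpha=\{\st(\alpha)\}$.

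For the converse I would argue contrapositively. Suppose $\alpha\notin\CO(G)$. By the observation made just after the definition of ``near'' (namely that some $g\in G$ has $gH$ near $\alpha$ iff $\alpha\in\CO(G)\,H^\sharp$), applied with $H=\{e\}$, there is \emph{no} $g\in G$ with the singleton $\{g\}$ near $\alpha$; in particular $\{\st(\alpha)\}$ is not even near $\alpha$ (indeed $\st(\alpha)$ need not be defined). On the other hand $A_\alpha$ certainly exists by Theorem~\ref{minimum}(2) — it is the smallest coset near $\alpha$, and the whole group $G=eG$ is always near $\alpha$ since $\alpha\in G^\sharp=\mu(G)\cdot eG^\sharp$. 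Since $A_\alpha$ is near $\alpha$ but no singleton is, we get $A_\alpha\neq\{\st(\alpha)\}$; moreover $A_\alpha$ must have positive dimension, because a zero-dimensional coset of a real algebraic subgroup of the connected group $G$ is a singleton.

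The only mild subtlety — and the one point I would take care to spell out — is the bookkeeping around $\st(\alpha)$: in the backward direction we are told $A_\alpha=\{\st(\alpha)\}$, so in particular $\st(\alpha)$ is a meaningful expression, which already signals $\alpha\in\CO(G)$; but to make the argument self-contained I would instead note that $A_\alpha=\{\st(\alpha)\}$ means $A_\alpha$ is a singleton, hence a coset $\{g\}$ near $\alpha$ for some $g\in G$, hence $\alpha\in\mu(G)g\subseteq\CO(G)$ by normality of $\mu(G)$ in $\CO(G)$, and then necessarily $g=\st(\alpha)$. No real obstacle is expected here — the whole lemma is essentially an unwinding of the definitions together with Theorem~\ref{minimum}(2) and the connectedness of $G$.
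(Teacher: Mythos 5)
Your proof is correct and follows essentially the same route as the paper, which disposes of the lemma with exactly these two observations: for $\alpha\in\CO(G)$ the singleton $\{\st(\alpha)\}$ is a coset of the trivial algebraic subgroup near $\alpha$ and hence is the nearest coset, while for $\alpha\notin\CO(G)$ no point of $G$ is near $\alpha$, so $A_\alpha$ has positive dimension. Your extra bookkeeping about recovering $g=\st(\alpha)$ from $\alpha\in\mu(G)g$ is a harmless elaboration of what the paper leaves implicit.
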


We also need:
\begin{lem}\label{homom1} Assume that $G$ and $G_1$ are real unipotent groups and
$f:G\to G_1$ is a surjective Lie homomorphism. Then \begin{enumerate} \item
$f(\mu(G))=\mu(G_1)$ and $f(\CO(G))=\CO(G_1)$. \item If $\alpha\in G^\sharp$ and
$\beta=f(\alpha)$ then $f(A_\alpha)=A_\beta$.\end{enumerate}
\end{lem}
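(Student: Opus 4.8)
The plan is to prove part (1) first and then bootstrap part (2) from it. For (1), since $f$ is a Lie homomorphism between real unipotent groups it is a polynomial map with $f(I)=I$ by Lemma~\ref{homom2}, so each coordinate of $f(x)-I$ is a polynomial in the coordinates of $x-I$ with vanishing constant term; this gives at once the inclusions $f(\mu(G))\subseteq\mu(G_1)$ and $f(\CO(G))\subseteq\CO(G_1)$. For the reverse inclusions I would use that $df\colon\mathfrak g\to\mathfrak g_1$ is a surjective $\RR$-linear map and hence admits an $\RR$-linear section $s\colon\mathfrak g_1\to\mathfrak g$. The map $\sigma=\exp_G\circ s\circ\log_{G_1}$ (extended to $\mathfrak R$-points) is then a polynomial section of $f$, because $f=\exp_{G_1}\circ df\circ\log_G$; and since $\exp_G,\log_{G_1}$ are polynomial (Remark~\ref{rem:exp}) and $s$ has real matrix entries, $\sigma$ sends $\mu(G_1)$ into $\mu(G)$ and $\CO(G_1)$ into $\CO(G)$. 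Writing any $\eta\in\mu(G_1)$ (resp.\ $\eta\in\CO(G_1)$) as $f(\sigma(\eta))$ with $\sigma(\eta)$ in $\mu(G)$ (resp.\ $\CO(G)$) then finishes (1).

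For (2), I would first record two facts from the $\exp$--$\log$ correspondence (Fact~\ref{equiv}): $f$ sends real algebraic subgroups of $G$ to real algebraic subgroups of $G_1$ (if $\mathfrak h=\log_G(H)$ then $df(\mathfrak h)$ is a Lie subalgebra of $\mathfrak g_1$ and $f(H)=\exp_{G_1}(df(\mathfrak h))$), and $f$ pulls real algebraic subgroups of $G_1$ back to real algebraic subgroups of $G$. Now write $A_\alpha=gH_\alpha$, so $\alpha\in\mu(G)\,gH_\alpha^\sharp$. Applying $f$ and using part (1) together with $f(H_\alpha^\sharp)\subseteq f(H_\alpha)^\sharp$ (elementarity, as $f$ and $H_\alpha$ are defined over $\RR$) gives $\beta=f(\alpha)\in\mu(G_1)\,f(g)f(H_\alpha)^\sharp$; thus $f(g)f(H_\alpha)$ is a coset of a real algebraic subgroup near $\beta$, and minimality of $A_\beta$ yields $A_\beta\subseteq f(g)f(H_\alpha)=f(A_\alpha)$.

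For the reverse inclusion, write $A_\beta=cH_\beta$, choose $c'\in G$ with $f(c')=c$ (surjectivity of $f$), and consider the coset $c'f^{-1}(H_\beta)$, which maps onto $cH_\beta$ under $f$. I would check it is near $\alpha$: from $f(c'^{-1}\alpha)=c^{-1}\beta\in\mu(G_1)\,H_\beta^\sharp$ write $f(c'^{-1}\alpha)=\eta h$ with $\eta\in\mu(G_1)$ and $h\in H_\beta^\sharp$; by (1) write $\eta=f(\epsilon)$ with $\epsilon\in\mu(G)$, so $f(\epsilon^{-1}c'^{-1}\alpha)=h\in H_\beta^\sharp$, hence $\epsilon^{-1}c'^{-1}\alpha\in(f^{-1}(H_\beta))^\sharp$ (elementarity) and therefore $c'^{-1}\alpha\in\mu(G)\,(f^{-1}(H_\beta))^\sharp$. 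Minimality of $A_\alpha$ now gives $A_\alpha\subseteq c'f^{-1}(H_\beta)$, and applying $f$ yields $f(A_\alpha)\subseteq cH_\beta=A_\beta$. Combined with the previous paragraph, $f(A_\alpha)=A_\beta$.

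The routine steps (polynomiality of $f$, $\exp$, $\log$) are already in the Preliminaries; the two points requiring care are the bookkeeping around ``defined over $\RR$'' needed to transfer set-theoretic identities such as $f^{-1}(H_\beta^\sharp)=(f^{-1}(H_\beta))^\sharp$ and $f(H_\alpha^\sharp)\subseteq f(H_\alpha)^\sharp$ between $G$ and $G^\sharp$, and the verification that $f$ really sends real algebraic subgroups to real algebraic subgroups (and likewise for preimages). I expect the latter to be the main obstacle, though it is dispatched quickly once one passes to Lie algebras via $\log_G$ and uses that $df$ is a Lie algebra homomorphism.
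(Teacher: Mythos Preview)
Your proof is correct and, for part~(2), follows the same push-forward/pull-back argument as the paper (the paper just writes $\alpha\in\mu(G)\,f^{-1}(A_\beta)$ directly rather than lifting $c$ to some $c'$, but the content is identical). For part~(1) the paper takes a shorter route---it simply notes that a surjective Lie homomorphism is continuous and open and reads off both equalities from that---whereas your explicit polynomial section $\sigma=\exp_G\circ s\circ\log_{G_1}$ is a more hands-on but equally valid alternative that avoids any implicit appeal to saturation.
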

\begin{proof}
By Lemma~\ref{homom2}, $f$ is a polynomial map and hence has a natural
extension to $\mathfrak R_\mathrm{om}$, which is still denoted by $f:G^\sharp\to
G_1^\sharp$.

(1)  The map $f$ is continuous and open (by its surjectivity), and hence we have
$f(\mu(G))=\mu(G_1)$ and $f(\CO(G))=\CO(G_1)$.

(2) It follows from (1) that if  $gH$ is near $\alpha$ then $f(gH)$ is near $\beta$,
and therefore $A_\beta\subseteq f(A_\alpha)$. For the opposite inclusion, assume that
$A_\beta=g_1H_1\subseteq G_1$. We have $\beta\in \mu(G_1)A_\beta$ and therefore
$\alpha\in \mu(G) f^{-1}(A_\beta)$ (here we use that $f(\mu(G))=\mu(G_1)$). By the
minimality of $A_\alpha$, we have $A_\alpha\subseteq f^{-1}(A_\beta)$ and therefore
$f(A_\alpha)\subseteq A_\beta$.\end{proof}

We end this section with an example which shows that
Theorem~\ref{minimum} fails for arbitrary linear real algebraic groups.

\begin{sample}
We work with $G=SL(2,\RR)$. For $\varepsilon$ an infinitesimally small element of
$\mathfrak R$, we let \[ \alpha=\begin{pmatrix} \varepsilon & 0 \\ 0 &
\varepsilon^{-1}
\end{pmatrix} \] be an element of $SL(2,\mathfrak R)$. We show that there is no
minimal coset near $\alpha$.

 We denote  by $D$ the diagonal subgroup of $SL(2,\RR)$. Since
 $\alpha \in D^\sharp$, we have that $D$ is a coset
 near $\alpha$.

 Let
\[ b=\begin{pmatrix} 1 & 1 \\ 0 & 1
\end{pmatrix}, \]
and $H$ be the conjugate of $D$ by $b$, namely $H=b^{-1}Db$.

We consider the coset $bH = Db$, and claim that it is near $\alpha$.   Obviously,
the element $\beta=\alpha b$ is in $D^\sharp b=bH^\sharp$, so it is enough to see that
$\alpha\beta^{-1}$ is in $\mu(G)$.
We have $$\alpha \beta^{-1} =\begin{pmatrix} 1 & \varepsilon^2 \\
0 & 1.
\end{pmatrix},$$ clearly in $\mu(G)$.

Thus, both $D$ and $bH$ are near $\alpha$,  but $D \cap bH=D\cap Db =\emptyset$, so
there is no minimal coset near $\alpha$.

\medskip
 The above example takes place entirely in the solvable group of
upper triangular matrices, thus we see that Theorem~\ref{minimum} fails even for
solvable linear Lie groups.
\end{sample}

\section{The algebraic normal closure of a set}
We still assume here that $G$ is a real unipotent group. All definability is in
$\Rom$.

\begin{defn} Given a definable set $X\subseteq G$, we let $\la X\ra_{alg}$ be the minimal
real algebraic subgroup of $G$ containing $X$.

We call the smallest algebraic normal subgroup of $G$ containing $X$ \emph{the
algebraic normal closure of $X$}.
\end{defn}

\begin{lem}\label{intersection} Let $P\subseteq G$ a  real algebraic subgroup and assume that $U\subseteq G$ is a
 nonempty  open subset of
 $G$. Then the group $\la \bigcup_{g\in U} P^g\ra_{alg}$ is normal in $G$, and in
 particular, equals the algebraic normal closure of  $P$.
 \end{lem}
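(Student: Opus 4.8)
The plan is to show two things: that $\la \bigcup_{g\in U} P^g\ra_{alg}$ is invariant under conjugation by all of $G$, and that any normal algebraic subgroup containing $P$ must contain all the conjugates $P^g$ and hence this group. The second point is immediate, so the content is in proving normality.

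\medskip

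First I would reduce to a statement about Lie algebras. Write $N = \la \bigcup_{g\in U} P^g\ra_{alg}$ and let $\mathfrak n$ be its Lie algebra; since $\exp_G$ is a polynomial diffeomorphism (Remark~\ref{rem:exp}) that conjugates the conjugation action on $G$ to the adjoint action on $\mathfrak g$, it suffices to show $\mathrm{Ad}(g)\mathfrak n = \mathfrak n$ for all $g\in G$, equivalently that $\mathfrak n$ is an ideal of $\mathfrak g$, i.e. $[\mathfrak g, \mathfrak n]\subseteq \mathfrak n$. Note first that $\mathfrak n$ already contains $\mathrm{Ad}(g)\mathfrak p$ for every $g\in U$, where $\mathfrak p = \mathrm{Lie}(P)$: indeed $P^g \subseteq N$ gives $\exp_G(\mathrm{Ad}(g)\mathfrak p) = \exp_G(\mathfrak p)^g \subseteq N$, and since $N$ is algebraic (hence closed and connected) its Lie algebra contains $\mathrm{Ad}(g)\mathfrak p$.

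\medskip

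Now I would use that $U$ is open. Fix $X\in \mathfrak g$ and $Y\in \mathfrak p$. The map $g\mapsto \mathrm{Ad}(g)Y$ is (real analytic, in fact polynomial) from $G$ to $\mathfrak g$, and for $g\in U$ its image lies in $\mathfrak n$, a linear subspace. Consider the curve $t\mapsto \mathrm{Ad}(\exp_G(tX) g_0)Y$ for a fixed $g_0\in U$; for all sufficiently small $t$ the element $\exp_G(tX)g_0$ still lies in the open set $U$, so this curve lies entirely in $\mathfrak n$ on an interval around $0$. Differentiating at $t=0$ gives $[X, \mathrm{Ad}(g_0)Y]\in \mathfrak n$. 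Thus $[\mathfrak g, \mathrm{Ad}(g_0)\mathfrak p]\subseteq \mathfrak n$ for this one $g_0\in U$. Since $\mathfrak n$ is generated as a Lie algebra by $\{\mathrm{Ad}(g)\mathfrak p : g\in U\}$ — more precisely $\mathfrak n$ is the smallest subalgebra containing these subspaces, because $N$ is the smallest algebraic subgroup containing the $P^g$ and algebraic subgroups correspond to subalgebras — it is enough to check $[\mathfrak g, \cdot]$ preserves a generating set. Combining $[\mathfrak g, \mathrm{Ad}(g_0)\mathfrak p]\subseteq \mathfrak n$ with the Jacobi identity and the fact that $\mathfrak n$ is a subalgebra, a routine induction on bracket length shows $[\mathfrak g, \mathfrak n]\subseteq \mathfrak n$, so $\mathfrak n$ is an ideal and $N$ is normal.

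\medskip

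Finally, for the ``in particular'' clause: $N$ is an algebraic normal subgroup containing $P$ (take $g=e$, assuming $e\in U$; if not, translate $U$ or note $P = (P^{g_0})^{g_0^{-1}} \subseteq N$ since $N$ is now known to be normal and contains $P^{g_0}$). Conversely any algebraic normal subgroup $M\supseteq P$ satisfies $P^g\subseteq M$ for all $g$, hence $M\supseteq \la\bigcup_{g\in U}P^g\ra_{alg} = N$. So $N$ is the algebraic normal closure of $P$. The main obstacle I anticipate is the bookkeeping in the induction on bracket length showing $[\mathfrak g,\mathfrak n]\subseteq\mathfrak n$ from $[\mathfrak g, \mathrm{Ad}(g_0)\mathfrak p]\subseteq \mathfrak n$; one has to be careful that $\mathfrak n$ being a subalgebra closed under brackets of its generators genuinely propagates the ideal property, which is where Jacobi does the work.
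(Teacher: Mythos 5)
Your proof is correct, but it takes a genuinely different route from the paper's. You work infinitesimally: passing to Lie algebras via $\exp_G$, you show $\mathfrak n=\mathrm{Lie}(N)$ is an ideal by differentiating the curve $t\mapsto \mathrm{Ad}(\exp_G(tX)g_0)Y$ at $t=0$ (using only that $U$ is open around each $g_0$ and that $\mathfrak n$ is a closed linear subspace), and then propagating $[\mathfrak g,\cdot]$-invariance from the generating subspaces $\mathrm{Ad}(g)\mathfrak p$, $g\in U$, to all of $\mathfrak n$ by Jacobi and induction on bracket length --- that induction does go through exactly as you describe, and note your argument in fact gives $[\mathfrak g,\mathrm{Ad}(g_0)\mathfrak p]\subseteq\mathfrak n$ for \emph{every} $g_0\in U$, which is what the base case needs. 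The paper instead stays at the group level: after translating so that $U$ is a neighborhood of $e$, it uses the descending chain condition on real algebraic subgroups to find a smaller neighborhood $U_0$ of $e$ on which $\la P^V\ra_{alg}$ stabilizes to a fixed group $N$ for all neighborhoods $V\subseteq U_0$ of $e$; choosing a symmetric $B$ with $BB\subseteq U_0$ then gives $N^b=\la P^{Bb}\ra_{alg}=N$ for all $b\in B$, so the normalizer of $N$ is open and hence all of $G$ by connectedness. Your approach buys a very concrete verification at the cost of invoking the subgroup--subalgebra dictionary and the identification of $\mathfrak n$ with the subalgebra generated by the $\mathrm{Ad}(g)\mathfrak p$ (both available in the unipotent setting via Facts~\ref{fact:exp} and~\ref{equiv}); the paper's DCC-plus-connectedness argument avoids differentiation entirely and is the same trick it reuses in Lemma~\ref{partition}. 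The only cosmetic issue in your write-up is the $\mathrm{Ad}(g)$ versus $\mathrm{Ad}(g^{-1})$ convention for the Lie algebra of $P^g=g^{-1}Pg$, which does not affect the argument.
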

 \begin{proof}
For subsets $A,S\subseteq G$ we write $A^S$ for $\bigcup_{g\in S} A^g
=\bigcup_{g\in S} g^{-1}Ag$.

Let $u\in U$. Since $\la P^U\ra_{alg}= \la \ (P^u)^{u^{-1}U}\ra_{alg}$, replacing
$P$ by $P^u$ and $U$ by $u^{-1}U$, if needed, we may assume that $U$ is an open
neighborhood of $e$.

Clearly, for $V\subseteq V_1 \subseteq G$ we have $\la  P^V\ra_{alg} \subseteq  \la
P^{V_1}\ra_{alg}$, and  $\la  P^V\ra_{alg}$ is normal in $G$ if and only if $\la
P^V\ra_{alg} = \la  P^G\ra_{alg}$. Thus, to show that $\la P^U\ra_{alg}$ is normal
in $G$, it is sufficient to find a non-empty $B\subseteq U$ such that $\la
P^B\ra_{alg}$ is normal in $G$.

By DCC on real algebraic subgroups, we can find an open neighborhood $U_0$ of $e$
with $U_0 \subseteq U$ such that $\la  P^V\ra_{alg} =  \la P^{U_0}\ra_{alg}$ for any
open neighborhood $V$ of $e$ with $V \subseteq U_0$.  Let $N=\la P^{U_0}\ra_{alg}$.
We claim that $N$ is normal in $G$.

Indeed, choose open $B\ni e$ with. $B^{-1}=B$ and $B B\subseteq U_0$.  Since for any
$b\in B$ we have $e\in Bb\subseteq U_0$, it follows that
\[ N^b= (\la P^{B} \ra_{alg})^b =   \la P^{Bb} \ra_{alg} =N.  \]
Thus the normalizer of $N$ contains an open neighborhood of $e$ and therefore equals
the whole of $G$, hence $N$ is normal in $G$.
\end{proof}

As a corollary we obtain the following proposition. Recall that for a subgroup
$N\subseteq G$ and a lattice $\Gamma\subseteq G$, the group $N^\Gamma$ is the smallest
$\Gamma$-rational subgroup of $G$ containing $N$.

\begin{prop}\label{neat} Let $G$ be a real unipotent group, $P$ a real algebraic subgroup of  $G$,
and $N$ be the algebraic
  normal closure of $P$. Let $\Gamma$ be a lattice in $G$. Then the
  set $X=\{g\in G:(P^g)^{\Gamma}=N^{\Gamma}\}$ is dense in
  $G$.
\end{prop}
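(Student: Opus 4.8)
The plan is to combine Lemma~\ref{intersection} with the Baire category theorem. First I would record two elementary facts about the set $X$. Since $N$ is the algebraic normal closure of $P$, for every $g\in G$ we have $P^g\subseteq N\subseteq N^\Gamma$, and as $N^\Gamma$ is $\Gamma$-rational, minimality of $(P^g)^\Gamma$ gives $(P^g)^\Gamma\subseteq N^\Gamma$. Likewise, for any $\Gamma$-rational subgroup $R\subseteq G$ one has $(P^g)^\Gamma\subseteq R$ if and only if $P^g\subseteq R$ (one direction is $P^g\subseteq (P^g)^\Gamma$; the other is minimality of $(P^g)^\Gamma$). Consequently
$$G\setminus X=\{g\in G:(P^g)^\Gamma\subsetneq N^\Gamma\}=\bigcup_{R}\{g\in G:P^g\subseteq R\},$$
where $R$ ranges over the $\Gamma$-rational subgroups of $G$ properly contained in $N^\Gamma$.

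Next I would check that this is a \emph{countable} union. By Remark~\ref{rem:uniform}, a $\Gamma$-rational subgroup $H$ is determined by its Lie algebra $\h$, which is the $\RR$-span of a finite subset of the $\QQ$-span of $\log_G(\Gamma)$; since $\Gamma$ is discrete in the second countable group $G$ it is countable, so $\log_G(\Gamma)$ is countable and there are only countably many possible such $\h$. Hence there are only countably many $\Gamma$-rational subgroups of $G$; enumerate those properly contained in $N^\Gamma$ as $R_1,R_2,\dots$, and set $M_i=\{g\in G:P^g\subseteq R_i\}=\{g\in G:g^{-1}Pg\subseteq R_i\}$. Each $M_i=\bigcap_{p\in P}\{g:g^{-1}pg\in R_i\}$ is a closed (indeed semialgebraic) subset of $G$.

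The heart of the argument is that each $M_i$ has empty interior. Suppose some nonempty open $U\subseteq G$ were contained in $M_i$. Then $\bigcup_{g\in U}P^g\subseteq R_i$, so $\la\bigcup_{g\in U}P^g\ra_{alg}\subseteq R_i$ because $R_i$ is a real algebraic subgroup. By Lemma~\ref{intersection} the left-hand side equals the algebraic normal closure $N$ of $P$, whence $N\subseteq R_i$; since $R_i$ is $\Gamma$-rational and contains $N$, this forces $N^\Gamma\subseteq R_i$, contradicting $R_i\subsetneq N^\Gamma$. Thus each $M_i$ is closed with empty interior, hence nowhere dense. Since $G$ is a locally compact Hausdorff space and therefore a Baire space, $\bigcup_i M_i$ is meager, so its complement $X$ is comeager, and in particular dense in $G$.

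The only steps needing care are the countability claim — that there are just countably many $\Gamma$-rational subgroups — and the identification, via Lemma~\ref{intersection}, of $\la\bigcup_{g\in U}P^g\ra_{alg}$ with $N$ for an arbitrary nonempty open $U$; once these are in place, the Baire category argument is routine. I expect the countability step to be the only genuine subtlety, since the rest is a direct application of Lemma~\ref{intersection}.
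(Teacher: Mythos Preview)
Your proposal is correct and follows essentially the same route as the paper: write the complement of $X$ as a union, over proper $\Gamma$-rational subgroups $R\subsetneq N^\Gamma$, of the semialgebraic sets $\{g:P^g\subseteq R\}$, invoke Remark~\ref{rem:uniform} for countability, use Lemma~\ref{intersection} to show each such set has empty interior, and conclude by Baire. The only differences are cosmetic---you spell out the countability argument and the equivalence $(P^g)^\Gamma\subseteq R\Leftrightarrow P^g\subseteq R$ in slightly more detail than the paper does.
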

\begin{proof} It is sufficient   to prove that the complement of $X$ is nowhere dense in
$G$. Since every conjugate of $P$ is contained in $N$,  this complement can be
written as the  union  over all proper $\Gamma$-rational subgroup $L$ of
$N^{\Gamma}$, of the semialgebraic sets
$$X_L=\{g\in G:(P^g)^{\Gamma}\subseteq L\}=\{g\in G: P^g \subseteq L\}.$$

By Remark~\ref{rem:uniform},  there are at most countably many $\Gamma$-rational
subgroups of $G$, so  by Baire Category Theorem, it is enough to prove that each
of the sets $X_L$ is nowhere dense. Since $X_L$ is semialgebraic  we just need to
see that it does not contain any nonempty open set.

Assume towards contradiction that for some proper $\Gamma$-rational subgroup $L\subseteq
N^\Gamma$, $X_L$ contained an open set $U$. Then $\la \bigcup_{g\in U}P^g\ra_{alg}$
is contained in $L$. But, by Lemma~\ref{intersection}, $\la \bigcup_{g\in
U}P^g\ra_{alg}=N$, so $N\subseteq L$ and hence $N^\Gamma\subseteq L$, contradicting our choice
of $L$.\end{proof}

\section{The main result for complete types}
We assume in this section that $G$ is a real unipotent group.

\begin{lem}\label{claim1} Let $H$ be a real unipotent group,
$f:G\to H$ a surjective homomorphism of Lie groups, and   $\mathcal X$ a subset of
$G^\sharp$.

Then, for every lattice  $\Gamma\subseteq G$, if $f(\Gamma)$ is closed in $H$ then
$$f(\st(\mathcal X \Gamma^\sharp))=\st(f(\CX) f(\Gamma^\sharp)).$$
\end{lem}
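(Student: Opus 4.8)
The plan is to establish the two inclusions separately, after two preliminary observations. First, by Lemma~\ref{homom2} the homomorphism $f$ is polynomial, so it extends to a homomorphism $f\colon G^\sharp\to H^\sharp$, and by Lemma~\ref{homom1} we have $f(\mu(G))=\mu(H)$ and $f(\CO(G))=\CO(H)$. Second, $\st$ intertwines with $f$: if $\beta\in\CO(G)$ and we write $\beta=\epsilon\,\st(\beta)$ with $\epsilon\in\mu(G)$, then $f(\beta)=f(\epsilon)f(\st(\beta))$ with $f(\epsilon)\in\mu(H)$, whence $f(\beta)\in\CO(H)$ and $\st(f(\beta))=f(\st(\beta))$. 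Also note $f(\mathcal X)f(\Gamma^\sharp)=f(\mathcal X\Gamma^\sharp)$.

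For the inclusion ``$\subseteq$'' (which uses nothing about $\Gamma$): if $y=f(\st(\gamma))$ with $\gamma\in\mathcal X\Gamma^\sharp\cap\CO(G)$, then $f(\gamma)\in f(\mathcal X\Gamma^\sharp)\cap\CO(H)$ and $\st(f(\gamma))=f(\st(\gamma))=y$, so $y\in\st(f(\mathcal X\Gamma^\sharp))$. The inclusion ``$\supseteq$'' is the substantive one. Let $K=\ker f$, a real algebraic normal subgroup of $G$ (the polynomial preimage of $I$). Since $f$ is continuous and $\Gamma K$ is $f$-saturated, $f(\Gamma)$ closed in $H$ forces $\Gamma K=f^{-1}(f(\Gamma))$ to be closed in $G$, hence $\pi_\Gamma(K)$ closed in $G/\Gamma$, so by Fact~\ref{rational-facts}(3) $K$ is $\Gamma$-rational; thus $\Gamma\cap K$ is co-compact in $K$, and there is a bounded set $D\subseteq K$ with $K=D\,(\Gamma\cap K)$. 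Now take $y=\st(\delta)$ with $\delta=f(x)f(g)=f(xg)$, $x\in\mathcal X$, $g\in\Gamma^\sharp$; the obstacle is that $xg$ need not lie in $\CO(G)$, only its $f$-image does, so I must correct $g$ on the right within $\Gamma^\sharp\cap K^\sharp$. Fix a lift $\hat y\in G$ of $y$ and put $\eta=\hat y^{-1}xg$; then $f(\eta)=y^{-1}\delta\in\mu(H)=f(\mu(G))$, so $\eta=\epsilon k$ with $\epsilon\in\mu(G)$ and $k\in K^\sharp$.

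Since $D$, $\Gamma\cap K$ and $K$ are $\CL_{\mathrm{full}}$-definable, both the identity $K=D\,(\Gamma\cap K)$ and the boundedness of $D$ transfer to $\mathfrak R_{\mathrm{full}}$, so there is $g''\in(\Gamma\cap K)^\sharp$ with $k(g'')^{-1}\in D^\sharp\subseteq\CO(K)\subseteq\CO(G)$. Set $g'=(g'')^{-1}\in(\Gamma\cap K)^\sharp\subseteq\Gamma^\sharp$ and $\gamma=xgg'=\hat y\,\epsilon\,(kg')$. Then $\gamma\in\CO(G)$ as a product of elements of the group $\CO(G)$, and $\gamma=x\,(gg')\in\mathcal X\Gamma^\sharp$; moreover $f(g')=e$ because $g'\in K^\sharp$, so $f(\gamma)=f(xg)f(g')=\delta$, and therefore $f(\st(\gamma))=\st(f(\gamma))=\st(\delta)=y$. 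Hence $y\in f(\st(\mathcal X\Gamma^\sharp))$, as required. I expect the main obstacle to be exactly this correction step: one must recognize that unboundedness of $xg$ can only be repaired by moving inside $\ker f$, and that co-compactness of $\Gamma\cap\ker f$ — which is precisely what the hypothesis ``$f(\Gamma)$ closed'' delivers — allows the correction to be performed within $\Gamma$ without changing the $f$-image; the transfer of co-compactness via $\mathfrak R_{\mathrm{full}}$ (or, alternatively, via an explicit bounded semialgebraic fundamental domain for $\Gamma\cap\ker f$ in Malcev coordinates) is the other point needing a little care.
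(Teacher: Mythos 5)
Your proof is correct, and in the substantive direction $\supseteq$ it takes a genuinely different route from the paper's. The paper corrects the possibly unbounded element $\alpha\gamma^*$ by an arbitrary $\gamma_1^*\in\Gamma^\sharp$ chosen via a compact fundamental domain for $\Gamma$ in all of $G$; since this changes the image under $f$, the closedness of $f(\Gamma)$ is then invoked to write $\st(f(\gamma_1^*))=f(\gamma)$ for some $\gamma\in\Gamma$, and the right-$\Gamma$-invariance of $\st(\CX\Gamma^\sharp)$ is used to undo the unwanted factor. You instead convert the hypothesis up front: $f(\Gamma)$ closed gives $f^{-1}(f(\Gamma))=\Gamma\ker f$ closed, hence by Fact~\ref{rational-facts}(3) the kernel is $\Gamma$-rational, so $\Gamma\cap\ker f$ is co-compact in $\ker f$; you then correct only within $(\Gamma\cap\ker f)^\sharp$, so the $f$-image is untouched and no invariance of $\st(\CX\Gamma^\sharp)$ is needed. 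Both arguments are nonstandard ``bounded fundamental domain'' corrections transferred to $\mathfrak R$, but yours isolates exactly what the hypothesis buys (rationality of the kernel), at the cost of a couple of extra observations you should make explicit if writing this up --- that $\ker f$ is \emph{connected} (true here because $\exp$ identifies it with $\ker df$, so it is a real algebraic normal subgroup to which Fact~\ref{rational-facts}(3) applies) --- whereas the paper's version never mentions the kernel at all.
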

\begin{proof} By Lemma~\ref{homom2}, $f$ is polynomial so in particular definable in
$\Rom$. By Lemma~\ref{homom1},  $f$ sends $\CO(G)$ to $\CO(H)$ and $\mu(G)$ to
$\mu(H)$. It follows that for $\alpha\in \CO(G)$ we have
$f(\st(\alpha))=\st(f(\alpha))$.

Let $D_{\CX,\Gamma}=\st(\mathcal X\,\Gamma^\sharp)$. We need to show that
$f(D_{\CX,\Gamma})=\st(f(\CX) f(\Gamma^\sharp))$.

\medskip

$\subseteq$: If $a=\st(\alpha \gamma^*)\in D_{\CX,\Gamma}$, with $\alpha\in \CX$ and
$\gamma^*\in \Gamma^\sharp$ then $f(a)=\st(f(\alpha
\gamma^*))=\st(f(\alpha)f(\gamma^*)) \in \st(f(\CX) \,f(\Gamma^\sharp))$.

$\supseteq$: Assume that $a_1=\st(f(\alpha)\,f(\gamma^*)),$ for some $\alpha\in \CX$
and $\gamma^*\in \Gamma^\sharp$. We want to show that $a_1\in f(D_{\CX,\Gamma})$.

Since $G/\Gamma$ is compact, there exists a compact semi-algebraic set $K\subseteq G$
such that for every $g\in G$, there exists $\gamma\in \Gamma$ with $g\gamma\in K$.
This remains true for $G^\sharp$, $\Gamma^\sharp$ and $K^\sharp$.  Thus, we can find
$\gamma_1^*\in \Gamma^\sharp$ such that $$(\alpha \gamma^*)\gamma_1^*\in K^\sharp
\subseteq \mathcal O(G).$$ We may therefore  take the standard part and get
$a:=\st(\alpha \gamma^*\gamma_1^*)\in D_{\CX,\Gamma}$. It follows that
$$f(a)=f(\st(\alpha\gamma^*\gamma_1^*))=\st(f(\alpha\gamma^*\gamma_1^*))=\st(f(\alpha)
f(\gamma^*\gamma_1^*))\in \st(f(\CX)\,f(\Gamma^\sharp)).$$

Writing $f(a)$ differently we have
$$f(a)=\st(f(\alpha\gamma^*)f(\gamma_1^*))=\st(f(\alpha\gamma^*))\st(f(\gamma_1^*)).$$
Note that we are allowed to write this since indeed $f(\gamma_1^*)\in \CO(H)$,
because both $f(a)$ and $f(\alpha\gamma^*)$ are in $\CO(H)$). So, the term on the
right equals $a_1 \st(f(\gamma_1^*))$.

Finally, since $f(\Gamma)$ is closed in $H$, we have
$$f(\Gamma)=\st(f(\Gamma)^\sharp)=\st(f(\Gamma^\sharp)),$$ hence
$\st(f(\gamma_1^*))=f(\gamma)$, for some $\gamma\in \Gamma$.

Because $D_{\CX,\Gamma}$ is right-invariant under $\Gamma$, its image is
right-invariant under $f(\Gamma)$ and hence $f(a)f(\gamma)^{-1}=a_1$ is in
$f(D_{\CX,\Gamma})$, as we wanted. \end{proof}

\medskip

 Recall that for a complete type $p\in S_G(\RR)$ we let $A_p$ be the nearest
coset to $p$. We can now prove:

\begin{thm}\label{main2}Assume that $p$ is a type in $S_G(\RR)$. Then
  for every lattice $\Gamma\subseteq G$ we have
$$\st(p(\mathfrak R)\Gamma^\sharp)=\cl(A_p\Gamma).$$
\end{thm}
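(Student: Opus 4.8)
The plan is to prove the two inclusions separately, using Fact~\ref{standard} to translate the closure statement into a statement about standard parts, and using the nearest coset $A_p = gH_p$ together with Theorem~\ref{minimum} and Ratner's Theorem~\ref{ratner} to identify $\cl(A_p\Gamma) = \pi_\Gamma(gH_p^\Gamma)$ lifted to $G$, i.e. $\st(g H_p^\sharp \Gamma^\sharp)$. So both sides are standard parts of orbits under $\Gamma^\sharp$; the game is to compare $p(\mathfrak R)\Gamma^\sharp$ with $gH_p^\sharp\Gamma^\sharp$ modulo $\mu(G)$.

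For the inclusion $\st(p(\mathfrak R)\Gamma^\sharp) \subseteq \cl(A_p\Gamma)$: fix $\alpha \models p$. By definition of the nearest coset, $\alpha \in \mu(G)\, A_p^\sharp = \mu(G)\, gH_p^\sharp$. Hence for any $\gamma^* \in \Gamma^\sharp$ with $\alpha\gamma^* \in \CO(G)$, we get $\alpha\gamma^* \in \mu(G)\, gH_p^\sharp\Gamma^\sharp \cap \CO(G)$, so $\st(\alpha\gamma^*) \in \st(gH_p^\sharp\Gamma^\sharp) = \cl(gH_p\,\Gamma)$. Since $gH_p \subseteq gH_p^\Gamma$ and the latter is $\Gamma$-rational, Ratner's theorem gives $\cl(\pi_\Gamma(gH_p)) = \pi_\Gamma(gH_p^\Gamma)$, i.e. $\cl(gH_p\Gamma) = gH_p^\Gamma\Gamma = \cl(A_p\Gamma)$; this gives the inclusion. (One must be slightly careful: not every $\alpha\gamma^*$ is in $\CO(G)$, but co-compactness of $\Gamma$ lets us always adjust by a further element of $\Gamma^\sharp$ to land in a compact semialgebraic set, exactly as in the proof of Lemma~\ref{claim1}, and $\st$ is only applied after such adjustment.)

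For the reverse inclusion $\cl(A_p\Gamma) \subseteq \st(p(\mathfrak R)\Gamma^\sharp)$: here I expect to use $\Stab^\mu(p)$ and Fact~\ref{mustab-thm}, which is what ties the nearest coset to the $\mu$-type rather than just to a single realization. The key point should be that $H_p$ is (up to the translation by $g$) essentially built from $\Stab^\mu(p)$ or closely controlled by it, so that $\mu(G)\, g\,\Stab^\mu(p)^\sharp\, \subseteq \mu(G)\, p(\mathfrak R)$ — more precisely, that the $\mu$-type $\mu\cdot p$ contains a full coset of (the $\mathfrak R$-points of) the connected group $H_p$ up to infinitesimals, i.e. $g H_p^\sharp \subseteq \mu(G)\, p(\mathfrak R)$. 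Granting this, we get $gH_p^\sharp\Gamma^\sharp \subseteq \mu(G)\, p(\mathfrak R)\,\Gamma^\sharp$, and taking standard parts (again inserting the compactness adjustment by $\Gamma^\sharp$ so that everything stays in $\CO(G)$ before applying $\st$, and using that $\st$ is continuous/that $\st(\mu(G)\mathcal{X}) = \st(\mathcal{X})$) yields $\cl(gH_p\Gamma) = \st(gH_p^\sharp\Gamma^\sharp) \subseteq \st(p(\mathfrak R)\Gamma^\sharp)$. Finally, since the right-hand side $\st(p(\mathfrak R)\Gamma^\sharp)$ is closed (it is a standard part of a definable-over-$\mathfrak R$ set intersected with $\CO$, hence closed) and contains $\cl(A_p\Gamma)$, we are done.

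The main obstacle is the claim in the previous paragraph: that $gH_p^\sharp \subseteq \mu(G)\, p(\mathfrak R)$, equivalently that the group $H_p$ attached to the nearest coset really does saturate the $\mu$-type and is not strictly larger than what $\mu\cdot p$ sees. I would prove this by showing $H_p \subseteq \Stab^\mu(p)$ gives the easy direction and the minimality of $A_p$ from Theorem~\ref{minimum} gives the reverse: if $\beta \in g H_p^\sharp$ then $\beta$ and $\alpha$ have the same nearest coset $A_p$ (cosets near $\alpha$ are exactly cosets near $\beta$, since $\beta \in \mu(G) A_p^\sharp = \mu(G) A_\alpha^\sharp$), and one argues that a realization of $p$ can be found infinitesimally close to $\beta$ using saturation of $\mathfrak R_{\mathrm{full}}$ together with the fact that $\mu\cdot p$ is invariant under left translation by $\Stab^\mu(p) \supseteq H_p$ — this is where the genuine content of \cite{mustab} enters, and where I'd expect the argument to require the most care about which language the types live in and about the distinction between $\mu\cdot p$ being a partial type versus a single coset.
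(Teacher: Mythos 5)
Your first inclusion, $\st(p(\mathfrak R)\Gamma^\sharp)\subseteq \cl(A_p\Gamma)$, is correct and is essentially how the paper disposes of that direction (it is the easy one; note that no adjustment by $\Gamma^\sharp$ is even needed there, since by definition $\st$ is applied only to those elements of $p(\mathfrak R)\Gamma^\sharp$ that already lie in $\CO(G)$). The reverse inclusion is where the entire content of the theorem lies, and the mechanism you propose for it fails. The key claim $gH_p^\sharp\subseteq \mu(G)\,p(\mathfrak R)$ is false already for $G=\RR$ (viewed as a unipotent group): take $p$ to be the type at $+\infty$. No proper coset is near an infinite element, so $A_p=H_p=\RR$, and your claim would assert that every element of $\mathfrak R$ is infinitesimally close to an infinitely large positive element, which fails for $0$. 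Equivalently, your claim is $\Gamma$-free and, after taking standard parts, would yield $\cl(gH_p)\subseteq \st(p(\mathfrak R))$, whereas $\st(p(\mathfrak R))=\emptyset$ for every unbounded complete type. The auxiliary containment $H_p\subseteq\Stab^\mu(p)$ is also false: for $G=\RR^2$ and $p=\tp_{\mathrm{om}}((\tau,\tau^2)/\RR)$ with $\tau$ infinite, one computes $A_p=H_p=\RR^2$ while $\Stab^\mu(p)=\{0\}\times\RR$. And even for $h\in\Stab^\mu(p)$, invariance of $\mu\cdot p$ only gives $h\,p(\mathfrak R)\subseteq\mu(G)\,p(\mathfrak R)$ for \emph{standard} $h\in G(\RR)$, which comes nowhere near covering the nonstandard coset $gH_p^\sharp$.

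The hard inclusion is genuinely a density statement in which the lattice must intervene: in the example of the parabola $(\tau,\tau^2)$ modulo $\ZZ^2$, it is the translates by $(\ZZ^2)^\sharp$ of the germ of the curve at $\tau$ that approach every point, not the curve itself. The paper's proof first reduces (translating by $g^{-1}$, then replacing $G$ by $H_p^\Gamma$ and $\Gamma$ by $H_p^\Gamma\cap\Gamma$) to showing $\st(p(\mathfrak R)\Gamma^\sharp)=G$ when $A_p=H_p$ and $H_p^\Gamma=G$, and proves this by induction on $\dim G$: one quotients by a positive-dimensional central $\Gamma$-rational subgroup contained in the normal closure of $P=\Stab^\mu(p)$ (Lemma~\ref{claim1} transfers the statement along the quotient), and then uses that $D_{p,\Gamma}=\st(p(\mathfrak R)\Gamma^\sharp)$ is simultaneously left-invariant under $P$ and right-invariant under $\Gamma$, hence contains $\cl(gP^g\Gamma)=g(P^g)^\Gamma\Gamma$ for each $g$ in it; Theorem~\ref{ratner} applied to the conjugates $P^g$, together with the Baire-category density of $\{g:(P^g)^\Gamma=N^\Gamma\}$ from Proposition~\ref{neat}, then forces $D_{p,\Gamma}=G$. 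None of these ingredients is dispensable, so the second half of your argument needs to be replaced rather than repaired.
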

\begin{proof} We write $A_p=gH_p$.  To simplify notation we let
  \[ D_{p,\Gamma}=\st(p(\mathfrak R)\Gamma^\sharp).\]

  We first handle  a special case.

\begin{prop} Assume that $A_p=H_p$ is a subgroup of $G$ and that $H_p^\Gamma=G$.
\label{main-step}  Then  $D_{p,\Gamma}=G$.
\end{prop}

\begin{proof}[Proof of Proposition] We prove the proposition by induction on $\dim G$, starting from $\dim G=0$,
for which the result is trivially true. We assume then that $\dim G>0$.

Since $H_p^\Gamma=G$, the group $H_p$ must have positive dimension, hence $p$ is not
a bounded type, so by Fact~\ref{mustab-thm}, the group $P:=\Stab^\mu(p)$ is a
definable subgroup of positive dimension.

We consider the algebraic normal closure of $P$, call it $N$ and then $N^\Gamma$. By
Lemma~\ref{rational}, $N^\Gamma$ is  normal, hence it is the minimal
normal $\Gamma$-rational subgroup of $G$
containing $P$. Since $G$ is nilpotent, the intersection any nontrivial normal
subgroup with the center  $Z(G)$ is nontrivial (see for example \cite[Proposition
7.13]{stroppel}), so  $N_0=N^\Gamma\cap Z(G)$ is nontrivial. Since $G$ is
torsion-free, $N_0$ is a real algebraic subgroup of positive dimension, so $\dim
G/N_0<\dim G$.

We consider the quotient map
$$f:G\to G/N_0.$$
The group $G/N_0$ is again a connected, simply connected nilpotent Lie group and
hence Lie isomorphic to a real unipotent group. By Lemma~\ref{homom2}, the
composition of this isomorphism with $f$ is a polynomial map. Thus, we identify
$G/N_0$ with a real unipotent group, and still denote the homomorphism from $G$ onto
this unipotent group by $f$.

 We let $q$ be the image of the type $p$ under $f$. By that we mean that
 for some (equivalently any)
$\alpha\in p(\mathfrak R)$ we let $q=\tp_{om}(f(\alpha)/\RR) \vdash G/N_0$. We let
$\Gamma_1=f(\Gamma)$. Since both $Z(G)$ and $N^\Gamma$ are $\Gamma$-rational then so
is $N_0$. It follows that $\Gamma_1$ is a lattice  in $G/N_0$ (for both, see Fact~\ref{rational-facts}).

 Let $A_q=g_q H_q$  be the nearest coset of $q$.  We claim that $A_q^{\Gamma_1}=G/N_0$, namely $H_q^{\Gamma_1}=G/N_0$. Indeed, first
note that by Lemma~\ref{homom1}, we have
 $f(A_p)=A_q$,
so $f(H_p)=A_q$ and hence $A_q=H_q$ is a group. Next, since $N_0$ is
$\Gamma$-rational the pre-image under $f$ of the $\Gamma_1$-rational group
$H_q^{\Gamma_1}$ is a $\Gamma$-rational subgroup of $G$ containing $H_p$, so by our
assumptions on $p$ it equals to $G$. It follows that $H_q^{\Gamma_1}=G/N_0$.

Since $\dim G/N_0<\dim G$, we may apply induction  to $q\vdash G/N_0$ and $\Gamma_1$
and conclude that $\st(q(\mathfrak R)\Gamma_1^\sharp)=G/N_0$. Therefore, by  Lemma~\ref{claim1},
$$f(D_{p,\Gamma})=G/N_0.$$

Next, we    claim that   $D_{p,\Gamma}$ is left-invariant under $P=\Stab^\mu(p)$.
Indeed, if $a\in D_{p,\Gamma}=\st(p(\mathfrak R)\Gamma^\sharp)$ then $a=\epsilon \alpha \gamma^*$ for
$\epsilon\in \mu(G)$, $\alpha\in p(\mathfrak R)$ and $\gamma^*\in\Gamma^\sharp$. By
definition, for every $h\in P$, there exists $\epsilon'\in \mu(G)$ and $\alpha'\in
p(\mathfrak R)$ such that $h\alpha=\epsilon'\alpha'$. But then, for some
$\epsilon''\in \mu(G)$,
$$ha=h\epsilon \alpha \gamma^*=\epsilon'' h \alpha \gamma^*=\epsilon''\epsilon'
\alpha'\gamma^*.$$ Since $ha\in G$, we have $ha =\st(ha)=\st(\alpha'\gamma^*)\in
D_{p,\Gamma}$, so $D_{p,\Gamma}$ is left-invariant under $P$.

By definition, $D_{p,\Gamma}$ is also right-invariant under $\Gamma$.

We now
consider the set
$$Y=\{g\in G: (P^g)^\Gamma=N^\Gamma\}.$$

By Proposition~\ref{neat}, the set $Y$ is dense in $G$.
\\

\noindent{\bf Claim}
\emph{The set $Y$ is contained in $D_{p,\Gamma}$.}

\begin{proof}[Proof of Claim.]
  We will show that $Y\cap D_{p,\Gamma}$ is
left-invariant under $N_0=\ker(f)$ and that $f(Y\cap D_{p,\Gamma})=f(Y)$. The result
follows (since we conclude that $Y= Y\cap D_{p,\Gamma}$).

 First, let us note that $N_0Y=Y$: Because $N_0$ is central, for every $n\in N_0$ and  $g\in G$, $P^g=P^{ng}$,
 so by the definition of $Y$, if $g\in Y$ then so is $ng$.

 In order to show that $Y\cap D_{p,\Gamma}$ is left-invariant under $N_0$ it is enough to show that
 for every $g\in Y\cap D_{p,\Gamma}$,
we have  $N_0g\subseteq D_{p,\Gamma}$. So fix $g\in Y\cap D_{p,\Gamma}$.

Since $D_{p,\Gamma}$ is  left-invariant under $P$ and right-invariant under
$\Gamma$, we have  $Pg\Gamma=gP^g\Gamma \subseteq D_{p,\Gamma}$. Because it is also
closed, we have $\cl(gP^g\Gamma)\subseteq D_{p,\Gamma}$. Since $g\in Y$,
$$\cl(P^g\Gamma)=(P^g)^\Gamma\Gamma=N^\Gamma\Gamma,$$ and hence
$$gN^\Gamma \Gamma=\cl(gP^g\Gamma)\subseteq   D_{p,\Gamma}.$$

 Because $N_0\subseteq N^\Gamma$ and is normal in $G$,  we
have
$$N_0g =gN_0\subseteq gN_0^\Gamma\subseteq D_{p,\Gamma},$$ thus completing the proof
 that $Y\cap D_{p,\Gamma}$ is left-invariant under $N_0$.

Now, since $N_0Y=Y$, we have $f(Y\cap D_{p,\Gamma})=f(Y)\cap f(D_{p,\Gamma})$. We
already saw that $f(D_{p,\Gamma})=G/N_0$, and therefore $f(Y\cap
D_{p,\Gamma})=f(Y)$. Because $Y\cap D_{p,\Gamma}$ is left-invariant under $N_0$ it
follows that $Y\subseteq D_{p,\Gamma}$, completing the proof of the claim.
\end{proof}

Because $Y$ is dense in $G$ and $D_{p,\Gamma}$ is closed we have
 $D_{p,\Gamma}=G$. This ends the proof of Proposition~\ref{main-step}.\end{proof}

 In order to
complete the proof of Theorem~\ref{main2}, consider now an arbitrary type $p\in
S_G(\RR)$, with $A_p=gH_p$. By replacing $p$ with $g^{-1}p$ and $D_{p,\Gamma}$ with
$D_{g^{-1}p,\Gamma}=g^{-1}D_{p,\Gamma}$, we may assume that $A_p=H_p$. For every
$\alpha\in p(\mathfrak R)$ there is $\epsilon\in \mu(G)$ such that $\epsilon
\alpha\in H_p^\sharp$. Since $\st(\epsilon \alpha)=\st(\alpha)$, replacing $\alpha$ with
$\epsilon\alpha$ we may assume that $p\vdash H_p$, and thus $\st(p(\mathfrak
R)\Gamma)\subseteq \cl(H_p\Gamma)=H_p^\Gamma\Gamma$.

Let $G_0=H_p^\Gamma$ and $\Gamma_0=G_0\cap \Gamma$,  a lattice in $G_0$. Notice that
$\cl(H_p\Gamma_0)=H_p^{\Gamma_0}\Gamma_0=H_p^\Gamma=G_0$. Thus, in order to prove
the theorem it is sufficient to show that $\st(p(\mathfrak R)\Gamma_0^\sharp)=G_0$.
This is exactly Proposition~\ref{main-step} (for $G_0$ and $\Gamma_0$ instead of $G$
and $\Gamma$), so we are done.\end{proof}

\medskip
Returning to the setting of Theorem~\ref{thm-main2.5}, we start with a
given definable set $X\subseteq G$, and define the associated family of nearest cosets:
$$\CA(X)=\{A_\alpha:\alpha\in X^\sharp\}.$$

By Lemma~\ref{bounded}, the $0$-dimensional elements of $\CA(X)$ are exactly the
singletons $\{g\}$ for $g\in G$.

 For $\alpha\in X^\sharp$, let $A_\alpha=g_\alpha H_\alpha$, where $g_\alpha$ is any element in $A_\alpha$. For every
  lattice $\Gamma\subseteq G$,
we have
$$\cl(A_\alpha \Gamma)=\cl(g_\alpha H_\alpha \Gamma)= g_\alpha
(H_\alpha)^\Gamma \Gamma.$$

We let $A_\alpha^\Gamma$ denote the coset $g_\alpha H_\alpha^\Gamma$.
 We can now describe the closure of $\pi_{\Gamma}(X)$ as follows:

\begin{cor}\label{maincor-cosets}
For every lattice $\Gamma\subseteq G$,
$$\cl(X\Gamma)=\bigcup_{\alpha\in X^\sharp} g_\alpha (H_\alpha)^\Gamma \Gamma=\bigcup_{\alpha\in X^\sharp} A_\alpha^\Gamma \Gamma,$$
and
$$\cl(\pi_\Gamma(X))=\bigcup_{\alpha\in X^\sharp}\pi_\Gamma(g_\alpha
H_\alpha^\Gamma)=\bigcup_{\alpha\in X^\sharp} \pi_\Gamma(A_\alpha^\Gamma).$$
\end{cor}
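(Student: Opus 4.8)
The plan is to deduce this directly from Fact~\ref{standard} and Theorem~\ref{main2} by decomposing $X^\sharp$ according to the complete $\CL_{\mathrm{om}}$-types it realizes. First I would write, using Fact~\ref{standard},
$$\cl(X\Gamma)=\st(X^\sharp\Gamma^\sharp).$$
Now every $\alpha\in X^\sharp$ satisfies $\alpha\in p(\mathfrak R)$ for $p=\tp_{\mathrm{om}}(\alpha/\RR)$, and since $p\vdash X$ we have $p(\mathfrak R)\subseteq X^\sharp$; conversely $X^\sharp=\bigcup_{p\vdash X,\,p\in S_G(\RR)}p(\mathfrak R)$, the union taken over all complete types $p$ on $X$. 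Hence
$$X^\sharp\Gamma^\sharp=\bigcup_{p\vdash X}p(\mathfrak R)\Gamma^\sharp,$$
and since the standard part map distributes over arbitrary unions,
$$\st(X^\sharp\Gamma^\sharp)=\bigcup_{p\vdash X}\st(p(\mathfrak R)\Gamma^\sharp).$$

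Next I would apply Theorem~\ref{main2} to each complete type $p\vdash X$: it gives $\st(p(\mathfrak R)\Gamma^\sharp)=\cl(A_p\Gamma)$. Writing $A_p=g_\alpha H_\alpha$ for a representative $\alpha\in p(\mathfrak R)$ (the coset $A_p$ depends only on $\tp_{\mathrm{sa}}(\alpha/\RR)$, a fortiori only on $p$, so this is well-defined), Theorem~\ref{ratner} together with the remark preceding the corollary identifies $\cl(A_p\Gamma)=\cl(g_\alpha H_\alpha\Gamma)=g_\alpha H_\alpha^\Gamma\Gamma=A_\alpha^\Gamma\Gamma$. Since the types $p\vdash X$ are in bijection with the distinct cosets $A_\alpha$ arising from $\alpha\in X^\sharp$ — and in any case re-indexing the union over $\alpha\in X^\sharp$ rather than over $p$ only repeats terms — we obtain
$$\cl(X\Gamma)=\bigcup_{\alpha\in X^\sharp}A_\alpha^\Gamma\Gamma=\bigcup_{\alpha\in X^\sharp}g_\alpha H_\alpha^\Gamma\Gamma,$$
which is the first displayed equality.

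For the second equality I would simply push the first one through $\pi_\Gamma$. Since $\pi_\Gamma$ is a closed map (its fibers are the $\Gamma$-cosets and $G/\Gamma$ is compact, so $\pi_\Gamma$ is proper, hence closed), one has $\cl(\pi_\Gamma(X))=\pi_\Gamma(\cl(X\Gamma))$: indeed $\pi_\Gamma(X)=\pi_\Gamma(X\Gamma)$, so $\cl(\pi_\Gamma(X))=\cl(\pi_\Gamma(X\Gamma))=\pi_\Gamma(\cl(X\Gamma))$ by closedness, using also that $X\Gamma$ is $\Gamma$-invariant so that $\cl(X\Gamma)$ is too and $\pi_\Gamma^{-1}\pi_\Gamma(\cl(X\Gamma))=\cl(X\Gamma)$. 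Applying $\pi_\Gamma$ to the union and using $\pi_\Gamma(A_\alpha^\Gamma\Gamma)=\pi_\Gamma(A_\alpha^\Gamma)=\pi_\Gamma(g_\alpha H_\alpha^\Gamma)$ yields
$$\cl(\pi_\Gamma(X))=\bigcup_{\alpha\in X^\sharp}\pi_\Gamma(g_\alpha H_\alpha^\Gamma)=\bigcup_{\alpha\in X^\sharp}\pi_\Gamma(A_\alpha^\Gamma).$$
I do not anticipate a serious obstacle here: the entire content is Theorem~\ref{main2}, and the only points requiring a line of care are (i) that $\st$ commutes with arbitrary unions and that $X^\sharp$ is the union of the realizations of the complete types on $X$, and (ii) that $\pi_\Gamma$ is a closed map so that closure commutes with $\pi_\Gamma$ on $\Gamma$-invariant sets — both of which are routine.
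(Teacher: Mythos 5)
Your proof is correct and follows essentially the same route as the paper's: write $\cl(X\Gamma)=\st(X^\sharp\Gamma^\sharp)$ via Fact~\ref{standard}, decompose over complete types on $X$, apply Theorem~\ref{main2} to each, and re-index the union over $\alpha\in X^\sharp$ using that $A_\alpha$ depends only on the type of $\alpha$. The paper leaves the passage to $\pi_\Gamma$ implicit (via the standard correspondence between $\Gamma$-invariant closed subsets of $G$ and closed subsets of $G/\Gamma$); your explicit justification of that step is fine.
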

\begin{proof} As we saw, $$\cl(X\Gamma)=\st(X^\sharp\Gamma^\sharp)=\bigcup_{p\vdash
X}\st(p(\mathfrak R)\Gamma^\sharp).$$ By Theorem~\ref{main2}, we have
$$\cl(X\Gamma)=\bigcup_{p\vdash X} (A_p^\Gamma)\Gamma.$$
Since $A_\alpha=A_\beta$ whenever $\alpha $ and $\beta$ realize the same complete
type, we can write the same union as $\bigcup_{\alpha\in
X^\sharp}A_\alpha^\Gamma \,\Gamma$. The result follows.\end{proof}

\subsection{An alternative definition of $\CA(X)$.}
\label{sec:an-altern-defin}

In this section we give an alternative definition of $\CA(X)$. This
definition is not used anywhere else, so we will be brief.

As before, $G$ is a real unipotent group.

Viewing  $\mathrm{GL}(n,\RR)$ as a subset of $\RR^{n^2}$, we
denote by $\| \ \|_G$ the restriction of the Euclidean norm on  $\RR^{n^2}$ to
$G$.

For $a,b\in G$ let $d_G(a,b)=\| ab^{-1} - I_n \|_G$.

Let $X\subseteq G$ be a definable set. In this section  by \emph{a definable curve on $X$} we mean a
definable continuous  function $\sigma(t)\colon \RR^{\geq 0}\to X$.

Let $\sigma(t)$ be a definable curve on $G$.  For a coset $aH
\subseteq G$ of a real algebraic group $H$ we say that $aH$ is \emph{near
$\sigma(t)$}  if $\lim_{t\to \infty} d_G(\sigma(t), aH) =0$, where, as
usual,     $d_G(\sigma(t), aH)=\inf\{ d_G(\sigma(t), g) \colon g\in aH
\}$.

Applying Theorem~\ref{minimum} to an infinitely large $t$ we obtain the
following claim.

\begin{claim}
  \label{claim:near-curve}
Let $\sigma(t)$ be a definable curve on $G$.  Let $g_1H_1,
g_2H_2\subseteq G$ be cosets of real algebraic subgroups. If both
$g_1H_1$ and
$g_2H_2$ are near $\sigma(t)$ then $g_1H_1\cap g_2H_2 \neq \emptyset$ and
the coset $g_1H_1\cap g_2H_2$ is near $\sigma(t)$ as well.
\end{claim}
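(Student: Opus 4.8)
The plan is to reduce Claim~\ref{claim:near-curve} directly to Theorem~\ref{minimum} by passing to the non-standard extension and evaluating $\sigma$ at an infinite element. Concretely, I would work in $\mathfrak{R}_{\mathrm{om}}$, fix some $t_0 \in \mathfrak{R}$ with $t_0 > n$ for all $n \in \NN$ (an infinitely large positive element), and set $\alpha = \sigma^\sharp(t_0) \in G^\sharp$, where $\sigma^\sharp$ is the interpretation of the definable curve $\sigma$ in the elementary extension. The key bridge is the translation: \emph{a coset $aH$ is near $\sigma(t)$ in the $\lim_{t\to\infty}$ sense of this subsection if and only if $aH$ is near $\alpha$ in the sense of Section~\ref{sec-nearest}}, i.e. $\alpha \in \mu(G)\, aH^\sharp$.

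The first step is to establish that equivalence. By definition $d_G(\sigma(t), aH) \to 0$ as $t \to \infty$ is a first-order statement about $\sigma$, $a$, $H$ over $\RR$: for every $\RR$-rational $\epsilon > 0$ there is $T$ such that $d_G(\sigma(t), aH) < \epsilon$ for all $t > T$. Transferring to $\mathfrak{R}$ and applying at the infinite element $t_0$, we get $d_G(\alpha, aH^\sharp) < \epsilon$ for every standard $\epsilon > 0$, hence $d_G(\alpha, aH^\sharp) \in \mu(\mathfrak{R})$. Since $aH^\sharp$ is the $\sharp$-extension of a closed semialgebraic set and the distance is infinitesimal, saturation (or Fact~\ref{standard}) gives an element $\beta \in aH^\sharp$ with $\|\alpha\beta^{-1} - I_n\|$ infinitesimal, i.e. $\alpha\beta^{-1} \in \mu(G)$, so $\alpha \in \mu(G)\, aH^\sharp$. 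The converse is analogous and in fact easier. I would want to be a little careful that $d_G(\cdot,\cdot)$ is a genuine semialgebraic ``distance'' comparable to the Euclidean metric so that ``$d_G$ infinitesimal'' matches ``$\in \mu(G)$''; this is immediate from the definition $d_G(a,b) = \|ab^{-1} - I_n\|_G$ together with continuity of group multiplication.

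With the equivalence in hand, the conclusion is immediate: if $g_1H_1$ and $g_2H_2$ are near $\sigma(t)$, then both are near $\alpha$ in the sense of Section~\ref{sec-nearest}, so by Theorem~\ref{minimum}(1) we have $g_1H_1 \cap g_2H_2 \neq \emptyset$ and the coset $g_1H_1 \cap g_2H_2$ is near $\alpha$; note $g_1H_1 \cap g_2H_2$ is itself a coset of the real algebraic subgroup $H_1 \cap H_2$. Translating back via the equivalence, $g_1H_1 \cap g_2H_2$ is near $\sigma(t)$, which is the claim.

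The main (and only real) obstacle is the first-order/transfer bookkeeping in the first step: making sure that the $\lim_{t\to\infty}$ condition, which quantifies over standard $\epsilon$, correctly pins down the infinitesimal-distance condition at $t_0$, and that moving from ``distance to the coset is infinitesimal'' to ``lies in $\mu(G)$ times the coset'' is justified — the latter uses that $aH$ is closed and an application of saturation exactly as in the proof of Fact~\ref{standard}. Everything else is a direct citation of Theorem~\ref{minimum}, so the proof should be short.
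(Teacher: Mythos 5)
Your proposal is correct and is exactly the paper's route: the paper justifies the claim with the single line ``applying Theorem~\ref{minimum} to an infinitely large $t$,'' and you have simply filled in the transfer bookkeeping identifying ``$\lim_{t\to\infty}d_G(\sigma(t),aH)=0$'' with ``$\sigma^\sharp(t_0)\in\mu(G)\,aH^\sharp$'' for infinite $t_0$. (For the converse direction, note that definability of $t\mapsto d_G(\sigma(t),aH)$ in $\Rom$ guarantees the limit exists, so its value is pinned down by the standard part at the single infinite point $t_0$.)
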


Thus if $\sigma(t)$  is a definable curve on $G$ then there is the
smallest coset near $\sigma(t)$ that we denote by $A_\sigma$ and call
it \emph{the nearest coset to $\sigma(t)$}.

Working in the tame pair $\la \Rom\la \tau\ra, \Rom  \ra$, where
$\Rom\la \tau\ra=\operatorname{dcl} (\Rom\cup \{ \tau \})$ for an
infinitely large $\tau$, we can redefine $\CA(X)$  as follows.

\begin{prop}\label{prop:main-restatement}  Let $X\subseteq G$ be a
  definable subset. Then
  \[
    \CA(X)=\bigcup\{ A_\sigma  \colon \sigma(t)
  \text{ is a definable curve on } X \}.
\]

\end{prop}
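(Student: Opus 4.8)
The plan is to prove the two inclusions separately; ``$\supseteq$'' will be essentially immediate, and ``$\subseteq$'' will require a curve‑selection argument at infinity. The workhorse is the precise form of the slogan ``apply Theorem~\ref{minimum} to an infinitely large $t$'': for a definable curve $\sigma(t)$ on $G$ and an infinitely large $\tau$, I claim that a coset $aH$ of a real algebraic subgroup is near $\sigma(t)$ in the sense of this subsection if and only if $aH$ is near $\sigma(\tau)\in G^\sharp$ in the sense of Section~\ref{sec-nearest}, and hence $A_\sigma=A_{\sigma(\tau)}$. To see this I would first record the elementary equivalence, for $\beta\in G^\sharp$ and a real algebraic coset $aH$, that $\beta\in\mu(G)\,aH^\sharp$ iff $d_G(\beta,aH^\sharp)\in\mu(\mathfrak R)$: the forward direction is clear since $\|\epsilon-I_n\|\in\mu(\mathfrak R)$ for $\epsilon\in\mu(G)$, and for the converse I would use that the infimum defining $d_G(\beta,aH^\sharp)$ is attained — the map $g\mapsto\beta g^{-1}$ is a homeomorphism of $G^\sharp$ and hence proper on the closed set $aH^\sharp$ — so a nearest point $\beta_0\in aH^\sharp$ satisfies $\beta\beta_0^{-1}\in\mu(G)$. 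Then, since $t\mapsto d_G(\sigma(t),aH)$ is an $\Rom$‑definable function, o‑minimality gives $\lim_{t\to\infty}d_G(\sigma(t),aH)=0$ iff $d_G(\sigma(\tau),aH^\sharp)<1/n$ for all $n$ iff $d_G(\sigma(\tau),aH^\sharp)\in\mu(\mathfrak R)$. Comparing the two statements, the cosets near $\sigma(t)$ are exactly the cosets near $\sigma(\tau)$, so the smallest on each side agrees and $A_\sigma=A_{\sigma(\tau)}$ (this is also what underlies Claim~\ref{claim:near-curve}).

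Granting this, ``$\supseteq$'' is immediate: if $\sigma$ is a definable curve on $X$ then $\sigma(\tau)\in X^\sharp$, so $A_\sigma=A_{\sigma(\tau)}\in\CA(X)$. For ``$\subseteq$'', fix $\alpha\in X^\sharp$ and write $A_\alpha=gH$; since $g\in G$ and $H$ is real algebraic this coset is defined over $\RR$, and after replacing $X$ by $g^{-1}X$ and $\alpha$ by $g^{-1}\alpha$ I may assume $A_\alpha=H$. If $\alpha\in\CO(G)$ then by Lemma~\ref{bounded} $H=\{\st(\alpha)\}$ is a single point of $\cl(X)$, and ordinary curve selection in $\Rom$ produces a definable curve $\sigma$ on $X$ with $\sigma(t)\to\st(\alpha)$; then $\sigma(\tau)\in\CO(G)$ has standard part $\st(\alpha)$, so $A_\sigma=A_{\sigma(\tau)}=\{\st(\alpha)\}=A_\alpha$. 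So assume henceforth $\alpha\notin\CO(G)$, hence $k:=\dim H\ge1$.

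The useful reduction in the unbounded case is the following: using Theorem~\ref{minimum} together with the existence (as in Claim~\ref{proper-normal}) of a chain of real algebraic subgroups of $H$ with dimensions increasing by one, one checks that for $x\in\mu(G)\,H^\sharp$ one has $A_x=H$ \emph{if and only if} $x\notin\mu(G)\,W^\sharp$ for every coset $W$ over $\RR$ of a $(k-1)$‑dimensional subgroup with $W\subseteq H$ properly — indeed, if $A_x=cH'\subsetneq H$, interpolating $H'\subseteq H''\subseteq H$ with $\dim H''=k-1$ produces such a $W=cH''$ with $x\in\mu(G)\,W^\sharp$. By Fact~\ref{all-cosets} these $W$'s form an $\Rom$‑definable subfamily $\{W(\bar b):\bar b\in\Theta\}$ of $\CF_n$, with $\Theta$ an $\Rom$‑definable parameter set. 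Thus it suffices to produce a definable curve $\sigma$ on $X$ with $d_G(\sigma(t),H)\to0$ and with $\lim_{t\to\infty}d_G(\sigma(t),W(\bar b))>0$ for every $\bar b\in\Theta$: for such $\sigma$ the translate $\sigma(\tau)$ lies in $\mu(G)\,H^\sharp$, no $\mu(G)\,W(\bar b)^\sharp$ contains it, and so $A_\sigma=A_{\sigma(\tau)}=H=A_\alpha$.

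The genericity clause — that the curve approach $H$ but no smaller real sub‑coset — is the main obstacle, since a curve converging to $H$ could a priori ``collapse'' onto one of the $W(\bar b)$'s. I would dispose of it in two steps. First, curves on $X$ converging to $H$ do exist: comparing the infimum of the $\Rom$‑definable function $x\mapsto d_G(x,H)$ over $X$ as computed in $\Rom$ and in $\mathfrak R_{\mathrm{om}}$, and using that $\alpha\in X^\sharp\cap\mu(G)\,H^\sharp$ forces this infimum over $X^\sharp$ to be infinitesimal, one gets that its value over $X$ is $0$, and definable choice then yields such a curve. Second, for the genericity I would pass to a semialgebraic compactification of $G$, so that ``converging to $H$'' and the ``direction at infinity'' of a curve become conditions at a finite boundary point; note that $\alpha$ (equivalently a nearest point $\alpha_0\in H^\sharp$ to $\alpha$) witnesses that $X$ approaches $H$ along a direction not coming from any $W(\bar b)$; and then run a genericity/curve‑selection argument against the $\Rom$‑definable family $\{W(\bar b):\bar b\in\Theta\}$, in the spirit of the density argument in the proof of Proposition~\ref{neat} but with an o‑minimal dimension count in place of Baire category, to choose a curve $\sigma$ on $X$ converging to $H$ whose germ at infinity avoids every ``bad'' condition $\lim d_G(\cdot,W(\bar b))=0$. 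This last step is where I expect the real work to lie; once it is in place, the proof is complete.
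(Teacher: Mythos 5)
Your translation between the two notions of ``near'' (the limit condition $\lim_{t\to\infty}d_G(\sigma(t),aH)=0$ versus $\sigma(\tau)\in\mu(G)\,aH^\sharp$ for infinitely large $\tau$) is correct, and with it the inclusion $\supseteq$ and the bounded case go through; this is exactly what the paper extracts from Theorem~\ref{minimum} in Claim~\ref{claim:near-curve}. The genuine gap is in the inclusion $\subseteq$: for $\alpha\in X^\sharp$ whose type over $\RR$ has dimension $\geq 2$, you must exhibit a \emph{definable curve} on $X$ (equivalently, a type of dimension $\leq 1$ on $X$) with the same nearest coset, and this is precisely the step you defer (``this last step is where I expect the real work to lie''). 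That step is the entire content of the inclusion, so the proof is not complete. Moreover, the strategy you sketch for it is unlikely to close as stated: the family of ``bad'' codimension-one subcosets $W(\bar b)$, $\bar b\in\Theta$, is in general uncountable, so Baire category does not apply to the family, and the bad conditions $\lim_{t}d_G(\sigma(t),W(\bar b))=0$ are conditions on germs of curves at infinity (equivalently, on types over $\RR$), not on points of $X$; there is no definable set in which your proposed ``o-minimal dimension count'' could take place.

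What the paper is pointing to with the phrase ``working in the tame pair $\la \Rom\la\tau\ra,\Rom\ra$'' is a different mechanism that does close the gap. Both $\la\mathfrak R_{\mathrm{om}},\Rom\ra$ and $\la\Rom\la\tau\ra,\Rom\ra$ are tame pairs over the same small model, hence elementarily equivalent over $\RR$ by the completeness of the theory of tame pairs (see \cite{lou-limit}). Using the semialgebraic family of all cosets from Fact~\ref{all-cosets}, the condition ``the coset coded by the real parameter $q$ is the nearest coset of some element of $X^\sharp$'' is a single formula of the pair language with parameters in $\RR$ (``near'' becomes ``at distance less than $r$ for every positive $r$ in the predicate for the small model''). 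Evaluating this formula in $\la\mathfrak R_{\mathrm{om}},\Rom\ra$ yields $\CA(X)$ as defined, while evaluating it in $\la\Rom\la\tau\ra,\Rom\ra$ --- where every element of $X(\Rom\la\tau\ra)$ is either in $X(\RR)$ or of the form $\sigma(\tau')$ for a definable curve $\sigma$ on $X$ and infinitely large $\tau'$ --- yields exactly $\bigcup_\sigma A_\sigma$; elementary equivalence over $\RR$ identifies the two. If you prefer your hands-on route, you must actually prove the genericity claim; as written, your argument establishes only the inclusion $\supseteq$.
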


\subsection{Digression, the connection to the work of Leibman and Shah}

\label{Leibman-sec}
 Our goal here is to deduce Theorem~\ref{Leibman} from Corollary~\ref{maincor-cosets}. Before doing that, we briefly discuss the
connection between our  notion of ``a polynomial map''
and that of \cite{Leibman1}.

Given $G$ a connected, simply connected nilpotent Lie group,
let $a_1,\ldots, a_n$ be some elements of $G$, and  let $p:\RR^d \to \RR^n$ be a
polynomial map, such that $p(\ZZ^d)\subseteq \ZZ^n$. Then the map $f:\ZZ^d\to G$, defined
by $$f(\bar k)=a_1^{p_1(\bar k)}\cdots a_n^{p_n(\bar k)}$$ is said to be a
polynomial map in \cite {Leibman1}. Note that this definition is invariant under an
isomorphism of $G$ thus we may assume that $G$ is a real unipotent group. By
Lemma~\ref{homom2} (2), there is a map $F\colon \RR^d \to G$, polynomial in matrix
coordinates,  such that $f(\bar k) = F(\bar k)$ for $\bar k  \in \ZZ^d$.

\medskip
We prove:
\begin{thm}\label{Leibman-Shah} Let $G$ be a real unipotent group.
Assume that $f:\RR^d\to G$ is a polynomial map in matrix coordinates and let
$X=f(\RR^d)\subseteq G$. Let $gH$ be the minimal coset among all left cosets of real algebraic subgroups of
$G$ with $X\subseteq gH$. Then for every lattice $\Gamma\subseteq G$,
$$\cl(\pi_{\Gamma}(X))=\pi_\Gamma(gH^\Gamma).$$
\end{thm}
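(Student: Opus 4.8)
The plan is to derive the statement directly from Corollary~\ref{maincor-cosets}, which tells us that $\cl(\pi_\Gamma(X)) = \bigcup_{\alpha\in X^\sharp} \pi_\Gamma(A_\alpha^\Gamma)$, where $A_\alpha = g_\alpha H_\alpha$ is the nearest coset to $\alpha$ and $A_\alpha^\Gamma = g_\alpha H_\alpha^\Gamma$. So it suffices to analyze the family $\CA(X) = \{A_\alpha : \alpha \in X^\sharp\}$ when $X = f(\RR^d)$ is the image of a polynomial map, and to show that (a) every $A_\alpha$ is contained in $gH$, and (b) the coset $gH$ itself appears (up to having $(gH)^\Gamma$ as the appropriate closure), so that the union $\bigcup_\alpha \pi_\Gamma(A_\alpha^\Gamma)$ collapses to $\pi_\Gamma(gH^\Gamma)$.

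First I would establish the containment $A_\alpha \subseteq gH$ for every $\alpha\in X^\sharp$. The key observation is that nearest cosets are, in a suitable sense, monotone: if $\alpha \in \mu(G)\,\gamma\,(gH)^\sharp$ — which holds here since $\alpha\in X^\sharp\subseteq (gH)^\sharp$ with $\gamma=e$ (indeed $X\subseteq gH$ gives $X^\sharp \subseteq (gH)^\sharp$) — then $gH$ is a coset near $\alpha$, and by the minimality clause in Theorem~\ref{minimum}(2), $A_\alpha\subseteq gH$. Thus $A_\alpha^\Gamma = g_\alpha H_\alpha^\Gamma \subseteq g (H^\Gamma)$ by monotonicity of the $(-)^\Gamma$ operation on subgroups (if $H_\alpha \subseteq$ a conjugate/translate inside $gH$, then $H_\alpha^\Gamma\subseteq H^\Gamma$ since $H^\Gamma$ is a $\Gamma$-rational group containing $H_\alpha$). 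Hence $\cl(\pi_\Gamma(X)) \subseteq \pi_\Gamma(gH^\Gamma)$.

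For the reverse inclusion I would use the minimality of $gH$: since $gH$ is the smallest coset of a real algebraic subgroup containing $X$, the set $X$ is not contained in any proper subcoset. This forces, via Proposition~\ref{prop:main-restatement} or a direct type-theoretic argument, that $\la X\ra$ generates, after translating by $g^{-1}$, all of $H$ — more precisely, the cosets $A_\alpha$ as $\alpha$ ranges over $X^\sharp$ cannot all lie in a common proper subcoset of $gH$, so the algebraic group generated by the various $g^{-1}g_\alpha$ and $H_\alpha$ is all of $H$. Picking a large type $\alpha$ (using that $f$ is polynomial and hence $X$ is unbounded when $\dim H > 0$, so there is an unbounded $\alpha\in X^\sharp$ with $\dim A_\alpha > 0$ by Lemma~\ref{bounded}), one shows the union $\bigcup_\alpha \pi_\Gamma(A_\alpha^\Gamma)$ fills up $\pi_\Gamma(gH^\Gamma)$. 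The cleanest route is: translating by $g^{-1}$ reduces to the case $g=e$, $H$ minimal with $e\in X$ — wait, rather $X\subseteq H$ with $H$ minimal — and then apply Theorem~\ref{ratner}/Theorem~\ref{main2} together with the fact (to be extracted from the polynomial structure of $f$, as in Shah and Leibman) that the ``linear part at infinity'' of $f$ already spans the Lie algebra $\h$, so that $\cl(\pi_\Gamma(X)) = \cl(\pi_\Gamma(H)) = \pi_\Gamma(H^\Gamma)$.

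The main obstacle is the reverse inclusion, specifically showing that minimality of $gH$ as a coset containing $X$ translates into the family $\CA(X)$ being ``large enough'' — that some $A_\alpha$ already equals a translate of $H$ or that the $A_\alpha$'s jointly generate $H$. For this I expect to need the polynomiality of $f$ in an essential way (the corresponding statement is false for general definable $X$, cf.\ the remark about $C\times C$ in the introduction): the Taylor/degree structure of $f$ ensures that the directions in which $f$ escapes to infinity are ``unbounded in every coordinate direction spanned by $\h$'', which is precisely what prevents the nearest cosets from degenerating to a proper subcoset. I would formalize this by choosing coordinates adapted to a weak Malcev basis (Lemma~\ref{complement-to-group}) and tracking the leading behavior of $f$ along a generic curve $t\mapsto f(t\bar v)$ to infinity, then invoking Proposition~\ref{prop:main-restatement} to conclude $A_\sigma = $ a full translate of $H$ for a suitable curve $\sigma$ on $X$.
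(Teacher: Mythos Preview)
Your plan correctly identifies the overall architecture: invoke Corollary~\ref{maincor-cosets}, observe $A_\alpha\subseteq gH$ for every $\alpha\in X^\sharp$ to get one inclusion, and then for the reverse inclusion argue that the family $\CA(X)$ is ``large enough''. The easy inclusion is fine as you wrote it. The gap is in the reverse inclusion, and it is a real one.

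Your two proposed mechanisms both fall short. First, knowing that the $A_\alpha$'s do not all lie in a common proper subcoset of $gH$, or that the groups $H_\alpha$ together generate $H$, does \emph{not} imply $\bigcup_\alpha \pi_\Gamma(A_\alpha^\Gamma)=\pi_\Gamma(gH^\Gamma)$; a union of sub-nilmanifolds whose underlying groups generate $H$ can still be a proper subset. Second, the curve $t\mapsto f(t\bar v)$ for a fixed direction $\bar v$ has image contained in a one-parameter polynomial family, and the minimal coset containing that image can be a \emph{proper} subcoset of $gH$; there is no reason the nearest coset to such a curve should be all of $gH$. ``Leading behaviour along a generic direction'' is simply not generic enough when $d>1$.

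What the paper actually does is find a \emph{single} $\alpha\in X^\sharp$ with $A_\alpha=gH$, and the choice of $\alpha$ is the whole point: one takes $\alpha=f(\beta)$ where $\beta=(\beta_1,\dots,\beta_d)\in\mathfrak R^d$ satisfies $0\ll\beta_1\ll\beta_2\ll\cdots\ll\beta_d$, i.e.\ each $\beta_{i+1}$ dominates every real polynomial in $\beta_1,\dots,\beta_i$. This forces $\beta$ to have transcendence degree $d$ over $\RR$, and any non-constant real polynomial evaluated at $\beta$ is unbounded. One then proves (by induction on $\dim G$, passing to $G/N$ for a proper normal $N$) that whenever a coset $gH_0$ is near $q(\beta)$ for a polynomial map $q$, in fact $q(\beta)\in gH_0$. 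Since $gH_0$ is real algebraic, the preimage $\{x:f(x)\in gH_0\}$ is a real algebraic subset of $\RR^d$ containing a point of transcendence degree $d$, hence equals all of $\RR^d$; so $X\subseteq gH_0$, and minimality of $gH$ yields $A_\alpha=gH$. This transcendence-degree trick is the missing idea in your outline, and your curve-based approach cannot supply it because a single curve only gives transcendence degree one.
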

\begin{proof} Note first that for every $\alpha\in X^\sharp$, its nearest coset $A_\alpha$ is
contained in $gH$. Thus, by Corollary~\ref{maincor-cosets}, for every lattice
$\Gamma$,
$$\cl(\pi_\Gamma(X))=\bigcup_{\alpha\in X^\sharp}\pi_\Gamma(A_\alpha^\Gamma)\subseteq
\pi_\Gamma(gH^\Gamma).$$

 It is therefore sufficient to prove:

 \begin{lem}\label{inside-claim} Under the above assumptions, there exists  $\alpha\in X^\sharp$ such that
$A_\alpha=gH$.\end{lem}

\proof[Proof of Lemma.]
 We use induction on $\dim G$, with $\dim G=0$ being a
trivial case. Since left translation by $g^{-1}$ is a polynomial map from $G$ to
$G$, we may replace $X$ by $g^{-1}X$ and assume that the minimal coset containing
$X$ is $H$.

If $H$ is a proper subgroup of $G$ then by induction there exists $\alpha\in X^\sharp$
such that $A_\alpha=H$. Thus, we may assume that $H=G$, and we wish to find
$\alpha\in X^\sharp$ such that the nearest coset to $\alpha$ is $G$. We define $\alpha$
as follows:

We choose $\beta=(\beta_1,\ldots,  \beta_d)\in {\mathfrak R}^d$ with
$0 <<\beta_1<<\beta_2<< \cdots <<\beta_d$. By that we mean $\beta_1> \RR$,
and  for every $i=1,\ldots,
d-1$, and  every polynomial $q(x_1,\ldots, x_i)\in \RR[x_1,\ldots, x_i]$ we have
$\beta_{i+1}>q(\beta_1,\ldots, \beta_i)$.  We can find such a tuple $\beta$ because
$\mathfrak R$ is $|\RR|^+$-saturated. The following is easy to verify:
\begin{claim}\label{beta1} If $q(x_1,\ldots,x_d)\in \RR[x_1,\ldots, x_d]$ is
a non-constant polynomial then $q(\beta)\notin \CO(\mathfrak R)$.\end{claim}

We now claim that $\alpha=f(\beta)$ is the desired element. Towards that we prove
the following general claim:
\begin{claim}\label{polynomial} For $\beta\in {\mathfrak R}^d$ and $G$ as above,  if $q:\RR^d\to G$
is a polynomial map, and $gH_0$ is near $q(\beta)$, for some real algebraic $H_0\subseteq
G$ and $g\in G$, then $q(\beta)\in gH_0$.
\end{claim}

 Before proving the claim let us see that it implies Lemma~\ref{inside-claim}. Indeed, the
above claim implies that when $gH_0$ is any coset near $\alpha$ then $\alpha\in
gH_0$. We now consider the set $S=\{x\in \RR^d: q(x)\in gH_0\}$. Since $H_0$ is a
real algebraic group, the set $S$ is also real algebraic, defined over $\RR$.  The
transcendence degree of $\beta$ over $\RR$ is $d$, and since $\alpha \in H_0$ and
$\beta\in S^\sharp$,  we must have $S=\RR^d$. It follows that $X\subseteq gH_0$, and
therefore the nearest coset to $\alpha$ must contain $X$.  By our assumptions, it
follows that $A_\alpha=G$, thus ending the proof of Lemma~\ref{inside-claim}, and
with it the proof of Theorem~\ref{Leibman-Shah}.

Thus, we are left to prove Claim~\ref{polynomial}, and we do so by induction on the
$\dim G$.  We may assume that $gH_0$ equals $A_\alpha$, and by replacing the map $q$
with the polynomial map $g^{-1}q$, we may assume that the group $A_\alpha=H_0$. We
want to show that $\alpha\in H_0$. Without loss of generality, $H_0$ is a proper
subgroup of $G$, for otherwise we are done.

We may further assume that there is no proper algebraic subgroup $H_1\subseteq G$ such
that $q(\RR^d)\subseteq H_1$ (for otherwise $H_0$ is also contained in $H_1$ and we may
replace $G$ with $H_1$ and finish by induction). Let $N$ be a proper real algebraic
normal subgroup of $G$ containing $H_0$ and consider the map $\pi\circ q$, where
$\pi:G\to G/N$ is the quotient map.  By Lemma~\ref{homom2} (1), the map $\pi\circ q$
 is still
polynomial, and by our assumptions the trivial group $\{e\}$ is near $\pi\circ
q(\beta)$, and in particular $q(\beta)\in \CO(\mathfrak R)$. By Claim~\ref{beta1},
the map $\pi\circ q$ must be a constant map, which is necessarily  $e$. It follows
that  $q(\RR^d)\subseteq N$, contradicting our assumption.
 This
ends the proof
 of Claim~\ref{polynomial} and with it the proofs
of Lemma~\ref{inside-claim} and Theorem~\ref{Leibman-Shah}.
\end{proof}

\section{Neat families of cosets}
The work here is similar to the work in \cite[Section 7.1-7.2]{o-minflows}. We assume that $G$ is a real unipotent group.

Our first goal is to show that the family $\CA(X)$ of all nearest cosets to elements
in $X^\sharp$, is an $\Rom$-definable subfamily of the family of all cosets of real
algebraic subgroups of $G$ (see Fact~\ref{all-cosets}). This is very similar to the
work in \cite{mustab}. We expand the structure $\mathfrak R_{\mathrm{om}}$ by adding
a predicate symbol for the set of reals $\RR$. We are thus working in the structure
$\mathfrak R_{pair}=\la \mathfrak R_{\mathrm{om}},\Rom\ra$, in which $\Rom$ is an
elementary substructure of $\mathfrak R_{\mathrm{om}}$. Such structures are called
tame pairs of o-minimal structures and were studied in \cite{lou-limit}.

Note first that since the standard part map is definable in $\mathfrak R_{pair}$,
the family $\CA(X)$ is definable in $\mathfrak R_{pair}$. By \cite[Proposition
8.1]{lou-limit} we may conclude:

\begin{lem}\label{definability} The family of cosets $\CA(X)$ is definable in $\RR_{\mathrm{om}}$. Namely, there exists in $\Rom$ a definable set $T$ and a formula
$\phi(x,t)$, with $x$ and $t$ tuples of variables, such that
$$\CA(X)=\{\phi(G,t):t\in T\}.$$
\end{lem}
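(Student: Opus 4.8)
The plan is to rely on the two ingredients isolated in the paragraph preceding the statement: that the standard part map --- and hence $\mu(G)$, $\CO(G)$ and the relation ``$gH$ is near $\alpha$'' from Section~\ref{sec-nearest} --- is definable in the tame pair $\mathfrak R_{pair}=\la\mathfrak R_{\mathrm{om}},\Rom\ra$, and that in such a pair the small structure $\Rom$ carries no new induced structure, which is \cite[Proposition~8.1]{lou-limit}. The first step is to code $\CA(X)$ as a subset of a standard parameter space. By Fact~\ref{all-cosets} fix a semialgebraic $S\subseteq M_n(\RR)\times\RR^k$ such that the cosets of real algebraic subgroups of $\UT(n,\RR)$ contained in $G$ are exactly the sets $C_{\bar b}:=\{x\in G:(x,\bar b)\in S\}$ as $\bar b$ ranges over a semialgebraic $B\subseteq\RR^k$, each coset represented exactly once. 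By the definition of ``near'', every coset near a point $\alpha\in G^\sharp$ has the form $gH$ with $g\in G=G(\RR)$ and $H$ real algebraic (hence defined over $\RR$); in particular, by Theorem~\ref{minimum} the nearest coset $A_\alpha$ is of this form, so $A_\alpha=C_{\bar b}$ for a unique $\bar b\in B$. Thus
$$\CA(X)=\{C_{\bar b}:\bar b\in P\},\qquad P:=\{\bar b\in B:\exists\,\alpha\in X^\sharp\ A_\alpha=C_{\bar b}\}\subseteq\RR^k.$$

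Next I would write down a formula of $\mathfrak R_{pair}$ defining $P$. The relation ``$C_{\bar b}$ is near $\alpha$'' translates to $\exists\,\varepsilon\in\mu(G)\ \exists\,y\bigl((y,\bar b)\in S\wedge\alpha=\varepsilon y\bigr)$, which is a formula of $\mathfrak R_{pair}$ since $\mu(G)$ is definable there. By Theorem~\ref{minimum}(1) the family of cosets near $\alpha$ is closed under nonempty intersection, so ``$C_{\bar b}=A_\alpha$'' is equivalent to the conjunction of ``$C_{\bar b}$ is near $\alpha$'' with ``$\forall\,\bar b'\in B\ \bigl(C_{\bar b'}\text{ near }\alpha\rightarrow C_{\bar b}\subseteq C_{\bar b'}\bigr)$''. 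Since $X^\sharp=X(\mathfrak R)$ is $\mathfrak R_{\mathrm{om}}$-definable over the $\RR$-parameters of $X$, existentially quantifying $\alpha$ over $X^\sharp$ produces a formula of $\mathfrak R_{pair}$, with parameters from $\RR$, defining $P$. As $P\subseteq\RR^k$, \cite[Proposition~8.1]{lou-limit} applies and $P$ is definable in $\Rom$ over $\RR$. Taking $T:=P$ as an $\Rom$-definable set and letting $\phi(x,t)$ be the semialgebraic formula ``$x\in G\wedge(x,t)\in S$'', we get $\phi(G,t)=C_t$ for $t\in T$, hence $\CA(X)=\{\phi(G,t):t\in T\}$, as required.

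The step needing the most care is the coding in the first paragraph: one must be sure that the nearest coset --- built in Section~\ref{sec-nearest} as an intersection of cosets $gH$ with $g\in G(\RR)$ and $H$ real algebraic --- is itself always a coset of a real algebraic subgroup of $G$ with a representative in $G(\RR)$, so that $\CA(X)$ genuinely corresponds to a subset of the \emph{standard} space $\RR^k$ rather than of $\mathfrak R^k$. This is exactly the hypothesis under which \cite[Proposition~8.1]{lou-limit} pushes definability from the pair down to $\Rom$; it is supplied by Theorem~\ref{minimum} together with the very definition of ``near''. Beyond that, the argument is a mechanical rendering of the definitions of Section~\ref{sec-nearest} into formulas using the definability of $\st$ in $\mathfrak R_{pair}$, in complete analogy with \cite{mustab}, so I do not expect a serious obstacle.
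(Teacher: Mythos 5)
Your proposal is correct and follows the same route as the paper: code $\CA(X)$ by a standard parameter set via Fact~\ref{all-cosets}, observe that ``near'' and hence ``nearest coset'' are definable in the tame pair $\mathfrak R_{pair}$ because the standard part map (equivalently $\mu(G)$) is, and then push the resulting subset of $\RR^k$ down to $\Rom$ by \cite[Proposition~8.1]{lou-limit}. The paper's proof is just a two-sentence version of this; your elaboration of the coding and of the formulas is accurate.
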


Our next goal is to replace $\CA(X)$ by a family of cosets of finitely many
subgroups.

\begin{defn} Let $\CF=\{g_tH_t:t\in T\}$ be an $\Rom$-definable family of cosets of real algebraic subgroups of $G$. We
say that $\CF$ is \emph{neat} if the following hold:

\begin{enumerate}
\item For $t_1\neq t_2$, $g_{t_1}H_{t_1}\neq g_{t_2}H_{t_2}$.
\item There exists $k$, such that $T$ is a connected submanifold  of $\RR^k$.
 \item There exists a definable continuous function from $T$ to $G$, $t\mapsto h_t\in
 G$,  such that for every $t\in T$, $h_tH_t=g_tH_t$.
\item For every nonempty open $U \subseteq T$,
$$\la \bigcup_{t\in U}H_t\ra_{alg}=\la \bigcup_{t\in T} H_t\ra_{alg}.$$
\end{enumerate}

\medskip
For $\CF$ a neat family of of cosets as above, we denote by $H_\CF$ the group $\la
\bigcup_{t\in T}H_t\ra_{alg}$.
\end{defn}

\begin{lem}\label{neat1} Let $\CF$ be a neat family of algebraic
  subgroups
of $G$. Then for every  lattice $\Gamma\subseteq G$, the set $T_\Gamma=\{t\in
T:H_t^{\Gamma}=(H_\CF)^\Gamma\}$ is dense in $T$.
\end{lem}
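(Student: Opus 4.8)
The plan is to mimic the proof of Proposition~\ref{neat}, replacing the single group $P$ there with the moving family of subgroups $H_t$ in the neat family $\CF$. First I would fix a lattice $\Gamma\subseteq G$ and note that it suffices to prove that the complement $T\setminus T_\Gamma$ is nowhere dense in $T$; since $T$ is a connected submanifold of $\RR^k$ by condition (2) of neatness, and since $T\setminus T_\Gamma$ is definable in $\Rom$, it is enough to show that this complement contains no nonempty open subset of $T$. For each $t\in T$ we have $H_t\subseteq H_\CF$, hence $H_t^\Gamma\subseteq (H_\CF)^\Gamma$, so $t\notin T_\Gamma$ exactly when $H_t^\Gamma$ is contained in some proper $\Gamma$-rational subgroup $L\subsetneq(H_\CF)^\Gamma$, equivalently $H_t\subseteq L$. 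Thus
\[
  T\setminus T_\Gamma=\bigcup_{L}\{t\in T: H_t\subseteq L\},
\]
the union ranging over proper $\Gamma$-rational subgroups $L$ of $(H_\CF)^\Gamma$.

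Next I would invoke Remark~\ref{rem:uniform}, which gives that there are only countably many $\Gamma$-rational subgroups of $G$, so by the Baire Category Theorem applied to the complete metric space $T$ (or to any small open ball in $T$), it suffices to show that each set $T_L:=\{t\in T: H_t\subseteq L\}$ is nowhere dense in $T$. Each $T_L$ is definable in $\Rom$ (the family $\{H_t\}$ being $\Rom$-definable via condition (3) and Lemma~\ref{definability}-style reasoning, together with Fact~\ref{all-cosets}), so as above it is enough to show $T_L$ contains no nonempty open $U\subseteq T$. Suppose toward a contradiction that $U\subseteq T_L$ is nonempty and open in $T$. Then $H_t\subseteq L$ for all $t\in U$, so $\la\bigcup_{t\in U}H_t\ra_{alg}\subseteq L$. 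But condition (4) of neatness says $\la\bigcup_{t\in U}H_t\ra_{alg}=\la\bigcup_{t\in T}H_t\ra_{alg}=H_\CF$, whence $H_\CF\subseteq L$ and therefore $(H_\CF)^\Gamma\subseteq L$, contradicting the choice of $L$ as a proper $\Gamma$-rational subgroup of $(H_\CF)^\Gamma$.

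The only delicate point I anticipate is the justification that each $T_L$ (and $T\setminus T_\Gamma$) is $\Rom$-definable so that "nowhere dense" can be reduced to "has empty interior" — this relies on condition (1) ensuring the parametrization $t\mapsto H_t$ is injective and on $\Gamma$-rationality of $L$ being an $\Rom$-definable (indeed semialgebraic, by Fact~\ref{all-cosets}) condition once $L$ is fixed, together with the fact that definable subsets of an o-minimal structure with empty interior are nowhere dense. Everything else is a direct transcription of the Baire-category argument in Proposition~\ref{neat}; the real content has already been packaged into the neatness hypothesis (4), which plays exactly the role that Lemma~\ref{intersection} played there.
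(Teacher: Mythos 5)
Your proof is correct and follows essentially the same route as the paper: decompose $T\setminus T_\Gamma$ into the countably many definable sets $T(L)$ over proper $\Gamma$-rational $L\subseteq (H_\CF)^\Gamma$, apply Baire category, and use neatness condition (4) to rule out any $T(L)$ containing a nonempty open set. (Your passing claim that $T\setminus T_\Gamma$ itself is $\Rom$-definable is unjustified --- it is only a countable union of definable sets --- but this is harmless since your argument, like the paper's, only needs definability of each $T(L)$.)
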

\begin{proof}  For a $\Gamma$-rational subgroup $L$ of $G$, let
$$T(L)=\{t\in T: H_t\subseteq
L\}.$$

Clearly, if $t\in T\setminus T_\Gamma$ then $H_t^\Gamma$ is a proper subgroup of
$(H_\CF)^\Gamma$, hence $T\setminus T_\Gamma$ can be written as  a union of all sets
$T(L)$, as $L$ varies over all $\Gamma$-rational proper subgroups of
$(H_\CF)^\Gamma$.

 By Remark~\ref{rem:uniform}, there are countably many $\Gamma$-rational
subgroups of $G$, thus the union is countable. So, in order to show that $T_\Gamma$
is dense in $T$ it is sufficient, by Baire Category Theorem, to show that every
$T(L)$ is nowhere dense. Since this is a definable set it is sufficient to prove
that $T(L)$ does not contain any nonempty open subset of $T$. But, by definition of
$H_\CF$, for every $U\subseteq T$ nonempty open set, the group  $\la \bigcup_{t\in
U}H_t\ra_{alg}$  is the whole of $H_\CF$, so  $\la \bigcup_{t\in
U}H_t\ra_{alg}^\Gamma =(H_\CF)^\Gamma$. On the other hand, for every  $V\subseteq
T(L)$, we have $\la \bigcup_{t\in V}H_t\ra_{alg}^\Gamma\subseteq L\neq (H_\CF)^\Gamma$,
so no open nonempty subset of $T$ is contained in $T(L)$. Therefore, $T_\Gamma$ is
indeed dense in $T$.\end{proof}

\begin{lem}\label{partition} Let $\{g_tH_t:t\in T\}$ be a definable family of pairwise distinct cosets
 of algebraic subgroups of $G\subseteq \UT(n,\RR)$. Then \begin{enumerate} \item there is a definable
partition of $T=T_1\cup\cdots\cup T_r$, such that for each $i=1,\ldots, r$ the
family $\{g_tH_t:t\in T_i\}$ is neat. \item For each $i=1,\ldots, r$, let
$$L_i=\la\bigcup_{t\in T_i} H_t\ra_{alg}.$$
Then
 for every lattice $\Gamma\subseteq G$,
 $$\cl(\bigcup_{t\in T_i} g_tH_t^\Gamma)= \cl(\bigcup_{t\in T_i}g_tL_i^\Gamma).$$
 \end{enumerate}\end{lem}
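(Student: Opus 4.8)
\textbf{Proof plan for Lemma~\ref{partition}.}

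For part (1), the plan is to build the partition by a definable induction on the structure of the family, driven by condition (4) of neatness, which is the only genuinely restrictive clause once one has cell decomposition. First I would use cell decomposition in $\Rom$ to split $T$ into finitely many connected definable cells; this takes care of condition (2). On each cell, definable choice (via Fact~\ref{all-cosets}, which lets us represent each coset once, together with the fact that $T$ already indexes pairwise distinct cosets by hypothesis) gives a definable continuous section $t\mapsto h_t$ with $h_tH_t=g_tH_t$ after a further cell decomposition to ensure continuity --- this is condition (3), and condition (1) is inherited from the hypothesis that the cosets are pairwise distinct. The substantive point is condition (4): starting from a connected definable $T'\subseteq T$, consider the algebraic group $L'=\la\bigcup_{t\in T'}H_t\ra_{alg}$. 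If (4) already holds on $T'$ we are done; otherwise there is a nonempty open $U\subseteq T'$ with $\la\bigcup_{t\in U}H_t\ra_{alg}$ strictly smaller than $L'$. I would then observe, using the descending chain condition on real algebraic subgroups of $G$ (these correspond to a descending chain of Lie subalgebras, hence stabilize), that the function $t\mapsto \la\bigcup_{s\in V}H_s\ra_{alg}$ over small open neighborhoods $V$ of $t$ takes only finitely many values, each a real algebraic subgroup, and that the locus where this ``local hull'' equals a given subgroup $K$ is definable. Partitioning $T$ according to the local hull value, passing to connected components, and iterating (the dimension of the hull strictly drops each time we are forced to refine, so the process terminates) yields the desired finite definable partition into neat subfamilies.

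For part (2), fix $i$ and abbreviate $L=L_i$, $T'=T_i$; so $\{g_tH_t:t\in T'\}$ is neat with $H_{\CF}=L$. One inclusion is immediate: each $H_t\subseteq L$, so $H_t^\Gamma\subseteq L^\Gamma$, hence $g_tH_t^\Gamma\subseteq g_tL^\Gamma$, giving $\cl(\bigcup_{t\in T'}g_tH_t^\Gamma)\subseteq\cl(\bigcup_{t\in T'}g_tL^\Gamma)$. For the reverse inclusion, the key input is Lemma~\ref{neat1}: the set $T'_\Gamma=\{t\in T':H_t^\Gamma=L^\Gamma\}$ is dense in $T'$. For $t\in T'_\Gamma$ we have $g_tH_t^\Gamma=g_tL^\Gamma$, so $\bigcup_{t\in T'_\Gamma}g_tL^\Gamma\subseteq\bigcup_{t\in T'}g_tH_t^\Gamma$. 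Now I would use the continuous section $h_t$ from neatness (condition (3)) together with Theorem~\ref{ratner} to control $g_tL^\Gamma=h_tL^\Gamma$: the map $t\mapsto h_t$ is continuous, $L^\Gamma$ is a fixed closed subgroup, and the multiplication map $G\times G\to G$ is continuous, so $t\mapsto \cl(\pi_\Gamma(h_tL^\Gamma))=\pi_\Gamma(h_tL^\Gamma)$ varies continuously in an appropriate sense; density of $T'_\Gamma$ in $T'$ then forces $\bigcup_{t\in T'}h_tL^\Gamma\subseteq\cl(\bigcup_{t\in T'_\Gamma}h_tL^\Gamma)$. Combining, $\cl(\bigcup_{t\in T'}g_tL^\Gamma)\subseteq\cl(\bigcup_{t\in T'}g_tH_t^\Gamma)$, which gives the reverse inclusion and hence equality.

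The main obstacle I expect is the termination and definability bookkeeping in part (1): one must check that refining according to the ``local algebraic hull'' is a definable operation in $\Rom$ (which uses that the family of real algebraic subgroups is a semialgebraic family, Fact~\ref{all-cosets}, so the condition $H_t\subseteq K$ is definable uniformly in $t$ and $K$), and that iterating it terminates --- this follows because each forced refinement strictly decreases $\dim\la\bigcup_t H_t\ra_{alg}$ on the relevant piece, a strictly decreasing sequence of nonnegative integers. A secondary subtlety in part (2) is making precise the ``continuity'' argument that passes closure from the dense set $T'_\Gamma$ to all of $T'$: concretely, for any $t\in T'$ choose a definable curve in $T'_\Gamma$ converging to $t$ (possible since $T'_\Gamma$ is dense and $T'$ is a connected manifold), apply continuity of $s\mapsto h_s$ and of group multiplication to see that any point of $h_tL^\Gamma$ is a limit of points of $\bigcup_{s\in T'_\Gamma}h_sL^\Gamma$, using that $L^\Gamma$ is closed in $G$ (Theorem~\ref{ratner}, since $L^\Gamma$ is $\Gamma$-rational). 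Everything else is routine application of the earlier results.
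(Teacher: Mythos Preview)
Your approach matches the paper's almost exactly: define the local hull $G_t=\la\bigcup_{s\in U}H_s\ra_{alg}$ for small $U\ni t$ (well-defined by DCC), partition $T$ accordingly, and for part~(2) invoke Lemma~\ref{neat1} and continuity of $t\mapsto g_t$. Two points need correction.

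\medskip

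\textbf{Termination in part~(1).} Your claimed invariant---that the dimension of $\la\bigcup_t H_t\ra_{alg}$ strictly drops on every piece requiring further refinement---is false. For a simple counterexample take $T=\RR$, $H_t=H_1$ for $t>0$, $H_t=H_2$ for $t<0$, and $H_0$ a group generating $\la H_1,H_2\ra_{alg}=L$. The local hull is $H_1$ on $(0,\infty)$, $H_2$ on $(-\infty,0)$, and $L$ at $0$; the piece $\{0\}$ has global hull $\la H_0\ra_{alg}$, which can equal $L$. What \emph{does} drop is $\dim T$: after cell-decomposing each piece $P_i$ of the local-hull partition, every top-dimensional cell is open in $T$, hence has the same local hull within itself as within $T$, hence is neat; the remaining cells have strictly smaller dimension and you recurse on those. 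The paper runs exactly this induction on $\dim T$ (phrased slightly differently: it partitions by $\dim G_t$ rather than by $G_t$ itself, then uses a clopen argument on connected full-dimensional pieces to show $G_t$ is constant there).

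\medskip

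\textbf{The curve in part~(2).} You cannot choose a \emph{definable} curve in $T'_\Gamma$ converging to $t$, because $T'_\Gamma$ is not definable (it depends on the lattice $\Gamma$ and is carved out by countably many algebraic conditions). All you need, and all the paper uses, is an ordinary sequence $t_n\to t_0$ with $t_n\in T'_\Gamma$, available by density; then $g_{t_n}h_0\in g_{t_n}H_{t_n}^\Gamma$ and $g_{t_n}h_0\to g_{t_0}h_0$ by continuity of $t\mapsto g_t$. Your invocation of Theorem~\ref{ratner} for the closedness of $L^\Gamma$ is also unnecessary: $L^\Gamma$ is a real algebraic subgroup of $G$, hence closed.
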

\begin{proof} (1) We use induction on $\dim T$.
 By o-minimality, we may assume that $T$
is a connected submanifold of some $\RR^k$ and that the function $t\mapsto g_t$ is
continuous on $T$. Given $t\in T$, it follows from DCC for real algebraic subgroups
that there exists a subgroup $G_t\subseteq G$ such that for all sufficiently small open
$t\in U\subseteq T$, $\la \bigcup_{t\in U}H_t\ra_{alg}=G_t$.

Because the family of all real algebraic subgroups of $G$ is definable the family
$\{G_t:t\in T\}$ is  also definable, thus we may divide $T$ into finitely many
definable submanifolds, $T_1,\ldots, T_m$, on each of which $\dim G_t$ is constant.
By induction, it is sufficient to handle those $T_i$ whose dimension equals that of
$T$. Notice that for such a $T_i$, and $t\in T_i$, it is still the case that for all
sufficiently small open $U\subseteq T_i$, a neighborhood of $t$, we have
$$G_t=\la \bigcup_{t\in U}H_t\ra_{alg}$$ (this might not be the case for those
$T_i$'s with $\dim T_i<\dim T$).

 Thus, without loss of generality, $\dim
G_t$ is constant as $t$ varies in $T$.
 We claim that now the group
$G_t$ is the same for all $t\in T$ (and hence $\{g_tH_t:t\in T\}$ is a neat family).
Indeed, fix $t_0\in T$ and let
$$T_0=\{t\in T;G_t=G_{t_0}\}.$$

The set $T_0$ is closed in $T$: Let $t_1\in \cl(T_0)$ and fix $U\ni t_1$ such that
$G_{t_1}=\la \bigcup_{t\in U}H_t\ra_{alg}$. For every $t\in U\cap T_0$, we have
$G_{t}=G_{t_0}\subseteq G_{t_1}$, but since $\dim G_t$ is constant in $T$ we must have
$G_{t_1}=G_{t_0}$, so $t_1\in T_0$.

Let us see that $T_0$ is also open in $T$. For $t_2\in T_0$ let $t_2\in U\subseteq T$ be
an open set such that $G_{t_2}=G_{t_0}=\la \bigcup_{t\in U}H_t\ra_{alg}$. By
dimension considerations, for all $t\in U$, $G_t=G_{t_0}$, so $U\subseteq T_0$, and thus
$T_0$ is open.

 Because $T$ is connected, $T_0=T$. It follows that for every open
nonempty sets $U\subseteq T$
$$\la \bigcup_{t\in U} H_t\ra_{alg}=\la \bigcup_{t\in T} H_t\ra_{alg}.$$

(2) Fix $i=1,\ldots, r$ so  the family $\{g_tH_t:t\in T_i\}$ is neat. First note that for $t\in T_i$, each
$g_tH_t^\Gamma$ is contained in $g_tL_i^\Gamma$, so it is sufficient to show that
$\bigcup_{t\in T}g_tH_t^\Gamma$ is dense in $\bigcup_{t\in T}g_tL_i^\Gamma$.

 By Lemma~\ref{neat1}, the set $T_0=\{t\in T:H_t^\Gamma=L_i^\Gamma\}$ is dense in $T_i$. Let
$g_{t_0}h_0$ be an arbitrary element of $g_{t_0}L_i^\Gamma$, for some $t_0\in T_i$,
and choose $t_n\in T_0$ a sequence converging to $t_0$. For each $t_n$ we have
$g_{t_n}h_0\in g_{t_n}L_i^\Gamma=g_{t_n}H_{t_n}^\Gamma$. Because the map $t\mapsto
g_t$ is continuous, $g_{t_n}h_0$ tends to $g_{t_0}h_0$, so indeed the union of
$g_tH_t^\Gamma$ is dense in the union of $g_tL_i^\Gamma$.\end{proof}

\section{The main theorem}
We are now ready to prove Theorem~\ref{thm-main2.5}. We find it convenient to
reformulate the result within $G$ and not in $G/\Gamma$. The equivalence of the
theorem below to Theorem~\ref{thm-main2.5} follows from the definition of the
quotient topology on $G/\Gamma$. Namely, for  every $X\subseteq G$,  $\pi_\Gamma(X)$ is
closed in $G/\Gamma$ if and only if $X\Gamma$ is closed in $G$.

All definability below is taken in the o-minimal structure $\Rom$.

\begin{thm}\label{thm-main3} Let $G$ be a real unipotent group
and let $X\subseteq G$ be a definable set. Then there are finitely many definable real
algebraic subgroups $L_1,\ldots, L_m\subseteq G$ of positive dimension, and finitely many
definable closed sets $C_1,\ldots, C_m\subseteq G$, such that for every lattice
$\Gamma\subseteq G$,
$$\cl(X\Gamma)=(\cl(X) \cup \bigcup_{i=1}^m C_i L_i^\Gamma) \Gamma.$$ In addition,
the $C_i$'s can be chosen to satisfy:
\begin{enumerate} \item For every $i=1,\ldots, m$,
$\dim(C_i)<\dim X$.

\item Let $L_i$ be a  maximal subgroup with respect to inclusion, among $L_1,\ldots,
L_m$. Then  $C_i$ is a bounded set in $G$, and in particular $C_iL_i^\Gamma \Gamma$
is closed in $G$.
\end{enumerate}
\end{thm}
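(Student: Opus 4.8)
The plan is to start from Corollary~\ref{maincor-cosets}, which tells us that for every lattice $\Gamma$, $\cl(X\Gamma)=\bigcup_{\alpha\in X^\sharp}A_\alpha^\Gamma\Gamma$, and then push this through the definability and partition machinery of Section 6. First I would separate the bounded types from the unbounded ones. For $\alpha\in X^\sharp\cap\CO(G)$, Lemma~\ref{bounded} gives $A_\alpha=\{\st(\alpha)\}$, and $\{\st(\alpha):\alpha\in X^\sharp\cap\CO(G)\}=\cl(X)$ by Fact~\ref{standard}; these contribute exactly the $\cl(X)\Gamma$ term. So the work is in handling $\CA'(X):=\{A_\alpha:\alpha\in X^\sharp,\ \dim A_\alpha>0\}$, which by Lemma~\ref{definability} is an $\Rom$-definable family of cosets, say $\{g_tH_t:t\in T\}$ with all $H_t$ of positive dimension. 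Applying Lemma~\ref{partition}(1), partition $T=T_1\cup\cdots\cup T_m$ so that each subfamily is neat, set $L_i=\la\bigcup_{t\in T_i}H_t\ra_{alg}$ (positive-dimensional since the $H_t$ are), and let $C_i$ be the image of $T_i$ under the continuous section $t\mapsto h_t$ of Lemma~\ref{neat1}(3), so $C_iL_i^\Gamma=\bigcup_{t\in T_i}h_tL_i^\Gamma=\bigcup_{t\in T_i}g_tL_i^\Gamma$ as sets of cosets. Then Lemma~\ref{partition}(2) gives $\cl(\bigcup_{t\in T_i}g_tH_t^\Gamma)=\cl(\bigcup_{t\in T_i}g_tL_i^\Gamma)$, and combining over $i$ with the bounded part yields $\cl(X\Gamma)=(\cl(X)\cup\bigcup_i C_iL_i^\Gamma)\Gamma$. (One should take the topological closure of each $C_i$ at the end; this does not change $C_iL_i^\Gamma\Gamma$ since that set is already the relevant closure.)

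Next I would establish the dimension bound (1). The point is that $A_\alpha$ is not an arbitrary coset near $\alpha$ but the \emph{minimal} one, so $A_\alpha$ varies genuinely as $\alpha$ ranges over the positive-dimensional locus, and the map $\alpha\mapsto A_\alpha$ (hence $\alpha\mapsto t$, hence $\alpha\mapsto h_t$) factors through a definable map whose image $C_i$ has dimension at most $\dim X^\sharp=\dim X$. To get strict inequality, observe that whenever $\dim A_\alpha=\dim H_t\geq 1$, the fiber of $\alpha\mapsto A_\alpha$ over $A_\alpha$ contains a positive-dimensional piece inside $X^\sharp$: indeed one can perturb $\alpha$ within $\mu(G)g_tH_t^\sharp\cap X^\sharp$. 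More carefully, I would argue as in \cite{o-minflows}: since $C_i$ is (up to the section) the parameter set $T_i$ and the $C_i$'s are meant to be ``transversal'' to the $L_i$-direction, a dimension count on the definable set $\{(\alpha,t):\alpha\in X^\sharp,\ A_\alpha=g_tH_t,\ t\in T_i\}$, together with the fact that each $g_tH_t$ meeting $X^\sharp$ in a positive-dimensional set forces $\dim X>\dim C_i$, gives the claim; alternatively one replaces $C_i$ by a lower-dimensional definable set still satisfying $C_iL_i=g_{T_i}H_{T_i}\cdot(\text{correction in }L_i)$, which is legitimate because only the coset matters.

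Finally, for (2): suppose $L_i$ is maximal among $L_1,\dots,L_m$ under inclusion. I would argue that the curves/types whose nearest coset has the ``largest'' associated subgroup are exactly those going to infinity in the $L_i$-directions, so the transversal parameter stays bounded; concretely, any unbounded behavior of $C_i$ would have to be absorbed into a coset of a strictly larger algebraic subgroup, contradicting maximality of $L_i$ (this is where I use that $A_\alpha$ absorbs all of the ``infinite'' part — $\mu(G)A_\alpha^\sharp$ contains $\alpha$ and $A_\alpha$ is minimal with this property). Hence $C_i$ may be taken bounded, and then $\cl(C_i)$ is compact; since $\pi_\Gamma(L_i^\Gamma)$ is closed (it is a sub-nilmanifold, $L_i^\Gamma$ being $\Gamma$-rational) and $\cl(C_i)$ is compact, the product $\pi_\Gamma(\cl(C_i)L_i^\Gamma)$ is closed in $M_\Gamma$, equivalently $C_iL_i^\Gamma\Gamma=\cl(C_i)L_i^\Gamma\Gamma$ is closed in $G$, using Fact~\ref{rational-facts}(1),(3) and the product-of-compact-and-closed argument from the proof of Fact~\ref{rational-facts}(3).

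I expect the main obstacle to be the strict dimension inequality in (1): making precise the sense in which the nearest-coset map loses dimension — i.e. that minimality of $A_\alpha$ forces the fibers of $\alpha\mapsto A_\alpha$ restricted to the positive-dimensional part of $X^\sharp$ to be positive-dimensional, or equivalently that one can always choose the transversal $C_i$ strictly below $\dim X$ — together with the bounded-representative claim in (2), which similarly requires controlling how the infinite directions of $X$ are distributed among the $L_i^\Gamma$. Both are the genuinely geometric inputs; the rest is assembling Corollary~\ref{maincor-cosets}, Lemma~\ref{definability} and Lemma~\ref{partition}.
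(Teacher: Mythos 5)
Your derivation of the main identity is essentially the paper's: Corollary~\ref{maincor-cosets}, Lemma~\ref{definability}, Lemma~\ref{partition}, then a choice of representatives and a final closure; that part is fine. The genuine gaps are exactly where you predicted them, in (1) and (2), and the mechanisms you sketch there would not work as stated.

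For (1), your proposed argument is a fiber-dimension count for the assignment $\alpha\mapsto A_\alpha$, claiming its fibers over positive-dimensional cosets are positive-dimensional inside $X^\sharp$. But $\alpha\mapsto A_\alpha$ is not a definable map between definable sets: its domain consists of nonstandard points of $X^\sharp$ while the parameters $t$ are standard, so there is no definable set $\{(\alpha,t):A_\alpha=g_tH_t\}$ over $\RR$ to which the o-minimal fiber-dimension formula applies, and the ``fiber'' $X^\sharp\cap\mu(G)\,g_tH_t^\sharp$ is only type-definable. The paper's actual device is different and is the real content of (1): choose, via Lemma~\ref{complement-to-group}, a closed semialgebraic transversal $A_i$ to $L_i$, normalize the representatives $g_t$ to lie in $A_i$, and map $X$ definably into $A_i\times\RR$ by $x\mapsto(a_i(x),1/|h_i(x)|)$. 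One then checks that every $g\in C_i$ yields a point $(g,0)$ in the \emph{frontier} of the image (the corresponding $\alpha$ is unbounded, so $|h_i(\alpha)|$ is infinite while $a_i(\alpha)$ has standard part $g$), and the strict inequality $\dim C_i<\dim X$ comes from the o-minimal frontier inequality $\dim \mathrm{Fr}(X_i)<\dim X_i\le\dim X$. For (2), ``unbounded behavior would have to be absorbed into a larger subgroup'' is the right slogan, but the proof needs two concrete steps you omit: first, a compactness (in the logic sense) argument producing a single definable closed bounded $B$ with $X\subseteq\bigcup_j BL_j$ over the maximal $L_j$'s; second, a contradiction with \emph{neatness}: if $C_1$ were unbounded, one finds a nonempty open set $U$ of parameters with $H_t\subseteq L_1\cap L_{i_0}$ for all $t\in U$ and some $L_{i_0}\neq L_1$, so $\la\bigcup_{t\in U}H_t\ra_{alg}$ is a proper subgroup of $L_1$, contradicting clause (4) of neatness. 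Without these, (2) remains an assertion; note also that the normalization of representatives into the transversal $A_1$ is used again here, to pass from $g_t\notin B_1$ to $g_tL_1\nsubseteq B_1L_1$.
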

\begin{proof} Recall that for a coset $A=gH\subseteq G$, and a lattice $\Gamma$, we write
$A^\Gamma$ for $gH^\Gamma$.  In particular, $\cl(A\Gamma)=A^\Gamma \Gamma$.

 By Corollary~\ref{maincor-cosets},
$$\cl(X\Gamma)=\st(X^\sharp\Gamma^\sharp)=
\bigcup_{A\in \mathcal A(X)}A^\Gamma \Gamma.$$

By Lemma~\ref{definability}, the family of cosets $\CA(X)$ is definable in $\Rom$.
By Definable Choice, we may assume that the cosets in $\CA(X)$ are pairwise
distinct. As we already pointed out, the zero-dimensional cosets in this family are
exactly the singletons of elements of $X$. Thus we restrict our attention to those
cosets which have positive dimension and denote this definable sub-family by
$\CA(X)'$.

By Lemma~\ref{partition}, we can divide $\CA(X)'$ into finitely many neat families
of cosets, $\CA_1\cup\cdots\cup \CA_m$. For each $i=1,\ldots, m$, the family
$\CA_i=\{g_tH_t:t\in T_i\}$ has an associated fixed group $L_i=\la \bigcup_{t\in
T_i} H_t\ra_{alg}$.  By Lemma~\ref{partition}, for every lattice
$\Gamma \subseteq G$ and for each $i=1,\ldots, m$, we have
$$\bigcup_{A\in \CA_i} \cl(A\Gamma)=\bigcup_{t\in
T_i}\cl(g_tH_t^\Gamma)\Gamma=\bigcup_{t\in T_i}g_tL_i^\Gamma \Gamma.$$

For each  $i=1,\ldots, m$ we consider the group $L_i$. By Lemma~\ref{complement-to-group}, for each $i=1,\ldots, m$, there exists a closed
semi-algebraic ``complement'' $A_i\subseteq G$,
 to the group $L_i$.
Namely, the map $(a,h)\to ah$ is a diffeomorphism of $A_i\times L_i$ and $G$. We let
$(a_i, h_i):G\to A_i\times L_i$ be its inverse map, so for every $g\in G$ we have
$g=a_i(g)h_i(g)$. Notice that the map $a_i$ is constant on left cosets of $L_i$.

Since the map $a_i\colon G\to A_i$  is continuous, we may replace the map $t\mapsto
g_t$ on $T_i$ by the continuous map $t\mapsto a_i(g_t)$ and thus assume, for each
$I=1,\ldots, m$, that $g_t$ takes value in $A_i$. By our choice of $\CA(X)'$, it is
also injective. We let $C_i=\cl(\{g_t:t\in T_i\})$ (there is no harm in taking
closure since we are describing closed set $\cl(X\Gamma)$). So, $C_i\subseteq A_i$.

 Thus,
$$\cl(X\Gamma)=\cl(X)\Gamma \cup \bigcup_{i=1}^m \bigcup_{A\in \CA_i}\cl(A\Gamma)=
(X\cup  \bigcup_{i=1}^mC_iL_i^\Gamma)\Gamma.$$ This ends the proof of the main
result.

Let us see that our sets $C_i$ satisfy (1) and (2). It is sufficient to prove both
for $C_i'=\{g_t:t\in T_i\}$ instead of $C_i=\cl(C_i')$. Indeed, by o-minimality
$\dim C_i'=\dim C_i$ and clearly $C_i$ is bounded if and only if $C_i'$ is.
\medskip

 (1)We need  to show that $\dim C_i'<\dim X$.
 By our choice of $T_i$ and $C_i'$, for each $g\in C_i'$ there exists $\alpha\in G^\sharp
\, \setminus \, \CO(G)$ such that $A_\alpha\subseteq gL_i$.  In particular, the coset
$gL_i$ is near $\alpha$.

 Recall that $G$ is a closed subset of  $\RR^{n^2}$ and
$\CO(G)$ is the collection of all elements of $G$ which are $\RR$-bounded. Given $g\in
G$ we let $|g|$ be its Euclidean norm as an element of $\RR^m$. As we noted in
Section~\ref{standard-part}, for $\alpha\in G^\sharp$, $\alpha\in \CO(G)$ if and
only if $|\alpha|\in \CO(\mathfrak R)$.

 We define
$$X_i=\{(a_i(x),1/|h_i(x)|)\in A_i\times \RR: x\in X\}.$$

The set $X_i$ is definable and there is clearly a definable surjection from $X$ onto
$X_i$, thus $\dim X\geq \dim X_i$.

\medskip

\noindent{\bf Claim} If $g\in C_i'$ then $(g,0)$ is in $Fr(X_i)=\cl(X_i)\setminus
X_i$.

\begin{proof}[Proof of Claim.]
Clearly, $(g,0)\notin X_i$, so we need to see that it belongs to $\cl(X_i)$.

First note that since the map $(a_i,h):G\to A_i\times L_i$ is a semialgebraic
homeomorphism over $\RR$, it sends $\CO(G) $ onto $(\CO(G)\cap A_i^\sharp)\times
(\CO(G)\cap L_i^\sharp)$. Next,
 as we noted above, there exists $\alpha\in X^\sharp\setminus \CO(G)$ such that the coset $gL_i$ is near
$\alpha$.

 So, there exists $\epsilon\in \mu(G)$ such that $\alpha\in \epsilon
gL_i^\sharp$. Since $\alpha$ and $\epsilon g$ are in the same left coset of
$L_i^\sharp$, we have $a_i(\epsilon g)=a_i(\alpha)$. Because $a_i(-)$ is a
continuous map, and $a_i$ is the identity on $A_i$, we have
$$\st(a_i(\epsilon g))=a_i(g)=g,$$ and in particular, $a_i(\alpha)\in \CO(G)$ and $\st(a_i(\alpha))=g$.

We have $\alpha=a_i(\alpha)h_i(\alpha)$, and since $\alpha\notin \CO(G)$ and
$a_i(\alpha)\in \CO(G)$, then $h_i(\alpha)\notin \CO(G)$, so $|h_i(\alpha)|\notin
\CO(\mathfrak R)$, hence $\st(1/|h_i(\alpha)|)=0$. Thus,
$(g,0)=(\st(a_i(\alpha)),\st(1/ |h_i(\alpha)|))$ is in $\st(X_i^\sharp)$, which by
Fact~\ref{standard}, equals $\cl(X_i)$.\end{proof}

\medskip
By o-minimality,  $\dim Fr(X_i)<\dim X_i\leq \dim X$, so it follows from our Claim
that $\dim C_i'<\dim X$.

\medskip

(2) We may assume that the groups $L_1,\ldots, L_r$ are maximal with respect to
inclusion among $L_1,\ldots,L_m$ (note that we allow repetitions among the $L_i$'s).
We first prove:
\begin{claim}\label{claim:Bexits}
  There is a definable closed bounded set  $B\subseteq G$ such that
  \[ X \subset  BL_1\cup \dotsb \cup BL_r. \]
\end{claim}
\begin{proof}[Proof of Claim.]
  Our construction implies that for every $\alpha \in
X^\sharp\,\setminus\,\CO(G)$, if $A_\alpha=g_\alpha H_\alpha$ then there exists
$i\in\{1,\ldots, m\}$  and  $g\in C_i'$ with  $A_\alpha\subseteq gL_i$,  hence $\alpha\in
\CO(G)L_i^\sharp$. Each $L_i$ is contained in some $L_j$, with $1\leq j\leq r$, and
hence
\[ X^\sharp \subseteq \CO(G) \cup \bigcup_{i=1}^r \CO(G)L_i^\sharp. \]
Writing $\CO(G)$ as a countable union of definable closed bounded sets
and using  the
Compactness Theorem (in Logic) we obtain that there is a definable
closed bounded set
$B\subseteq G$ with
\[ X \subseteq B \cup \bigcup_{i=1}^r B L_i. \]

If $X$ is bounded then $r=m=0$ and then $X\subseteq B$ for some $B$. Otherwise, $B\subseteq
BL_i$ for every $i$, and hence
\[ X \subseteq \bigcup_{i=1}^r B L_i. \]
 This proves Claim~\ref{claim:Bexits}.
\end{proof}

We fix a set $B$ as in Claim~\ref{claim:Bexits}.
\begin{claim}\label{claim:Bexits1}
  For every $\alpha\in X^\sharp$ there is $b\in B$ and $i\in\{1,\dots,
  r\}$ such that $A_\alpha \subseteq b L_i$, and in particular, $H_\alpha\subset
L_i$
\end{claim}
\begin{proof}[Proof of Claim]
Let $\alpha\in X^\sharp$. It follows from Claim~\ref{claim:Bexits} that there is
$b\in B$ and $i\in \{1,\dotsc, r\}$ such that $\alpha$ is
  near the coset $bL_i$. (If $\alpha\in B^\sharp$, then $\alpha$ is near
  the coset $bL_1$, where $b=\st(\alpha)\in B$).
This proves Claim~\ref{claim:Bexits1}.
\end{proof}

We now proceed with the proof of $(2)$ and fix a maximal $L_i$.  Without loss of
generality, $i=1$.

We need to show that $C_1'$ is bounded. So assume towards getting a contradiction
that $C_1'$ is unbounded.

It is not hard to see that there is a bounded closed definable set $B_1 \subseteq
A_1$ (recall $A_1$ is the complement of $L_1$) such that $B\subseteq B_1L_1$,
hence $BL_1 \subseteq B_1L_1$. Because $C_1'$ is unbounded subset of $A_1$, we have
$C_1'\not \subseteq B_1$.

Thus, by our choice of $C_1'$ and $L_1$, there is a neat family $\CF=\{ g_t H_t
\colon t\in T_1\}$ (with $g_t$ taking values in $A_1$), such that: (i)
$H_{\CF}=L_1$, (ii) for every $t\in T_1$ there is $\alpha\in X^\sharp$ with
$A_\alpha=g_tH_t$ and (iii) for some $t_0\in T_1$, $g_{t_0} \notin B_1$.

By the continuity of $g_t$, there exists an open $U\subseteq T_1$ containing $t_0$ such
that for all $t\in U$, $g_t\notin B_1$. It follows that for all $t\in U$,
$g_tL_1\nsubseteq B_1L_1$ (here we use the fact that $A_1$ contains a single
representative for each left coset of $L_1$), and since $BL_1\subseteq B_1L_1$, we also
have $g_tL_1\nsubseteq BL_1$.

 By Claim~\ref{claim:Bexits1}, the
set $U$ is covered by definable sets
 $S_i$, $i=1,\dotsc,m$, where $S_i=\{ t\in U \colon g_tH_t \subseteq
 BL_i\}$. However, by what we just showed, $U\cap S_1=\emptyset$, so we have
$$U\subseteq \bigcup_{L_i\neq L_1} S_i.$$

It follows from o-minimality that there exists $i_0$, with $L_{i_0}\neq L_1$, such
that $S_{i_0}$ contains
 nonempty open set $U_{i_0}\subseteq U$. Thus, $U_{i_0}\subseteq T_1\cap S_{i_0}$, so for
every $t\in U_{i_0}$,   $H_t$ is
 contained in  $L_1\cap L_{i_0}$. By the maximality of $L_1$, and since $L_1\neq
L_{i_0}$, the group $L_1\cap L_{i_0}$ is a proper subgroup of $L_1$.
  Hence $$\la \bigcup_{t\in U_{i_0}} H_t \ra_{alg}$$
 is a proper subgroup of $L_1$, contradicting  the neatness of the family
 $\CF$. Thus $C_1'$ and therefore $C_1$ is bounded.

 This ends the proof of the clause (2) and
 Theorem~\ref{thm-main3}. \end{proof}

\section{On uniform distribution}
\label{sec:unif-distyr}

In this section we consider the connection between the topological results obtained thus far,   and uniform distributions on nilmanifolds in the case of definable curves.

We work in the structure $\Rom$. By definable we mean
$\Rom$-definable.

We fix a unipotent group $G$, a lattice $\Gamma$ in $G$,
and let  $X=G/\Gamma$ and $\pi\colon G\to X$ be the quotient map.

We denote by $\mu_G(x)$  the unique
Borel regular $G$-invariant probability measure on $X$.

We let   $C^0(X)$ denote the space of all continuous real
valued functions on  $X$, and let $\CP(X)$
be the space of Borel
regular probability measures $X$. We equip $\CP(X)$ we the weak$^*$
topology. With this topology the space $\CP(X)$ is compact,
sequentially compact (i.e. every sequence has a convergent subsequence), and
a sequence $\mu_n$  is convergent to $\mu$ if for any $f\in C^0(X)$ we
have
\[\lim_{n\to \infty} \int_X f(x) d\mu_n(x) =\int_X f(x) d\mu(x). \]

\medskip

By {\em a definable curve on $G$} we mean a continuous definable map
$\gamma\colon \RR^{\geq 0} \to G$. If $\gamma\colon \RR^{\geq 0} \to
G$ is a definable curve then by $[\gamma]$ we denote the image of
$\gamma$, i.e. the set $\gamma(\RR^{\geq 0})$.

For a definable curve $\gamma$  on $G$,  $R\in \RR^{>0}$ and
$f(x)\in C^0(X)$ let
\[ L^\gamma_R(f) = \frac{1}{R}\int_0^R  f{\co} \pi(\gamma(t)) dt.  \]
It is easy to see that $L^\gamma_R$ is a linear functional on
$C_0(X)$ with $L^\gamma_R(\mathbf 1_X)=1$, hence by  Riesz–Markov–Kakutani representation theorem,
there is $\mu^\gamma_R \in \CP(X)$ with
$L^\gamma_R(X)(f)=\int_X f(x) d\mu^\gamma_R(x)$ for all $f\in C^0(X)$.

\medskip

\begin{defn} Let $\gamma$ be a definable curve on $G$.
  \begin{enumerate}
  \item  We say that $\gamma$ is \emph{unbounded} if $\lim_{t\to
      \infty}\gamma(t)$ does not exists in $G$,  equivalently, since $G$ is a
    closed subset of $M_n(\mathbb R)$,  $\lim_{t\to \infty} |\gamma(t)|=+\infty$,
    where $|\gamma(t)|=\|
      \gamma(t)\|_\mathrm{max}$ is the maximal entry, in the absolute value,
    of the matrix $\gamma(t)$.
\item
  We say  that $\gamma$ is \emph{dense} in $G$ mod
$\Gamma$  if the set $[\gamma]\cdot \Gamma$ is dense in $G$,
equivalently $\pi([\gamma])$ is dense in $X$.

\item
We say that $\gamma(t)$ is \emph{continuously
  uniformly distributed (c.u.d. for short)} in $G$ mod $\Gamma$  if
\[ \lim_{R\to \infty} \mu^\gamma_R = \mu_G. \]
  \end{enumerate}
\end{defn}

Finally, recall that an o-minimal structure $\CR$ on $\mathbb R$ is
called {\em polynomially bounded} if every definable function of one
variable which is defined on some positive ray is eventually bounded
by a polynomial over $\mathbb R$. The field structure as well as the
structure $\Ran$ are polynomially bounded (see \cite{polbound})
while $\mathbb R_\mathrm{exp}$ is obviously not polynomially bounded.

Our goal is to prove the following theorem.

\begin{thm}\label{thm:main} Assume the structure $\Rom$ is polynomially
  bounded.  Let $\gamma(t)$ be a definable curve on $G$. If $\gamma$
  is dense in $G$ mod $\Gamma$ then  it is c.u.d. in $G$ mod $\Gamma$.
\end{thm}
\begin{rem} Notice that the converse always holds: if $\gamma$ is
  c.u.d. mod $\Gamma$ then it is dense mod $\Gamma$.

\end{rem}

From now on we assume that $\Rom$ is polynomially bounded.

We prove Theorem~\ref{thm:main} by induction on $\dim(G)$. The
statement is trivial if $\dim(G)=0$. So we assume from now on that $\dim(G)>0$ and
theorem is true for all unipotent groups of dimension smaller than
$G$.

We fix a definable curve   $\gamma(t)$  and assume that it is dense in
$G$ mod $\Gamma$.

Since the space $\CP(X)$ is sequentially compact, Theorem~\ref{thm:main} will follow from
the following theorem.

\begin{thm}\label{thm:main1}  Let $R_k\in \RR^{>0}$ be an unbounded
  increasing sequence such that the limit $\lim_{k\to \infty} \mu^\gamma_{R_k}$, call it $\mu$,
  exists. Then
  $\mu=\mu_G$.
  \end{thm}

  We fix such a sequence $R_k$ and  $\mu=\lim_{k\to \infty} \mu^\gamma_{R_k}$
  We need to show that $\mu$ is $G$-invariant.

  \medskip

Let $p(x)\in S_G(\RR)$ be the type of $\gamma(t)$ ``at infinity'',
namely the complete type determined by  $x\in [\gamma]$ and all formulas $\gamma^{-1}(x)>n$ for  $n\in \mathbb N$.
Since  $\gamma$ is dense in $G$  mod $\Gamma$, the curve $\gamma$ is
unbounded in $G$ (otherwise, by o-minimality,  $\cl([\gamma])$ would be a compact set
and $\cl([\gamma]\cdot \Gamma)=(\cl[\gamma])\cdot \Gamma$ would be a
closed subset of $G$ with empty interior).
By Fact \ref{mustab-thm}, $\Stab^{\mu}(p)$ is a definable subgroup of
$G$ of positive dimension. In fact it has dimension one, but all we
need is that it contains a definable subgroup
$P$ of dimension one.

We first show that the measure $\mu$ is $P$-invariant.

\subsection{The work of Poulios and Shah}
 Our strategy is to apply methods developed by Georgious Poulios in
 his PhD thesis, \cite{Poulios}, and then follow the strategy of
 Nimish~A.~Shah (see \cite[Proposition 4.1]{Shah} for an analogous result for polynomial curves in a more general setting).

Since $\gamma$ is unbounded, we have $\lim_{t\to \infty} |\gamma(t)|=\infty$.
 Since we assume that $\Rom$ is polynomially bounded, by \cite{Miller},
 there is some nonzero $d\in \mathbb R$, and $r>0$ such that $|\gamma(t)|\sim dt^r$, namely $|\gamma(t)|/dt^r\to 1$.

 We now return to the group $P$. It is a $1$-dimensional subgroup of a unipotent group
 $G$, therefore it can be written as an isomorphic image under a
 polynomial map of   a $1$-dimensional additive subgroup of the Lie algebra
 $\mathfrak g$. Thus, $P$ is definably isomorphic to $(\mathbb R,+)$.

   It follows from o-minimality (see \cite{PL}) that for every
   $g\in P$ there is a definable unbounded $h_g:(0,\infty)\to \mathbb
   R$ such that $\lim_{t\to \infty} \gamma(h_g(t))\gamma(t)^{-1}=g$.
   Since $\Rom$ is polynomially bounded, each such $h_g$ has a
   corresponding $e=e(g)\in \mathbb R^{>0}$ and nonzero $c=c(g)\in
   \RR$ such that $h_g(t)\sim ct^e$.

 \begin{claim} For every $g\in P$,

 \begin{enumerate}
 \item $e(g)=1$, namely $h_g(t)\sim ct$.
 \item $c(g)=1$.
 \end{enumerate}

 \end{claim}
 \proof (1) (see also \cite[Lemma 3.2.1]{Poulios}) We have
 $|\gamma(h_g(t))|\sim cdt^{re}$, so if $e\neq 1$ then
 $\gamma(h_g(t))\gamma(t)^{-1}$ is unbounded. Indeed assume $e >1$.
If  $\gamma(h_g(t))\gamma(t)^{-1}$ is bounded then for $M(t)=
\gamma(h_g(t))\gamma(t)^{-1}$ we would have that $|M(t)|\in O(1)$
and then, since $\gamma(h_g(t))=M(t)\gamma(t)$, we
would have $|\gamma(h_g(t))| =O(|\gamma(t)|)$, in contradiction to
 $|\gamma(h_g(t))|\sim cdt^{re}$.  If $e<1$ then using boundedness of
$\gamma(t)\gamma(h_g(t))^{-1}$ we would derive the same contradiction.
Thus $h_g(t)=ct+o(t)$,

 For (2), assume that the limit of $\gamma(ct+o(t))\gamma(t)^{-1}$
 exists (so belongs to $P$) for some $c\neq 1$.
By \cite[Lemma 3.2.2]{Poulios}, for every $a\in \mathbb R^{>0}$, the
limit $\gamma(at)\gamma(t)^{-1}$ exists in $P$, call it $P(a)$, and
moreover, the function $a\mapsto P(a)$ is an isomorphism of $(\mathbb
R^{>0},\cdot)$ and $P$. As noted above, this yields a definable isomorphism of
 $(\mathbb R,+)$ and $\la \mathbb R^{>0},\cdot\ra$, contradicting the fact that $\CR$ is polynomially bounded.\qed

We can therefore write every  $h_g$ as $t+a(g)t^{r_0}+o(t^{r_0})$
for some $r_0<1$ (a-priori, possibly depending on $g$). We now apply \cite[Lemma 3.2.3]{Poulios}, and conclude that
 every element of $P$ can be attained as the limit $\rho(c)=\lim_{t\to \infty}\gamma(T_{r_0,c}(t))\gamma(t)^{-1}$,  where
 $$T_{r_0,c}=(t^{1-r_0}+(1-r_0)c)^{\frac{1}{1-r_0}}=t+ct^{r_0}+o(t^{r_0}),$$
 for some $c\in \mathbb R$.
 Moreover, the function $c\mapsto \rho(c)$ is a group isomorphism of $(\mathbb R,+)$ and $P$.

We therefore reached the following situation: We have a definable group isomorphism $\rho:\mathbb R\to P$, a fixed $r_0<1$,
and for each $s\in \mathbb R$,
$$\rho(s)=\lim_{t\to \infty}\gamma(t+st^{r_0}+o(t^{r_0}))\gamma(t)^{-1}.$$

We now proceed exactly as in \cite[Section 4]{Shah}, with the following elementary lemma replacing \cite[Lemma 4.3]{Shah}:
\begin{lem} Let $\ell:(0,\infty) \to \mathbb R$ be a differentiable
  function such that $\lim_{t \to \infty}\ell'(t)=0$. Then, for any bounded continuous function $f:\mathbb R\to \mathbb R$,
$$\lim_{T\to \infty}\frac{1}{T}\int_1^T [f(t+\ell(t))-f(t)]dt=0.$$
\end{lem}

We conclude, exactly as in \cite[Proposition 4.1]{Shah},
\begin{prop}\label{prop:P-inv} The measure $\mu$ above is invariant under $P$.\end{prop}

\subsection{The proof of Theorem \ref{thm:main1}}

We proceed similarly to the proof of Proposition \ref{main-step}. Let $N$ be the algebraic normal closure
of $P$, namely the smallest normal algebraic subgroup  of $G$
containing $P$.  Let  $N^\Gamma$ be the $\Gamma$-rational closure of $N$, namely  the  minimal
$\Gamma$-rational subgroup of $G$ containing $N$. Thus $N^\Gamma$
is the smallest $\Gamma$-rational normal subgroup containing $P$.
Finally let $N_0=N^{\Gamma} \cap Z(G)$.  By the nilpotency of $G$, it is an algebraic
$\Gamma$-rational subgroup
of $G$  of positive dimension.

Let $G_1=G/N_0$, and $f\colon G\to G_1$ the quotient map.
Let $\Gamma_1=f(\Gamma)$,  $X_1=G_1/\Gamma_1$, and
$\pi_1\colon G_1\to X_1$  the natural  projection.  Let
$f'\colon X\to X_1$ be the map such that the following diagram is
commutative.

\[
  \begin{tikzcd}
    G \arrow[r, "f"] \arrow[d,"\pi"] & G_1 \arrow[d,"\pi_1"]\\
   X \arrow[r, "f' "]  & X_1
\end{tikzcd}
\]

Since $N_0$ is a central subgroup of $G$, $X_1$ can also be identified
with the quotient $G/N_0\Gamma$, and
the map $f'$ identifies  $X_1$ with the
quotient $N_0\backslash X$.

Let $\gamma_1(t)$ be the curve  $(f\co \gamma)(t)$. It is not hard to see that it is dense in
$G_1$ mod $\Gamma_1$.  Hence, by the induction hypothesis, it is
c.u.d. in $G_1$ mod $\Gamma_1$,
namely, the  measure $\lim_{R\to\infty}\mu_R^{\gamma_1}$
is  the unique $G$-invariant probability  measure on $X_1$.

It is not hard to see that each measure $\mu_R^{\gamma_1}$ is the
pushforward of $\mu_R^{\gamma}$ along $f'$ and
$\mu_1$ is the pushforward of  $\mu$ along  $f'$.

Thus we have established the following claim.
\begin{claim}\label{claim:claim-push}
  Let $X_1= N_0\backslash X$ and $f'\colon X\to X_1$ be the quotient map.
Then the pushforward  of $\mu$ along $f'$ is the unique $G$-invariant
probability Borel measure on $X_1$.
\end{claim}

Our next goal is to show that $\mu$ is $N_0$ invariant.
\newcommand{\CH}{\mathcal H}

As in \cite{Shah}, let $\CH$ be the set of all $\Gamma$-rational
subgroups of $G$, and for $H\in \CH$, let
\[ N(H,P) = \{ g\in G \colon g^{-1}Pg\subseteq H \}. \]

\begin{prop}\label{prop:NHP} For every $H\in
\CH$,
  \begin{enumerate}
  \item   $N(H,P)$ is closed and definable, in fact it is semi-algebraic.
  \item   $N(H,P)$ is $N_0$-invariant.
   \item  $N(H,P)$ is invariant under the action of $P$ on the
     left.
  \item   $N(H,P)$ contains an open subset of $G$ if and only if $H$
    contains $N^\Gamma$ if and only if $N(H,P)=G$.
  \end{enumerate}
\end{prop}
\begin{proof}
  $(1)$ is obvious. $(2)$ is trivial since $N_0$ is central.  $(3)$ is
  obvious and $(4)$
  follows from Lemma \ref{intersection}.
  \end{proof}

  Let
  \[ S(P)= \bigcup_{H\in \CH, H\subsetneqq
      N^\Gamma}  N(H,P), \]
  and
  \[ Y =N(N^\Gamma, P) - S(P) = G - S(P).
  \]

In other words $Y$ consists of all $g\in G$ such that  $N^\Gamma$ is
the smallest $\Gamma$-rational subgroup  containing  $g^{-1}Pg$.

Let
\[ T=\pi(Y). \]

Since the family of $\Gamma$-rational subgroups is closed under conjugation by
elements of $\Gamma$, and $N^\Gamma$ is normal,   the set    $S(P)$
is closed under multiplication by elements of $\Gamma$ on the right.
Hence

\[ T= X -\pi(  S(P) ). \]

Also, by Proposition~\ref{prop:NHP}, the set  $S(P)$  is invariant under
multiplication by $N_0$.

Recall that we have a commutative diagram

\[
  \begin{tikzcd}
    G \arrow[r, "f"] \arrow[d,"\pi"] & G_1=G/N_0 \arrow[d,"\pi_1"]\\
   X \arrow[r, "f' "]  & X_1
\end{tikzcd}
\]

Let $S_1=f(S(P))$.
\begin{prop}\label{prop:mus1}
$\mu_1(\pi_1(S_1))=0$.
\end{prop}
\begin{proof} We need to consider 2 cases.\\
  \noindent\textbf{Case 1: the group  $G_1$  is trivial.}
  In this case  $G=N_0$ is abelian, and it is not hard to see that the
  set $S$
  is empty, hence  the set $S_1$ is empty as well.

  \noindent\textbf{Case 2: the group $G_1$  is nontrivial.}
By Proposition~\ref{prop:NHP}, the set  $S$ is a countable
union of closed semi-algebraic nowhere dense $N_0$-invariant subsets
of $G$.

It follows then that $S_1$ is also a countable union of closed nowhere-dense
subsets of $G_1$. Since $S$ is $\Gamma$-invariant on the right, the set $S_1$ is
$\Gamma_1$-invariant, hence  $\mu_1(\pi_1(S_1))=0$.
\end{proof}

Since $\mu_1$ is the pushforward of $\mu$ along $f'$, it follows then
that $\mu(T)=1$ and $\mu(X- T)=0$.

By Proposition~\ref{prop:NHP}(3),  the set $T$ is closet under action
of $P$ on the left.

We now decompose $\mu$ into $P$-ergodic measures:
By the ergodic decomposition theorem, there is a probability
measure $\omega$ on the space $\CP(X)$
supported on  $P$-ergodic measures on $X$ so that for any Borel
$A\subseteq X$ we have
\[ \mu(A)= \int_{\CP(X)} \nu(A) \,d\omega(\nu).\]
Since $\mu$ is $0$ outside of $T$ and $T$ is $P$-invariant,
we need to consider only $P$-ergodic
measures $\nu$ with $\nu(T)=1$.

By Ratner's theorem (see \cite{Ratner}) for any such $\nu$ there is $g\in T$
such that $\nu$ is $gHg^{-1}$ invariant measure on the closed orbit
$gH\Gamma/\Gamma$, where $H$ is the smallest
$\Gamma$-rational subgroup containing  $g^{-1}Pg$.  Since $g\in T$, the smallest
$\Gamma$-rational subgroup containing  $g^{-1}Pg$ is $N^\Gamma$ that
is normal in $G$ and contains  $N_0$. Thus every such $\nu$ is
$N_0$-invariant, hence $\mu$ in $N_0$-invariant as well.

\medskip

We now have the following situation: the group $G$ acts transitively on a compact
space $X$ and   $N_0$ is a central subgroup of $G$ such that every orbit
$N_0.x$ is closed  in $X$.
The measure  $\mu$ is an $N_0$-invariant
probability Borel measure on $X$ whose push-forward along $f'$ is a
$G$-invariant measure on $X_1=N_0\setminus X$.

Since every nilpotent Lie group is unimodular (left and right Haar
measures are the same), it follows from  \cite[Proposition 1.3]{Furst}
that the measure $\mu$ is $G$-invariant. Thus we conclude that $\mu=\mu_G$.
This ends the proof of Theorem \ref{thm:main}
\qed

\begin{sample}
  The above theorem fails when $\Rom$ is not polynomially bounded, for the group $G=(\mathbb R,+)$.

  Let $\gamma(t)\colon \RR^{\geq 0}\to \RR$ be the curve $\ln(t+1)$,
  and $\Gamma$  the lattice $\Gamma=2\pi \ZZ$ in  $\RR$.
Obviously $\gamma(t)$ is definable in the o-minimal structure
$\RR_{\mathrm{exp}}$ and it is
dense in $\RR$ mod $\Gamma$.  However  it  is not c.u.d. mod
$\Gamma$. To see it,  let
$X=\RR/\Gamma$ and $\pi\colon \RR\to X$ be the quotient map. The function
$\sin(x)\colon \RR\to \RR$ is $\Gamma$-invariant, hence it   induces
a function on $X$, i.e. there is a continuous function  $f\colon X\to
\RR$ such that $ (f\circ \pi)(x)=\sin(x)$. Since
\[  \int \sin(\ln x)=\frac{x}{2}(\sin(\ln x) - \cos(\ln x))+C, \]
we have
  \begin{multline*}
  L_R^\gamma(f)=\frac{1}{R} \int_0^R f\circ \pi(\gamma(t)) dt
  =\frac{1}{R} \int_0^R \sin(\ln(t+1)) dt\\
  =\frac{R+1}{2R}\bigl(\sin(\ln(R+1))-\cos(\ln(R+1))\bigr) +\frac{1}{2R}
  \end{multline*}
that does not have limit as $R$ goes to $\infty$.
\end{sample}
\begin{rem}The failure of the equidistribution in the above example is due to the failure of  Proposition \ref{prop:P-inv}. Indeed, even when the limit  $\mu=\lim_{k\to \infty} \mu^\gamma_{R_k}$ exists for some unbounded sequence $R_k$, then  $\mu$ need not be $P$-invariant, or in this case $(\mathbb R,+)$-invariant.
\end{rem}

\bibliographystyle{acm}
\begin{bibdiv}
\begin{biblist}

\bib{nilpotent-book}{book}{
 author={Corwin, Lawrence J.},
   author={Greenleaf, Frederick P.},
   title={Representations of nilpotent Lie groups and their applications.
   Part I},
   series={Cambridge Studies in Advanced Mathematics},
   volume={18},
   note={Basic theory and examples},
   publisher={Cambridge University Press, Cambridge},
   date={1990},
}

\bib{omin}{book}{
      author={van~den Dries, Lou},
       title={{Tame topology and o-minimal structures}},
      series={London Mathematical Society Lecture Note Series},
   publisher={Cambridge University Press, Cambridge},
        date={1998},
      volume={248},
}

\bib{polbound}{article}{
   author={van den Dries, Lou},
   title={A generalization of the Tarski-Seidenberg theorem, and some
   nondefinability results},
   journal={Bull. Amer. Math. Soc. (N.S.)},
   volume={15},
   date={1986},
   number={2},
   pages={189--193},
}

\bib{lou-limit}{article}{
    author = {van den Dries, Lou},
    title = {Limit sets in o-minimal structues},
    booktitle = {{O-minimal Structures, Proceedings of the RAAG Summer School Lisbon 2003},
    {Lecture Notes in Real Algebraic and Analytic Geometry}. {Cuvillier}},
    year = {2005},
    publisher = {Verlag}
}

\bib{CM}{article}{
      author={van~den Dries, Lou},
      author={Miller, Chris},
       title={{Geometric categories and o-minimal structures}},
        date={1996},
     journal={Duke Mathematical Journal},
      volume={84},
      number={2},
       pages={497\ndash 540},
}

\bib{Furst}{article}{
   author={Furstenberg, Harry},
   title={The unique ergodicity of the horocycle flow},
   conference={
      title={Recent advances in topological dynamics (Proc. Conf., Yale
      Univ., New Haven, Conn., 1972; in honor of Gustav Arnold Hedlund)},
   },
   book={
      publisher={Springer, Berlin},
   },
   date={1973},
   pages={95--115. Lecture Notes in Math., Vol. 318},
}

\bib{cud}{book}{
   author={Kuipers, L.},
   author={Niederreiter, H.},
   title={Uniform distribution of sequences},
   note={Pure and Applied Mathematics},
   publisher={Wiley-Interscience [John Wiley \& Sons], New
   York-London-Sydney},
   date={1974},
   pages={xiv+390},
}

\bib{marker}{book}{
      author={Marker, David},
       title={{Model theory}},
      series={Graduate Texts in Mathematics},
   publisher={Springer-Verlag, New York},
        date={2002},
      volume={217},
        ISBN={0-387-98760-6},
        note={An introduction},
}

\bib{Leibman}{article}{
      author={Leibman, Emmanuel},
       title={{Rational sub-nilmanifolds of a compact nilmanifold}},
        date={2006},
     journal={Ergodic theory and dynamical systems},
         volume={26},
       pages={787\ndash 798},
}

\bib{Leibman1}{article}{
      author={Leibman, Emmanuel},
       title={{Pointwise convergence of ergodic averages for polynomial actions of $\ZZ^d$ by translations on a
       nilmanifold}},
        date={2005},
     journal={Ergodic theory and dynamical systems},
         volume={25},
       pages={215\ndash 225},
}

\bib{Lesigne}{article}{
     author={Lesigne, Emmanuel},
       title={{Sur une nil-vari\'{e}t\'{e}, les parties minimales associ\'{e}es \`{a} une translation sont uniquement
       ergodiques}},
        date={1991},
     journal={Ergodic theory and dynamical systems},
         volume={11},
       pages={379\ndash 391},
}

\bib{Miller}{article}{
   author={Miller, Chris},
   title={Exponentiation is hard to avoid},
   journal={Proc. Amer. Math. Soc.},
   volume={122},
   date={1994},
   number={1},
   pages={257--259},
}

\bib{Otero}{article}{
  title                    = {{A survey on groups definable in o-minimal structures}},
  Author                   = {Otero, Margarita},
  Booktitle                = {{Model Theory with Applications to Algebra and Analysis: Vol 2}},
  Publisher                = {Cambridge University Press},
  Year                     = {2008},
  Editor                   = {Chatzidakis, Zo{\'e} and Macpherson, Dugald and Wilkie, A. J.},
  Pages                    = {177--206}
}

\bib{mustab}{article}{
      author={Peterzil, Yaacov},
      author={Starchenko, Sergei},
       title={{Topological groups, $\mu$-types and their stabilizers }},
JOURNAL = {J. Eur. Math. Soc. (JEMS)},
  FJOURNAL = {Journal of the European Mathematical Society (JEMS)},
    VOLUME = {19},
      YEAR = {2017},
    NUMBER = {10},
     PAGES = {2965--2995},
}

\bib{o-minflows}{article}{
      author={Peterzil, Yaacov},
      author={Starchenko, Sergei},
     TITLE = {Algebraic and o-minimal flows on complex and real tori},
   JOURNAL = {Adv. Math.},
  FJOURNAL = {Advances in Mathematics},
    VOLUME = {333},
      YEAR = {2018},
     PAGES = {539--569},
}

\bib{PL}{article}{
   author={Peterzil, Ya'acov},
   author={Steinhorn, Charles},
   title={Definable compactness and definable subgroups of o-minimal groups},
   journal={J. London Math. Soc. (2)},
   volume={59},
   date={1999},
   number={3},
   pages={769--786},
}

\bib{pillay}{article} {
    AUTHOR = {Pillay, Anand},
     TITLE = {On groups and fields definable in {$o$}-minimal structures},
   JOURNAL = {J. Pure Appl. Algebra},
  FJOURNAL = {Journal of Pure and Applied Algebra},
    VOLUME = {53},
      YEAR = {1988},
    NUMBER = {3},
     PAGES = {239--255},
}

\bib{Poulios}{article} {
    AUTHOR = {Poulios, Georgious},
     TITLE = {Peterzil-Steinhorn subgroups of real algebraic groups},
   JOURNAL = {Ph.D Thesis, University of Notre Dame},
      YEAR = {2013},
}

\bib{Ratner}{article}{
 author={Ratner, Marina},
   title={Strict measure rigidity for unipotent subgroups of solvable
   groups},
   journal={Invent. Math.},
   volume={101},
   date={1990},
   number={2},
   pages={449--482},
}

\bib{Shah}{article}{
     author={Shah, Nimish A.},
       title={{Limit distributions of polynomial trajectories on homogoneous spaces}},
        date={1994},
     journal={Duke Mathematical Journal},
         volume={75},
     number={3},
       pages={711\ndash 732},
}

\bib{corput}{book}{
author={Stein, Elias M.},
   title={{Harmonic analysis: real-variable methods, orthogonality, and
   oscillatory integrals}},
   series={Princeton Mathematical Series},
   volume={43},
   note={With the assistance of Timothy S. Murphy;
   Monographs in Harmonic Analysis, III},
   publisher={Princeton University Press, Princeton, NJ},
   date={1993},
   pages={xiv+695},
 }

\bib{stroppel}{book}{
   author={Stroppel, Markus},
   title={Locally compact groups},
   series={EMS Textbooks in Mathematics},
   publisher={European Mathematical Society (EMS), Z\"urich},
   date={2006},
 pages={x+302},
 }

\bib{strz}{article}{
    AUTHOR = {Strzebonski, Adam W.},
     TITLE = {Euler characteristic in semialgebraic and other {${\rm
              o}$}-minimal groups},
   JOURNAL = {J. Pure Appl. Algebra},
  FJOURNAL = {Journal of Pure and Applied Algebra},
    VOLUME = {96},
      YEAR = {1994},
    NUMBER = {2},
     PAGES = {173--201},
}

\bib{UY}{article}{
      author={Ullmo, E},
      author={Yafaev, A},
 TITLE = {Algebraic flows on abelian varieties},
   JOURNAL = {J. Reine Angew. Math.},
  FJOURNAL = {Journal f\"{u}r die Reine und Angewandte Mathematik. [Crelle's
              Journal]},
    VOLUME = {741},
      YEAR = {2018},
     PAGES = {47--66},
}

\bib{flow}{article}{
      author={Ullmo, Emmanuel},
      author={Yafaev, Andrei},
       title={{O-minimal flows on abelian varieties}},
        date={2016-10},
     journal={arXiv.org},
         url={https://arxiv.org/pdf/1610.01488v1.pdf},
}

\bib{wilkie-equi}{article}{
   author={Wilkie, Alex},
   title={{Around functional transcendence in o-minimal structures}},
   date={June, 2018},
   journal={A talk in ``Around functional transcendence'', Oxford},
 }

\end{biblist}
\end{bibdiv}

\end{document}